\documentclass[a4paper,11pt]{amsart}

%To link the equations
\usepackage{hyperref}

\usepackage{amsmath,amsthm,amssymb,latexsym,amsfonts,wrapfig,yfonts,mathrsfs,mathtools}
\usepackage[latin1]{inputenc}
\usepackage[english,activeacute]{babel}
\usepackage{graphicx,xcolor,esint}
\usepackage[mathscr]{euscript}
\usepackage{geometry,upgreek,bbm} 
\usepackage[normalem]{ulem}
\usepackage{enumerate}
 \usepackage{mathrsfs}
 \usepackage{tikz}
 \usepackage{setspace}

\evensidemargin 0.0in \oddsidemargin 0.0in \textwidth 6.5in
\topmargin  -0.2in \textheight  9.5in \overfullrule = 0pt

%Para separar párrafos, manteniendo el indent
\usepackage{parskip}
\setlength{\parindent}{0.5cm}
%Para referencias
\usepackage{hyperref}
\usepackage{cleveref}

%Para las normas
\newcommand{\norm}[1]{\left\lVert#1\right\rVert}
%\allowdisplaybreaks

%Algunos simbolos habituales
\def \N{\mathbb N}
\def \Z{\mathbb Z}

\def \R{\mathbb R}
\def \C{\mathbb C}

\def \P{{\mathbb P}}
\def \pa{{\partial}}

\def \O{\mathcal{O}}
\def \T{\mathbb{T}}

\def \E{\mathbb{E}}

\newcommand{\Ac}{\mathcal A}
\newcommand{\Bc}{\mathcal B}
\newcommand{\Cc}{\mathcal C}

\newcommand{\Gc}{\mathcal{G}}
\newcommand{\Hc}{\mathcal H}
\newcommand{\Ic}{\mathcal I}
\newcommand {\Jc}{\mathcal{J}}

\renewcommand{\Mc}{\mathcal M}

\newcommand{\Rc}{\mathcal R}

\renewcommand{\bar}{\overline}

%Para que numere ecuaciones según la sección
\numberwithin{equation}{section}

%Theorems, numbered
\theoremstyle{plain}
\newtheorem{thm}{Theorem}[section]

\newtheorem{lem}[thm]{Lemma}
\newtheorem{prop}[thm]{Proposition}
\newtheorem{cor}[thm]{Corollary}
\newtheorem{rk}[thm]{Remark}
\newtheorem{claim}[thm]{Claim}
\theoremstyle{definition}

\theoremstyle{remark}

%Theorems, un-numbered
\theoremstyle{plain}

\theoremstyle{remark}

\title[Resonant LDP for beating NLS]{Resonant large deviations principle for the beating NLS equation}

\author[R. Grande]{Ricardo Grande}
\address{International School for Advanced Studies (SISSA), Via Bonomea 265, 34136, Trieste, Italy}
\email{rgrandei@sissa.it}

\begin{document}
	
\begin{abstract}
We prove a large deviations principle for the solution to the beating NLS equation on the torus with random initial data supported on two Fourier modes. When these modes have different initial variance, we prove that the resonant energy exchange between them increases the likelihood of extreme wave formation. Our results show that nonlinear focusing mechanisms can lead to tail fattening of the probability measure of the sup-norm of the solution to a nonlinear dispersive equation.
\end{abstract}

\maketitle

\section{Introduction}

Extreme waves are often defined to be isolated water waves whose amplitude exceeds twice the characteristic wave height expected for the given surface conditions \cite{DGV,Dysthe}. These waves are very difficult to predict, due in no small part to the many possible mechanisms of formation \cite{Onorato-mechanism}, which are the subject of much debate and strongly depend on the physical context (e.g. the depth of the water body). Two of the main mechanisms are believed to be linear superposition and nonlinear focusing. Linear superposition is a constructive interference phenomenon: the phases of many waves coincide at a certain point of space, giving rise to a large peak at that point. Conversely, the focusing mechanism is a nonlinear mechanism which consists of a substantial exchange of energy between waves of different wavenumbers resulting in a large increase in the amplitude of one of them.

In a recent series of works, Dematteis, Grafke, Onorato and Vanden-Eijnden proposed a novel statistical approach to the study of extreme waves \cite{OnoratoDGV,DGV,DGV2}. They considered two popular models\footnote{Such models are asymptotic approximations of the water waves system in the modulational regime and in infinite depth. While the well-posedness theory of the cubic NLS equation is well-understood, that of the Dysthe equation in $\T$ seems to be an open problem, in contrast to the $\R^2$ case which is better understood, cf.\ \cite{GKS,Saut}.}
for wave propagation and interaction on a bounded periodic domain: the Dysthe equation and the cubic NLS equation with random initial data. Their simulations suggest the existence of a typical profile for extreme waves, which can be formally and numerically predicted by using techniques from Large Deviations theory. This exciting combination of ideas uncovers a path towards identifying the most likely mechanism of formation of extreme waves, as well as the most likely profile of such waves. 

From a mathematical viewpoint, this problem is connected to the evolution of probability measures under nonlinear dynamics. The two main mechanisms of formation of extreme waves are believed to have different implications from a statistical viewpoint. Starting from sufficiently smooth Gaussian initial data, the linear superposition mechanism is not expected to substantially vary the statistics of the system. However, if the system is sufficiently energetic, the nonlinear focusing mechanism is conjectured (and empirically observed) to increase the likelihood of extreme waves, resulting in non-Gaussian fat-tailed statistics \cite{OnoratoDGV}. More precisely, it is believed that the initial Gaussian measure converges to a non-Gaussian invariant measure of the system, which features fat tails. 

While a rigorous proof of such strong conjectures is, for the moment, out of reach, the goal of this paper is to prove that focusing mechanisms can indeed lead to tail fattening of the probability measure in the case of a toy model. In particular, we will show the existence of an ``equilibrium'' for the tails of the probability measure, featuring fatter tails than the initial measure, which is reached after a transient regime where the tail probability grows away from the linear statistics. This description is thus consistent with the conjectured convergence to a supposed fat-tailed, invariant measure of the system which, in the case of our toy model, remains sub-Gaussian.

\subsection{Statement of results}

We consider the following NLS-type equation as a toy model for energy exchange between Fourier modes:
\begin{equation}\label{eq:beating}
\begin{cases}
i \pa_t u + \pa_x^2 u = 2\, \cos (2x)\, |u|^2 u, \quad x\in\T,\\
u(t,x)|_{t=0}= \varepsilon\,  ( \alpha\, e^{ix} + \beta\, e^{-ix}).
\end{cases}
\end{equation}
Here, $\alpha$ and $\beta$ are complex, independent Gaussian random variables with zero mean and variance
\begin{equation}
\sigma_{\alpha}^2=\E |\alpha|^2, \hspace{1cm} \sigma_{\beta}^2=\E |\beta|^2.
\end{equation}

Note that the mass and the energy are conserved quantities:
\begin{equation}\label{eq:conserved}
\Mc  = \int_{\T} |u(t,x)|^2 \, dx, \qquad 
\Hc   = \int_{\T} |\pa_x u(t,x)|^2 + 2\, \int_{\T} \cos (2x)\, |u|^4\, dx.
\end{equation}

One may use the conservation of the $L^2$-norm, together with a classical local well-posedness result by Bourgain in $X^{s,b}$ spaces \cite{Bourgain,Bourgainbook}, in order to show that \eqref{eq:beating} is globally well-posed for initial data in $L^2(\T)$. In particular, the sign of the nonlinearity plays no role, and we have thus chosen a positive sign. 

This equation, known as the beating NLS equation, was originally introduced by Gr\'ebert and Villegas-Blas \cite{beating}, who proved a beating effect in the deterministic setting: the dynamics are concentrated in the Fourier modes $u_1$ and $u_{-1}$, which exchange energy periodically. 

The main theorem of this paper shows that this resonant exchange of energy between Fourier modes results in tail fattening of the probability distribution of the solution.

\begin{thm}\label{thm:main_intro}
Fix $z_0>0$,  $\delta\in (0,1)$ and
\begin{equation}\label{eq:gamma_intro}
0\leq \gamma < \frac{5}{2}(1-\delta).
\end{equation}
Let $u_{\varepsilon}$ be the solution to \eqref{eq:beating} where $\alpha$ and $\beta$ are complex, independent Gaussian random variables with zero mean and variance $\E |\alpha|^2 = \sigma_{\alpha}^2 >  \sigma_{\beta}^2 = \E |\beta|^2$. Then for times $t=c\, \varepsilon^{-\gamma}$, $c>0$, the following large deviations principle holds:
\begin{equation}\label{eq:main_intro}
\lim_{\varepsilon\rightarrow 0} \ \varepsilon^{2\delta} \log \P \left( \sup_{x\in\T} |u_{\varepsilon} (t,x)| \geq z_0 \varepsilon^{1-\delta}\right) = \begin{cases}
-\dfrac{z_0^2}{\sigma_{\alpha}^2+\sigma_{\beta}^2}  &  \mbox{if}\quad \gamma< 2(1-\delta),\\
\vspace{-0.3cm}\\
-\dfrac{z_0^2}{2\sigma_{\alpha}^2} & \mbox{if}\quad 2(1-\delta)<\gamma< \frac{5}{2}(1-\delta).
\end{cases}
\end{equation}
\end{thm}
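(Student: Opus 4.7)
The strategy combines a standard Cram\'er LDP for the complex Gaussian vector $(\alpha,\beta)$ with a deterministic reduction of \eqref{eq:beating} to a resonant two-mode ODE and the contraction principle. Rescaling $\tilde\alpha=\varepsilon^\delta\alpha$, $\tilde\beta=\varepsilon^\delta\beta$, the pair $(\tilde\alpha,\tilde\beta)$ satisfies an LDP at speed $\varepsilon^{-2\delta}$ with rate function $I(\tilde\alpha,\tilde\beta)=|\tilde\alpha|^2/\sigma_\alpha^2+|\tilde\beta|^2/\sigma_\beta^2$, and the event $\{\sup_x|u_\varepsilon(t,x)|\ge z_0\varepsilon^{1-\delta}\}$ lives at exactly this scale. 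The heart of the argument is therefore to show that on this event $\varepsilon^{\delta-1}\sup_x|u_\varepsilon(t,x)|$ is asymptotically a continuous function of $(\tilde\alpha,\tilde\beta)$ (and of the scale-free phases $\arg\alpha,\arg\beta$) and to compute it in each of the two $\gamma$-regimes.

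\textbf{Step 1 (resonant normal form and ODE structure).} On the LDP event the data has amplitude $\varepsilon^{1-\delta}$, so the natural nonlinear timescale is $\varepsilon^{-2(1-\delta)}$. Following \cite{beating} I would carry out a Birkhoff-type resonant normal form to write
\begin{equation*}
u_\varepsilon(t,x)=A(t)e^{ix}+B(t)e^{-ix}+\rho_\varepsilon(t,x),\qquad \sup_{t\le c\varepsilon^{-\gamma}}\|\rho_\varepsilon(t,\cdot)\|_{L^\infty(\T)}=o(\varepsilon^{1-\delta}),
\end{equation*}
where $(A,B)$ solves the integrable Hamiltonian system
\begin{equation*}
i\dot A = (2|A|^2+|B|^2)B+A^2\bar B,\qquad i\dot B = (|A|^2+2|B|^2)A+B^2\bar A.
\end{equation*}
Conservation of the mass $N=|A|^2+|B|^2$ and of the reduced Hamiltonian $\sqrt{s(1-s)}\cos\psi$ (with $s=|A|^2/N$ and $\psi=\arg A-\arg B$) forces closed, periodic orbits along which $s$ oscillates between $\tfrac12\pm\sqrt{\tfrac14-h^2}$ with period $O(\varepsilon^{-2(1-\delta)})$. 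A direct computation then gives
\begin{equation*}
\sup_x|u_\varepsilon(t,x)|=|A(t)|+|B(t)|+o(\varepsilon^{1-\delta})=\sqrt{N}\,\bigl(\sqrt{s(t)}+\sqrt{1-s(t)}\bigr)+o(\varepsilon^{1-\delta}),
\end{equation*}
bounded above by $\sqrt{2N}$, with equality iff $s(t)=\tfrac12$. The upper endpoint $\gamma<\tfrac52(1-\delta)$ in the theorem is precisely the threshold at which the normal-form remainder can be kept strictly sub-leading on the LDP event.

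\textbf{Step 2 (matching in the two regimes).} If $\gamma<2(1-\delta)$, the time $t=c\varepsilon^{-\gamma}$ is much shorter than one beating period, so $s(t)=s_0+o(1)$ with $s_0=|\alpha|^2/(|\alpha|^2+|\beta|^2)$, and the sup reduces to $\varepsilon(|\alpha|+|\beta|)(1+o(1))$. The contraction principle with the constraint $|\tilde\alpha|+|\tilde\beta|\ge z_0$ and Lagrange multipliers yields
\begin{equation*}
\inf_{|\tilde\alpha|+|\tilde\beta|\ge z_0}\Bigl(\tfrac{|\tilde\alpha|^2}{\sigma_\alpha^2}+\tfrac{|\tilde\beta|^2}{\sigma_\beta^2}\Bigr)=\frac{z_0^2}{\sigma_\alpha^2+\sigma_\beta^2},
\end{equation*}
with optimum at $|\tilde\alpha|/\sigma_\alpha^2=|\tilde\beta|/\sigma_\beta^2$. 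If $2(1-\delta)<\gamma<\tfrac52(1-\delta)$, many beating periods fit into $[0,t]$, and the universal bound $|A|+|B|\le\sqrt{2N}$ immediately produces
\begin{equation*}
\P\bigl(\sup_x|u_\varepsilon|\ge z_0\varepsilon^{1-\delta}\bigr)\le \P\bigl(|\alpha|^2+|\beta|^2\ge z_0^2\varepsilon^{-2\delta}/2\bigr),
\end{equation*}
whose LDP-optimum sits at $|\tilde\beta|=0$ (since $\sigma_\alpha^2>\sigma_\beta^2$), giving the upper bound $z_0^2/(2\sigma_\alpha^2)$. For the matching lower bound I would choose $|\alpha|^2\approx z_0^2\varepsilon^{-2\delta}/2$ and $|\beta|^2=\eta\,\varepsilon^{-2\delta}$ with $\eta$ small (yet large enough that the beating period stays $\ll t$); since the phase $\arg\alpha-\arg\beta$ carries no $I$-cost and merely time-shifts the beating orbit, a positive-measure set of phases achieves $s(t)=\tfrac12$ at the prescribed time and saturates the bound.

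The principal difficulty is Step 1: a PDE-level resonant normal-form reduction that is uniform in the rescaled amplitude regime $|\alpha|,|\beta|\sim\varepsilon^{-\delta}$ and valid up to times $\varepsilon^{-\gamma}$ with $\gamma$ approaching $\tfrac52(1-\delta)$. This will likely require iterating the normal-form transformation of \cite{beating} to a higher order together with compatible well-posedness estimates for the remainder $\rho_\varepsilon$. A secondary, lower-order subtlety is quantifying the positive-measure phase set used in the Regime-2 lower bound so that its LDP cost is genuinely negligible.
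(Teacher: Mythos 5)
Your Step 1 and the sub-resonant and upper-bound parts of Step 2 follow essentially the same route as the paper: the normal form of Gr\'ebert--Villegas-Blas reduces the dynamics to the modes $\pm1$ up to times $\lesssim\varepsilon^{-\frac52(1-\delta)+\kappa}$ (\Cref{thm:approximation}), the sup-norm becomes $|u_1(t)|+|u_{-1}(t)|$ up to admissible errors, and the two elementary Gaussian LDPs for $|\alpha|+|\beta|$ and $\sqrt2\sqrt{|\alpha|^2+|\beta|^2}$ give both the regime $\gamma<2(1-\delta)$ and the upper bound $-z_0^2/(2\sigma_\alpha^2)$, exactly as in Sections 2--3 of the paper.

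The genuine gap is your lower bound in the regime $2(1-\delta)<\gamma<\frac52(1-\delta)$. You propose to take $|\alpha|^2\approx z_0^2\varepsilon^{-2\delta}/2$, $|\beta|^2=\eta\,\varepsilon^{-2\delta}$, and to use the relative phase, which carries no rate-function cost, to ``time-shift the beating orbit'' so that $s(t)=\tfrac12$ at the prescribed time. This mechanism does not exist here. For the effective two-mode system both $J_1$ and $K_1$ are conserved, the evolution of the moduli is explicitly solvable, and the beating argument is $2\varepsilon^2t(|\alpha|^2+|\beta|^2)$ (\Cref{thm:approximation}): it depends on the moduli alone. The relative phase enters only through $\Re(\alpha\bar\beta)$, an overall phase factor invisible in $|u_1|,|u_{-1}|$, and through cross terms bounded by $|\alpha||\beta|$; in your configuration it can therefore displace the oscillation of $s$ by at most $O(\sqrt\eta)$ of a period, not by the $O(1)$ amount generically needed to place $s(t)$ at $\tfrac12$ at a prescribed super-resonant time. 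To realize the event at near-optimal cost one must instead place the moduli themselves in resonance windows where $2\varepsilon^2t(|\alpha|^2+|\beta|^2)$ is close to $\tfrac\pi4$ mod $\tfrac\pi2$, i.e.\ thin intervals in $a=\varepsilon^{\delta}|\alpha|$ near $z_0/\sqrt2$ whose widths shrink like $\tau^{-1}$ with $\tau=t\,\varepsilon^{2(1-\delta)}\to\infty$, and one must prove that their Gaussian mass is only polynomially small in $\varepsilon$, uniformly as consecutive windows collapse. That is the actual core of the paper: the reduction to the implicit equation $a\,(|\cos(2\tau a^2)|+|\sin(2\tau a^2)|)=z_0$ (\Cref{thm:inclusion}), the analysis of its ordered solution branches, their collisions and the convergence of the minimal branch to $z_0/\sqrt2$ (\Cref{thm:Y_inf_limit}), and the key lower bound on the total length of two consecutive solution intervals, $\gtrsim\lambda^{-3}\tau^{-2}\gtrsim\varepsilon$ (\Cref{thm:asymp_increment}), which keeps the prefactor of the Gaussian integral polynomial and uses the restriction \eqref{eq:gamma_intro} a second time, beyond the validity of the normal form. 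You flag this quantification as a ``secondary, lower-order subtlety,'' but it is the main missing step, and the degree of freedom you allocate to it (the phases) cannot supply it.
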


\begin{rk}\label{rk:transient} Note that the transient regime $t=\tau\, \varepsilon^{-2(1-\delta)}$, $\tau\in (0,\infty)$, is not covered by \Cref{thm:main_intro}. In \Cref{thm:resonantLDP_diff_variance}, we prove that:
\begin{equation}\label{eq:transient_intro}
\begin{split}
\limsup_{\varepsilon\rightarrow 0} \ \varepsilon^{2\delta} \log \P \left( \sup_{x\in\T} |u_{\varepsilon}(t,x)| \geq z_0 \varepsilon^{1-\delta}\right) & \leq -\frac{z_0^2}{2\sigma_{\alpha}^2}\\
\liminf_{\varepsilon\rightarrow 0} \ \varepsilon^{2\delta} \log \P \left( \sup_{x\in\T} |u_{\varepsilon}(t,x)| \geq z_0 \varepsilon^{1-\delta}\right) & \geq -\frac{\Jc (\tau,z_0)^2}{\sigma_{\alpha}^2},
\end{split}
\end{equation}
where $\Jc$ is a c\`adl\`ag function which tends to $z_0/\sqrt{2}$ as $\tau\rightarrow\infty$, cf.\ \eqref{eq:implicit_J}-\eqref{eq:asymp_jumps}. In general, the upper and lower bounds in \eqref{eq:transient_intro} do not coincide, except as $\tau\rightarrow\infty$, which is a key ingredient of the proof of \Cref{thm:main_intro}. However, we believe that the techniques presented in Sections \ref{sec:implicit} and \ref{sec:Laplace} could be adapted to produce a LDP in this transient regime.
\end{rk}

\begin{rk} When the variance of the initial Fourier modes is equal, $\sigma_{\alpha}^2 =\sigma_{\beta}^2=\sigma^2$, the rate functions on the right-hand side of \eqref{eq:main_intro} coincide, suggesting that one may not see the effect of energy exchanges from a statistical viewpoint. We give a rigorous proof of this edge case in \Cref{thm:main_equal_var}, which extends \Cref{thm:main_intro} to the equal variance case:
\begin{equation}\label{eq:main_intro_samevar}
\lim_{\varepsilon\rightarrow 0} \ \varepsilon^{2\delta} \log \P \left( \sup_{x\in\T} |u_{\varepsilon}(t,x)| \geq z_0 \varepsilon^{1-\delta}\right) =-\frac{z_0^2}{2\sigma^2}  \qquad \mbox{for}\ t=c\, \varepsilon^{-\gamma},\ 0\leq \gamma< \frac{5}{2}(1-\delta).
\end{equation}
\end{rk}

\begin{rk} \Cref{thm:main_intro} allows $t=t(\varepsilon)$ to depend on $\varepsilon$ in more complex ways beyond a polynomial $c\varepsilon^{-\gamma}$. In particular, one can replace the condition $\gamma< 2(1-\delta)$ (resp. $>2(1-\delta)$) by $\lim_{\varepsilon\rightarrow 0^{+}} t(\varepsilon)\, \varepsilon^{2(1-\delta)} = 0$ (resp. $=\infty$). A precise statement of these sharp results can be found in \Cref{thm:LDP_diff_linear} and \Cref{thm:super_resonantLDP_diff_variance}.
\end{rk}

\begin{rk}\label{rk:longer_gamma} The restriction \eqref{eq:gamma_intro} stems from a normal form result which allows us to approximate the effective dynamics of \eqref{eq:beating} by a simpler Hamiltonian system under the restriction \eqref{eq:gamma_intro}, cf.\ \Cref{sec:normal_form}. This restriction may be removed if one instead considers the evolution of the $2$-dimensional Hamiltonian system given by
\begin{equation}\label{eq:2DHam_intro}
\begin{split}
\begin{cases}
\pa_t u_1 = -i\, \dfrac{\pa \Gc}{\pa \bar{u}_1}, \qquad 
\pa_t u_{-1}  = -i\, \dfrac{\pa \Gc}{\pa \bar{u}_{-1}},\\
u_1(0) = \varepsilon\, \alpha, \hspace{1.35cm} u_{-1}(0)  =  \varepsilon\, \beta.
\end{cases}
\end{split} 
\end{equation}
with 
\[
\Gc = |u_1|^2 + |u_{-1}|^2 + 2(u_1 \overline{u_{-1}} + u_{-1} \overline{u_1})\, (|u_1|^2 + |u_{-1}|^2).
\]
Our techniques allow us to prove the LDP \eqref{eq:main_intro} for the solution $u(t,x)=u_1(t)e^{ix} + u_{-1}(t) e^{-ix}$ to \eqref{eq:2DHam_intro} for $t=c\varepsilon^{-\gamma}$, for any $c>0$ and any $0\leq \gamma <\infty$.
\end{rk}

\subsection{Some background literature}

As previously mentioned, the statistical study of extreme waves is connected to the evolution of probability measures under nonlinear dynamics. 
While much attention has been devoted to the existence and construction of invariant measures for dispersive PDEs, the problem of the out-of-equilibrium evolution of measures under nonlinear, dispersive dynamics is comparatively less explored.

Considerable progress on this topic has recently been achieved in the context of wave turbulence theory, where one is interested in the evolution of the second order moment of the solution to dispersive equations, see for instance \cite{DengHani2,DengHani} and \cite{GrandeHani,HRST} in the random data and stochastic settings, respectively.
On the other hand, there have been recent advances on certain qualitative properties of the evolution of probability measures under Hamiltonian PDEs, such as their absolute continuity with respect to the initial Gaussian measure \cite{Tzvetkov}. These techniques can yield quantitative bounds on the Radon-Nikodym derivative, such as integrability in various $L^p$ spaces, which can be used to give upper bounds on the tails of the probability measure. Unfortunately, such bounds are not (yet) sharp enough to obtain large deviations principles (LDP), which is the subject of this manuscript.

In a previous work with M. Garrido, K. Kurianski and G. Staffilani \cite{GGKS}, we proved a large deviations principle for the cubic NLS equation on $\T$ in the weakly nonlinear setting, which features a constant in time rate function. More precisely, the fact that the moduli of the Fourier coefficients are almost invariant over long timescales, which is connected to the complete integrability of the system, implies that nonlinear interactions are mostly restricted to the phases of the Fourier coefficients. It follows from these results that, from a statistical viewpoint, linear superposition is the main driving mechanism of extreme waves in this context.

The current manuscript deals with the opposite setting from \cite{GGKS}, namely an equation where the Fourier coefficients of the solution exchange large amounts of energy. These exchanges of energy can create growth of the moduli of the Fourier coefficients, which becomes the main driving mechanism of extreme wave formation. To the best of our knowledge, this is the first rigorous mathematical result where the focusing mechanism is proved to increase the likelihood of appearance of extreme waves. It is interesting to mention that the tails in our result grow while remaining sub-Gaussian (i.e. the right-hand side of \eqref{eq:main_intro} depends on $z_0$ quadratically). Proving convergence to non-Gaussian fat-tailed probability measure starting from an initial Gaussian measure under \emph{any} nonlinear dispersive PDE remains a very interesting open question.

\subsection{Strategy of proof}

The results presented in \Cref{thm:main_intro} are based on three main ideas: 

\subsubsection{Approximation of the dynamics}

Putting the Hamiltonian \eqref{eq:conserved} in normal form up to order $4$ yields an explicit approximation to the solution $u_{\varepsilon}$ to \eqref{eq:beating} in terms of the random variables in the initial data. In particular, this yields an explicit formula for the $L^{\infty}$-norm of the solution in terms of the initial random variables:
\begin{equation}\label{eq:sup_intro}
\begin{split}
\sup_{x\in\T} |u_{\varepsilon}(t,x)| =&\ \varepsilon\, \sqrt{|\alpha|^2 \cos^2 \left( 2\varepsilon^2 t\, (|\alpha|^2+ |\beta|^2)\right) + |\beta|^2  \sin^2 \left( 2\varepsilon^2 t\, (|\alpha|^2+ |\beta|^2)\right) } \\
& + \varepsilon\, \sqrt{|\beta|^2 \cos^2 \left( 2\varepsilon^2 t\, (|\alpha|^2+ |\beta|^2)\right) + |\alpha|^2  \sin^2 \left( 2\varepsilon^2 t\, (|\alpha|^2+ |\beta|^2)\right)} + \O( \varepsilon^{\frac{3}{2}(1-\delta)})
\end{split}
\end{equation}
which is valid on a set of large probability\footnote{Whose complement is negligible with respect to the large deviations principle in \Cref{thm:main_intro}.}, up to timescales $t=c\varepsilon^{-\gamma}$ for any $c>0$ and for $\gamma$ in \eqref{eq:gamma_intro}.

\subsubsection{Sharp bounds for Gaussian integrals}

After a simple rescaling, one may use \eqref{eq:sup_intro} to obtain an approximation to the tail-probability of the $L^{\infty}$-norm of the solution to \eqref{eq:beating},
\begin{equation}\label{eq:Gaussint_intro}
\P \left( \sup_{x\in\T} |u_{\varepsilon} (t,x)| \geq z_0 \varepsilon^{1-\delta}\right)  \approx  \int_{\Ac (\tau,z_0)}  \frac{4\varepsilon^{-4\delta}\, a\, b}{\sigma_{\alpha}^2\, \sigma_{\beta}^2}  \exp \left( - \varepsilon^{-2\delta}\, \frac{a^2}{\sigma_{\alpha}^2}  - \varepsilon^{-2\delta}\, \frac{b^2}{\sigma_{\beta}^2} \right) \, da \, db
\end{equation}
where $\tau=t \varepsilon^{2(1-\delta)}$ and
\begin{multline}\label{eq:A_set_intro}
\Ac(\tau,z_0) = \left\lbrace (a,b)\in [0, c]^2 \ \Big | \   \sqrt{a^2 \cos^2 (2\tau \,[a^2 + b^2]) + b^2  \sin^2 (2 \tau \,[a^2 +b^2]) } \right. \\
\left. + \sqrt{b^2 \cos^2 (2\varepsilon^2 \tau \,[a^2 + b^2]) + a^2 \sin^2 (2 \tau \,[a^2 + b^2]) } \geq z_0 \right\rbrace ,
\end{multline}
where $c=c(z_0)>10z_0$ is sufficiently large, see \Cref{sec:Laplace} for details.

We note that when $\tau \ll 1$, the set $\Ac(\tau,z_0)$ can be approximated by $\{(a,b)\mid |a|+|b|\geq z_0\}$ which allows an explicit integration of the right-hand side of \eqref{eq:Gaussint_intro}. Note also that $\Ac(\tau,z_0)\subset \{ (a,b)\mid \sqrt{2}\, \sqrt{|a|^2 + |b|^2} \geq z_0\}$, which leads to a simple upper bound of \eqref{eq:Gaussint_intro} that happens to be sharp as $\tau\rightarrow\infty$. 

Obtaining sharp lower bounds for \eqref{eq:Gaussint_intro} constitutes the main novelty in this work. Despite the relative simplicity of the integrand, the time-dependent domain $\Ac(\tau,z_0)$ is quite intricate and generally non-convex, and is thus not amenable to a classical Laplace-type approximation. To overcome these difficulties, we show that $\Ac(\tau,z_0)$ approximately contains a set of the form 
\begin{equation}\label{eq:intregion_intro}
\{ a\mid F(\tau,a,z_0)\geq 0\}\times [0,\varepsilon]_b, 
\end{equation}
where
\begin{equation}\label{eq:implicit_intro}
F(\tau, a, z_0)= a\, \left( |\cos\left(2\tau\, a^2\right)| + |\sin\left(2\tau \,a^2\right)|\right) - z_0.
\end{equation}

Classical Laplace approximation arguments suggest that the integral \eqref{eq:Gaussint_intro} should be dominated by the minimal solution $a$ to the equation $F(\tau,a,z_0)=0$.

\subsubsection{Analysis of the implicit equation}

As a consequence of the analysis mentioned above, the zeroes of the function $F$ defined in \eqref{eq:implicit_intro} are of fundamental importance. In \Cref{sec:implicit}, we show that, for each fixed $(\tau,z_0)$, there are a large but finite number of solutions $a (\tau,z_0)$ to the implicit equation $F(\tau,a,z_0)=0$, which can be ordered from smallest to largest. For fixed $(\tau,z_0)$ it is thus possible to define the minimal solution $Y (\tau,z_0)$.

Unfortunately, each solution $a(\tau,z_0)$ ceases to exist due to $\pa_{\tau} a$ blowing up at a certain time $\tau^{\infty}(z_0)$, which admits an explicit asymptotic development, c.f.\ \eqref{eq:asymp_jumps}. These ``blow-up'' times, which are roughly in arithmetic sequence, constitute points in which the minimal solution collides with the second smallest solution. They are therefore jump points of the minimal solution $Y (\tau,z_0)$. 

At points of continuity of $Y (\tau,z_0)$, one may exploit these results to give a lower bound to \eqref{eq:Gaussint_intro} of the form 
\begin{equation}\label{eq:Gaussint_approx_intro}
\P \left( \sup_{x\in\T} |u_{\varepsilon} (t,x)| \geq z_0 \varepsilon^{1-\delta}\right) \geq C(\tau,z_0)\,  \varepsilon^2\, \exp \left( - \varepsilon^{-2\delta}\, \frac{Y(\tau,z_0)^2}{\sigma_{\alpha}^2}\right).
\end{equation}
The constant $C(\tau,z_0)$ depends on the distance from the minimal solution $Y(\tau,z_0)$ to the next smallest solution to $F(\tau,a,z_0)=0$. In particular, this constant vanishes at the jump points $\tau^{\infty}(z_0)$ of $Y(\tau,z_0)$. This small obstacle can be overcome by exploiting the third and fourth smallest solutions to $F(\tau,a,z_0)=0$, instead of the two smallest ones. In particular, this approach gives rise to the results presented in \Cref{rk:transient} which cover the transient regime $t=\tau\varepsilon^{-2(1-\delta)}$ with $\tau\in (0,\infty)$.

Unfortunately, these arguments are insufficient to handle the regime $t\gg \varepsilon^{-2(1-\delta)}$, as the distance between any two consecutive solutions to $F(\tau,a,z_0)=0$ shrinks to zero as $\tau\rightarrow\infty$.
We overcome this difficulty by simultaneously considering the four smallest solutions, $a_1,\ldots ,a_4$, to $F(\tau,a,z_0)=0$, exploiting the following inclusion in the region of integration \eqref{eq:intregion_intro}
\[
\left( [a_1(\tau,z_0), a_2(\tau,z_0)]\cup [a_3(\tau,z_0), a_4(\tau,z_0)] \right) \times [0,\varepsilon]_b \subset \{ a\mid F(\tau,a,z_0)\geq 0\}\times [0,\varepsilon]_b.
\]
More precisely, we show that, as $\tau\rightarrow\infty$, both $a_1$ and $a_3$ tend\footnote{Strictly speaking, these solutions do not exist globally in $\tau$. However, we show that they are close to $z_0/\sqrt{2}$ for all relevant times of existence.}
 to $z_0/\sqrt{2}$. Moreover, the sum of the distances between these solutions admits a uniform lower bound of the form:
\[
|a_2(\tau,z_0)-a_1(\tau,z_0)| + |a_4(\tau,z_0) - a_3(\tau,z_0)| \gtrsim_{z_0} \tau^{-2}\gtrsim \varepsilon,
\]
see \Cref{thm:asymp_increment} and \Cref{thm:Y_inf_limit} for the precise statements of these results. 

These ideas allow us to replace $C(\tau,z_0)$ by $\varepsilon$ and $Y(\tau,z_0)$ by $z_0/\sqrt{2}$ in \eqref{eq:Gaussint_approx_intro}, which ultimately leads to \Cref{thm:main_intro} in the regime $t\gg \varepsilon^{-2(1-\delta)}$.

\subsection{Outline} In \Cref{sec:normal_form}, we present some background on the beating NLS equation and its normal form, originally introduced in \cite{beating}, which allow us to approximate the nonlinear dynamics. In \Cref{sec:LDP}, we prove the main upper bounds for the tail probability in \Cref{thm:main_intro}, as well as sharp lower bounds for sub-resonant timescales. In \Cref{sec:implicit}, we conduct an in-depth study of the implicit equation \eqref{eq:implicit_intro}, including finding its minimal solution, deriving asymptotic developments in time, and giving lower bounds on the distance between consecutive solutions. Finally, in \Cref{sec:Laplace}, we prove several lower bounds on Gaussian integrals which include the proof of \Cref{thm:main_intro} for super-resonant timescales.

\subsection{Notation} 

We write $a\lesssim b$ to indicate an estimate of the form $a\leq c\, b$ for some constant $c>0$ which may change from line to line. The inequality $a\lesssim_d b$ indicates that the implicit constant $c$ depends on the parameter $d$. 
 For functions $f,g:[0,\infty)\rightarrow \R$ of a variable $\varepsilon>0$, we write $f(\varepsilon)\ll g(\varepsilon)$ whenever $\lim_{\varepsilon\rightarrow 0^{+}} f(\varepsilon)/g(\varepsilon) = 0$. We also employ the big $\O$ notation, i.e. $f(\varepsilon)=\O(g(\varepsilon))$ means that $f(\varepsilon)\lesssim g(\varepsilon)$. Similarly, $f(\varepsilon)= o(g(\varepsilon))$ means that $f(\varepsilon) \ll g(\varepsilon)$.

We denote by $\R_{+}=[0,\infty)$. For periodic functions $f: \T=[0,2\pi]\rightarrow \C$, we define $L^p (\T)$ , $1\leq p\leq \infty$, to be the space of Lebesgue-integrable functions such that 
\[
\norm{f}_{L^p}= \left( \int_0^{2\pi} |f(x)|^p \, dx \right)^{1/p} <\infty,
\]
with the standard modifications when $p=\infty$. We will often write functions in $f\in L^2(\T)$ in Fourier series
\[
f(x) = \sum_{k\in\Z} f_k \, e^{ikx}, \qquad f_k = \frac{1}{2\pi}\, \int_0^{2\pi} f(x) e^{-ikx}\, dx.
\]

Given a sequence of complex numbers $c = (c_k)_{k\in\Z}$, we say that $c\in \ell^p(\Z)$ if 
\[
\norm{c}_{\ell^p} = \left( \sum_{k\in\Z} |c_k|^p \right)^{1/p} <\infty.
\]

\subsection*{Acknowledgements}

The author would like to thank A. Maspero and N. Visciglia for some useful conversations.

\section{Normal form and approximate dynamics}\label{sec:normal_form}

Equation \eqref{eq:beating} is a Hamiltonian system with Hamiltonian
\begin{equation}
\Hc = \Hc_2 + \Hc_4= \sum_{k\in\Z} k^2 u_k v_k  + \sum_{k_1+k_2-k_3-k_4 = \pm 2} u_{k_1} u_{k_2} v_{k_3} v_{k_4}
\end{equation}
where $v_k = \overline{u_k}$, in such a way that the dynamics are given by $\pa_t u_k = \{ u_k, \Hc\}$ with the Poisson bracket:
\[
\{ F, G\} =-i\, \sum_{k\in\Z} \left( \frac{\pa F}{\pa u_k} \, \frac{\pa G}{\pa v_k} - \frac{\pa G}{\pa u_k} \, \frac{\pa F}{\pa v_k}\right).
\]

In particular, we may write:
\[
\pa_t \begin{pmatrix}
u_k \\ v_k 
\end{pmatrix}= -i\, X_{\Hc}(u,v)_k  := -i\, J\cdot \begin{pmatrix} \dfrac{\pa \Hc}{\pa u_k} \\ \vspace{-0.25cm} \\ \dfrac{\pa \Hc}{\pa v_k} \end{pmatrix}, 
\qquad J=\begin{pmatrix}
0 & 1 \\
-1 & 0
\end{pmatrix}.
\]

Since we only have two dominant modes, we may fix $\rho\geq 0$ and work in the following subspace of $\ell^1(\Z)$
\begin{equation}
Y_{\rho}  = \left\lbrace u : \T \rightarrow \C\  \Big | \ u(x) = \sum_{k\in\Z} u_k e^{ikx}, \quad \norm{u}_{Y_{\rho}} := \sum_{k\in\Z} e^{\rho |k|} |u_k| <\infty \right\rbrace .
\end{equation}
Note that this is a Banach space which enjoys the algebra property.

In order to study the dynamics of equation \eqref{eq:beating}, we exploit the following result by Gr\'ebert and Villegas-Blas \cite{beating}, which follows from the standard Birkhoff normal form procedure.

\begin{thm}\label{thm:normal_form}
Fix $\rho\geq 0$. There exist some $\varepsilon_0 (\rho)>0$ and a canonical change of variable $\Phi$ such that for all $\varepsilon\leq \varepsilon_0$, $\Phi$ maps the ball $B_{Y_{\rho}}(\varepsilon)$ into $B_{Y_{\rho}}(2\varepsilon)$. Moreover,
 \[
 \Hc \circ \Phi =\widehat{\Hc}= \Hc_2 +\widehat{\Hc}_{4,1} + \widehat{\Hc}_{4,2} + \widehat{\Hc}_{4,3} + \Rc_5
 \]
where
\begin{enumerate}[(i)]
\item $\Hc_2 = \sum_{k\in\Z} k^2 u_k v_k $.
\item $\widehat{\Hc}_4 = \widehat{\Hc}_{4,1} +\widehat{\Hc}_{4,2} + \widehat{\Hc}_{4,3}$ is a polynomial of order 4 satisfying $\{ \Hc_2 ,\widehat{\Hc}_4\}=0$.
\item $\widehat{\Hc}_{4,1} = 2(u_1 v_{-1} + u_{-1} v_1)\, (2 \sum_{k\neq \pm 1} u_k v_k + u_1v_1 + u_{-1}v_{-1})$.
\item $\widehat{\Hc}_{4,2} = 4 (u_2 u_{-1} v_{-2} v_1 + u_{-2} u_1 v_2 v_{-1})$.
\item $\widehat{\Hc}_{4,3}$ contains the terms in $\widehat{\Hc}_4$ involving at most one mode of index $1$ or $-1$.
\item $\Rc_5$ is the remainder of order 5 satisfying $\norm{X_{\Rc_5}(u)}_{Y_{\rho}} \lesssim \norm{u}_{Y_{\rho}}^4$.
\item $\norm{\Phi (u)-u}_{Y_{\rho}} \lesssim_{\rho} \norm{u}^2_{Y_{\rho}}$.
\end{enumerate}
\end{thm}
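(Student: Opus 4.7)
The plan is to run a one-step Birkhoff normal form reduction: construct $\Phi$ as the time-1 flow $\Phi_\chi^1$ of an auxiliary homogeneous Hamiltonian $\chi$ of degree $4$, chosen to solve the cohomological equation
\[
\{\Hc_2, \chi\} + \Hc_4^{\mathrm{nr}} = 0,
\]
where $\Hc_4^{\mathrm{nr}}$ collects the quartic monomials $u_{k_1}u_{k_2}v_{k_3}v_{k_4}$ of $\Hc_4$ with $k_1+k_2-k_3-k_4 = \pm 2$ and $\Omega(k) := k_1^2+k_2^2-k_3^2-k_4^2 \neq 0$. A direct computation gives
\[
\{\Hc_2,\, u_{k_1}u_{k_2}v_{k_3}v_{k_4}\} = i\,\Omega(k)\, u_{k_1}u_{k_2}v_{k_3}v_{k_4},
\]
so the equation is solved mode-by-mode by dividing each non-resonant coefficient by $-i\,\Omega(k)$. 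Crucially this is \emph{not} a genuine small-divisor problem: $\Omega(k)$ is an integer, so $|\Omega(k)| \geq 1$ on its support, and $\chi$ inherits the coefficient bounds of $\Hc_4$. Combined with the Banach algebra property of $Y_\rho$, this yields $\norm{X_\chi(u)}_{Y_\rho} \lesssim_\rho \norm{u}_{Y_\rho}^3$, since $\chi$ is degree $4$ and $X_\chi$ degree $3$.

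From this vector field bound, a routine bootstrap on the flow equation $\partial_s \Phi_\chi^s(u) = X_\chi(\Phi_\chi^s(u))$ shows that for $\varepsilon \leq \varepsilon_0(\rho)$ sufficiently small, $\Phi$ is well defined on $B_{Y_\rho}(\varepsilon)$, remains inside $B_{Y_\rho}(2\varepsilon)$, and satisfies property (vii). Applying the Duhamel identity twice to $t\mapsto \Hc\circ\Phi_\chi^t$ then yields the Lie series expansion
\[
\Hc \circ \Phi = \Hc + \{\Hc, \chi\} + \int_0^1 (1-s)\,\{\{\Hc, \chi\}, \chi\} \circ \Phi_\chi^s\, ds.
\]
Substituting $\Hc = \Hc_2 + \Hc_4$ and using the cohomological equation to cancel $\Hc_4^{\mathrm{nr}}$ produces
\[
\Hc \circ \Phi = \Hc_2 + \widehat{\Hc}_4 + \Rc_5, \qquad \widehat{\Hc}_4 := \Hc_4 - \Hc_4^{\mathrm{nr}},
\]
where $\Rc_5 = \{\Hc_4, \chi\} + \int_0^1 (1-s)\,\{\{\Hc,\chi\},\chi\}\circ\Phi_\chi^s\, ds$. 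Since $\Hc_4$ and $\chi$ are both degree $4$, every summand of $\Rc_5$ is a polynomial of degree at least $6$; the algebra property then gives the bound $\norm{X_{\Rc_5}(u)}_{Y_\rho}\lesssim \norm{u}_{Y_\rho}^4$ in (vi). Property (ii) is automatic, since $\widehat{\Hc}_4$ retains precisely the monomials that Poisson-commute with $\Hc_2$.

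The main obstacle is the explicit characterization of the resonant set and its decomposition into $\widehat{\Hc}_{4,1}, \widehat{\Hc}_{4,2}, \widehat{\Hc}_{4,3}$. One has to enumerate all integer tuples $(k_1,k_2,k_3,k_4)$ satisfying simultaneously $k_1+k_2 = k_3+k_4\pm 2$ and $k_1^2+k_2^2 = k_3^2+k_4^2$. Eliminating one variable reduces to a quadratic Diophantine condition that can be solved by a finite case analysis, yielding three families: (a) tuples with all entries in $\{\pm 1\}$, which after regrouping the symmetric monomials coming from the $\cos(2x)$-weighted nonlinearity produce the beating term $\widehat{\Hc}_{4,1}$ in (iii); (b) the quartets $(2,-1,-2,1)$ and $(-2,1,2,-1)$ together with their permutations, producing $\widehat{\Hc}_{4,2}$ in (iv); and (c) all remaining resonant tuples, which by construction involve at most one mode of index $\pm 1$ and are collected into $\widehat{\Hc}_{4,3}$ in (v). Tracking the combinatorial multiplicities induced by the two frequency shifts $\pm 2$ is the most delicate bookkeeping, but once completed it produces exactly the prefactors in (iii)--(v).
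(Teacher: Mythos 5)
The paper does not actually prove \Cref{thm:normal_form}; it cites it directly from Gr\'ebert and Villegas-Blas \cite{beating}, noting only that it "follows from the standard Birkhoff normal form procedure." Your outline reconstructs exactly that procedure, and most of it is correct: the sign computation $\{\Hc_2, u_{k_1}u_{k_2}v_{k_3}v_{k_4}\} = i\,\Omega(k)\,u_{k_1}u_{k_2}v_{k_3}v_{k_4}$ with $\Omega(k)=k_1^2+k_2^2-k_3^2-k_4^2$, the observation that $|\Omega(k)|\geq 1$ on the nonresonant set (no small divisors), the Lie-series expansion with the degree-$6$ remainder, the algebra property of $Y_\rho$ giving the vector-field bounds, and the bootstrap for $(vii)$ are all in order.

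There is, however, a genuine error in the classification of resonant monomials that you yourself flag as "the most delicate bookkeeping." Your case (a), namely "tuples with all entries in $\{\pm 1\}$," does not reproduce $\widehat{\Hc}_{4,1}$. Expanding $\widehat{\Hc}_{4,1} = 2(u_1 v_{-1}+u_{-1}v_1)\,(2\sum_{k\neq\pm 1} u_k v_k + u_1 v_1 + u_{-1}v_{-1})$ produces monomials of the form $u_1 v_{-1} u_k v_k$ and $u_{-1} v_1 u_k v_k$ for \emph{arbitrary} $k\neq\pm 1$; these arise from resonant tuples such as $(1,k,-1,k)$, which certainly do not have all entries in $\{\pm 1\}$. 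Under your trichotomy these tuples are excluded from case (a), are not the specific quartets of case (b), and so would fall into case (c); but then $\widehat{\Hc}_{4,3}$ would contain monomials with \emph{two} indices in $\{\pm 1\}$, contradicting item $(v)$ of the theorem. The correct split is by the number of factors with index in $\{\pm 1\}$: monomials with at least two such factors that factor through $u_1 v_{-1}$ or $u_{-1}v_1$ form $\widehat{\Hc}_{4,1}$; the remaining two-index-$\pm 1$ resonant quartets $(2,-1,-2,1)$ and $(-2,1,2,-1)$ and their permutations form $\widehat{\Hc}_{4,2}$; everything with at most one index in $\{\pm 1\}$ is $\widehat{\Hc}_{4,3}$. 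This is exactly what makes the corollaries in \Cref{sec:normal_form} work — the Poisson bracket $\{J_1,\widehat{\Hc}_{4,3}\}$ must genuinely involve at least three modes outside $\{\pm 1\}$, and that would fail if the $(1,k,-1,k)$ monomials leaked into $\widehat{\Hc}_{4,3}$.

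Separately, a minor parity remark: since $\Hc$ and $\chi$ both have only even-degree terms, the remainder $\Rc_5$ has degree $\geq 6$ and hence $X_{\Rc_5}$ degree $\geq 5$, so one actually gets $\norm{X_{\Rc_5}(u)}_{Y_\rho}\lesssim\norm{u}_{Y_\rho}^5$; the bound $\lesssim\norm{u}_{Y_\rho}^4$ stated in $(vi)$ is the weaker, and perfectly adequate, estimate.
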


This theorem will allow us to study the dynamics of the Hamiltonian system given by $\widehat{\Hc}$, i.e.
\begin{equation}\label{eq:phi_beating}
\begin{cases}
\pa_t u_k = \{ u_k , \widehat{\Hc}\} \qquad \forall k \in\Z,\\
|u_1(0)|, |u_{-1}(0)|< c\, \varepsilon^{1-\delta}, \quad u_k(0)=0 \quad k\in\Z-\{\pm 1\},
\end{cases}
\end{equation}
where $\delta\in (0,1)$ and $c>0$ will later be fixed and $\varepsilon>0$ is sufficiently small so that \Cref{thm:normal_form} applies.

Adapting the results in \cite{beating} to our setting, we will show that most of the dynamics of \eqref{eq:phi_beating} are restricted to the Fourier modes $\pm 1$ for long times. In order to prove this, we introduce the following quantities for $n\in\N$
\begin{equation}\label{eq:quantities}
\begin{split}
J_n & = u_n v_n+ u_{-n} v_{-n},\\
K_n& = u_n v_{-n} + u_{-n} v_n,\\
L_n & =  u_n v_n - u_{-n} v_{-n}.
\end{split}
\end{equation}
 Let us also denote by $M$ the mass $\sum_k u_k v_k$. 

The first consequence of \Cref{thm:normal_form} is the almost conservation of $J_n$ over long timescales.

\begin{cor}\label{thm:conserved_J} Fix a small $\kappa>0$, $\delta\in (0,1)$ and suppose that $J_1(0)\leq c\,\varepsilon^{2(1-\delta)}$ for some $c>0$. Then there exists some $\varepsilon_0 = \varepsilon_0 (c,\delta)$ such that for all $\varepsilon<\varepsilon_0$ the following holds. If $t\lesssim \varepsilon^{-\frac{5}{2}(1-\delta)+\kappa}$, then we have that the solution to \eqref{eq:phi_beating} satisfies
\begin{equation}\label{eq:no_energy}
\begin{split}
J_1 (t) & = J_1 (0) + \O (\varepsilon^{\frac{5}{2}(1-\delta)+\kappa}),\\
\sum_{n=2}^{\infty} J_n (t) & \lesssim \varepsilon^{3(1-\delta)} .
\end{split}
\end{equation}
\end{cor}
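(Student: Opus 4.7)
The plan is a bootstrap argument that exploits the structural properties of the normal form $\widehat{\Hc}$ together with mass conservation. The key observation is that $J_1$ Poisson-commutes with every piece of $\widehat{\Hc}$ except $\widehat{\Hc}_{4,3}$ and $\Rc_5$, so only these last two terms can transfer mass out of the modes $\pm 1$.

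I would first establish the identity
\[
\dot J_1 = \{J_1, \widehat{\Hc}_{4,3}\} + \{J_1, \Rc_5\}
\]
by computing three vanishing Poisson brackets. The bracket $\{J_1, \Hc_2\}$ vanishes because both quantities are polynomials in the actions $u_k v_k$. For $\widehat{\Hc}_{4,1}$, factor it as $2K_1(2M - J_1)$ with $K_1 = u_1 v_{-1} + u_{-1} v_1$: each monomial of $K_1$ contains exactly one $u_{\pm 1}$ and one $v_{\pm 1}$, hence carries vanishing net $\pm 1$-charge, while $M$ and $J_1$ are actions; consequently $\{J_1, K_1\} = 0$ by the product rule and $\{J_1, \widehat{\Hc}_{4,1}\} = 0$. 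For $\widehat{\Hc}_{4,2}$, the same balanced-charge argument applied to each of the two monomials $u_2 u_{-1} v_{-2} v_1$ and $u_{-2} u_1 v_2 v_{-1}$ gives $\{J_1, \widehat{\Hc}_{4,2}\} = 0$.

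Next I would use two a priori ingredients: \Cref{thm:normal_form} combined with conservation of $\widehat{\Hc}$ yields $\|u(t)\|_{Y_\rho} \lesssim \varepsilon^{1-\delta}$ throughout the time window of interest; and since $\Phi$ is canonical and the original flow preserves mass, the mass $M = \sum_k u_k v_k$ is also conserved by the normal-form flow, so that the assumption $u_k(0) = 0$ for $|k| \neq 1$ implies $\sum_{n \geq 2} J_n(t) = J_1(0) - J_1(t)$. I would then run a continuity argument with bootstrap assumption $\sum_{n \geq 2} J_n(t) \leq C \varepsilon^{3(1-\delta)}$ on a maximal interval $[0, T^*]$. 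Each nonvanishing monomial in $\{J_1, \widehat{\Hc}_{4,3}\}$ is a degree-four polynomial with exactly one factor from modes $\pm 1$ and three factors from modes $|k| \geq 2$ (since, by property (v), $\widehat{\Hc}_{4,3}$ contains at most one mode of index $\pm 1$ per monomial), whence
\[
|\{J_1, \widehat{\Hc}_{4,3}\}| \lesssim \varepsilon^{1-\delta} \Bigl(\sum_{|k| \geq 2} |u_k|^2\Bigr)^{3/2} \lesssim \varepsilon^{11(1-\delta)/2}.
\]
For the remainder, since $\nabla J_1$ is supported on modes $\pm 1$, property (vi) yields $|\{J_1, \Rc_5\}| \lesssim \varepsilon^{1-\delta} \|X_{\Rc_5}(u)\|_{Y_\rho} \lesssim \varepsilon^{5(1-\delta)}$. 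Integrating $\dot J_1$ over $t \lesssim \varepsilon^{-5(1-\delta)/2 + \kappa}$ delivers $|J_1(t) - J_1(0)| \lesssim \varepsilon^{5(1-\delta)/2 + \kappa}$, which is the first assertion.

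The main obstacle is closing the bootstrap at the sharper scale $\varepsilon^{3(1-\delta)}$ demanded by the second assertion: the naive Gronwall step above only delivers the coarser $\varepsilon^{5(1-\delta)/2 + \kappa}$ because the $\Rc_5$ contribution dominates. To overcome this, one exploits that the Hamiltonian of \eqref{eq:beating} is a polynomial with only even-degree monomials, a parity preserved by the Lie-transform step underlying \Cref{thm:normal_form}, so that $\Rc_5$ consists only of monomials of total degree at least six; this refines property (vi) to $\|X_{\Rc_5}(u)\|_{Y_\rho} \lesssim \|u\|_{Y_\rho}^5$, whence $|\{J_1, \Rc_5\}| \lesssim \varepsilon^{6(1-\delta)}$ and the integrated contribution becomes $\varepsilon^{7(1-\delta)/2 + \kappa} = o(\varepsilon^{3(1-\delta)})$. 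Combined with the already-sharp $\widehat{\Hc}_{4,3}$ estimate, this closes the bootstrap and yields the second assertion via mass conservation.
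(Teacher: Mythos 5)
Your approach is genuinely different from the paper's for the second assertion $\sum_{n\geq 2} J_n(t)\lesssim \varepsilon^{3(1-\delta)}$, and it has two gaps worth flagging.

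For the first assertion you proceed essentially as the paper does, computing the three vanishing brackets and bounding $\{J_1,\widehat{\Hc}_{4,3}\}$ and $\{J_1,\Rc_5\}$ under a bootstrap. The interesting divergence is in the second assertion. The paper does not invoke mass conservation at all: it bootstraps the energy $\Hc_2-J_1$, which dominates $\sum_{n\geq 2}J_n$, and the crucial step is the Cauchy--Schwarz refinement $|\{\Hc_2-J_1,\Rc_5\}|\lesssim (\Hc_2-J_1)^{1/2}\,\|X_{\Rc_5}\|_{Y_1}\lesssim \varepsilon^{4(1-\delta)}(\Hc_2-J_1)^{1/2}$. That extra square-root gain is precisely what lets the bootstrap close at the scale $\varepsilon^{3(1-\delta)+\kappa}$ using only the stated bound $\|X_{\Rc_5}\|\lesssim\|u\|^4$ from \Cref{thm:normal_form}. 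By contrast, your route needs a \emph{stronger} normal-form input, namely $\|X_{\Rc_5}\|\lesssim\|u\|^5$, to compensate for the longer time window.

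Two issues with your argument. First, the justification of mass conservation for the $\widehat{\Hc}$-flow is not valid as written: ``$\Phi$ canonical and the original flow preserves mass'' only shows that $M\circ\Phi^{-1}$ is preserved by the $\Hc$-flow, not that $M$ is preserved by the $\widehat{\Hc}$-flow. What you actually need is that $\Phi$ itself preserves $M$ (equivalently that $\widehat{\Hc}$ is gauge-invariant, $\{M,\widehat{\Hc}\}=0$), which holds because the generating Hamiltonian $\chi$ in the Birkhoff step is itself gauge-invariant; but that is a claim about the \emph{construction} of $\Phi$ and must be argued, not inherited from canonicity. Second, your fix via the parity of $\Rc_5$ is plausible (a Lie transform with a quartic generator applied to an even-degree Hamiltonian does produce a remainder of degree $\geq 6$), but it refines property \emph{(vi)} of \Cref{thm:normal_form} beyond what is stated there. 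Since that theorem is imported from \cite{beating}, you would need to reopen its proof to confirm the sharper bound; the paper's $\Hc_2-J_1$ argument deliberately avoids this dependence. There is also a smaller technical point: the bound $|\{J_1,\widehat{\Hc}_{4,3}\}|\lesssim \varepsilon^{1-\delta}\bigl(\sum_{|k|\geq 2}|u_k|^2\bigr)^{3/2}$ posits $\ell^2$ control of a sum over three free indices, whereas the natural estimate from the monomial structure gives an $\ell^1$-type quantity; a short argument using the $Y_\rho$ bound and the bootstrap pointwise bound (as the paper does via $p=\tfrac{3}{2}(1-\delta)$) is needed to justify the exponent you use.
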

\begin{proof}
For all $n\in\N$, one can verify that
\[
\{ J_n , \Hc_2 \} = \{ J_n , \widehat{\Hc}_{4,1} \} = \{ J_n , \widehat{\Hc}_{4,2}\} =0.
\]
By \Cref{thm:normal_form}, 
\[
\pa_t  J_n = \{ J_n ,  \widehat{\Hc}_{4,3} \} + \{ J_n , \Rc_5\}.
\]
We prove \eqref{eq:no_energy} by a bootstrap argument. In particular, we define $T$ to be the largest time for which the following holds:
\begin{equation}\label{eq:no_energy_alt}
\begin{split}
J_1 (t) & = J_1 (0) + \O (\varepsilon^{\frac{5}{2}(1-\delta)+\kappa}),\\
| J_n (t)| & \lesssim \varepsilon^{2p} \qquad \forall\ n \in \N - \{ 1\},
\end{split}
\end{equation}
where $p>0$ will be fixed later in order to close the bootstrap argument.

In the case of $n=1$, we have that $J_1 (0)\leq c \varepsilon^{2(1-\delta)}$ while $\widehat{\Hc}_{4,3}$ contains at least three Fourier modes which are not $\pm 1$. Therefore $\{ J_1 ,  \widehat{\Hc}_{4,3} \} = \O (\varepsilon^{1-\delta+3p})$. Similarly, $\{J_1, \Rc_5\}=\O(\varepsilon^{5(1-\delta)})$. As a result, for all times $t\leq T$,
\[
|J_1 (t) - J_1 (0)|\lesssim \varepsilon^{5(1-\delta)}\, t + \varepsilon^{1-\delta+3p}\, t .
% < \varepsilon^{2+\kappa} \Rightarrow t < \varepsilon^{-3+\kappa} , \varepsilon^{1-3p+\kappa}
\]

For $n> 1$, we have that $J_n(0)=0$. One may show that $\{ J_n ,  \widehat{\Hc}_{4,3} \} = \O (\varepsilon^{3p+1-\delta})$ and $\{ J_n , \Rc_5\}=\O (\varepsilon^{p+4(1-\delta)})$. Thus for all $n>1$ and $t\leq T$,
\[
|J_n (t)| \lesssim \varepsilon^{p+4(1-\delta)}\, t +  \varepsilon^{3p+1-\delta}\, t .
\]

One quickly finds that in order to close the bootstrap argument we may fix $p=\frac{3}{2}(1-\delta)$ and require $t\ll \varepsilon^{-\frac{5}{2}(1-\delta)}$. It follows that $T\geq \varepsilon^{-\frac{5}{2}(1-\delta)+\kappa}$, as we wanted to prove. An analogous bootstrap argument (with an additional sum in $k$) shows that the solution $\norm{u(t)}_{Y_{\rho}} \lesssim \varepsilon^{1-\delta}$ for times $t\lesssim \varepsilon^{-\frac{5}{2}(1-\delta)+\kappa}$.

Finally, note that $\sum_{n=2}^{\infty} J_n (t)\leq \Hc_2 - J_1$. By \Cref{thm:normal_form}, \emph{(ii)},
\begin{equation}\label{eq:energy_tail}
\pa_t (\Hc_2 - J_1) = -\{J_1, \hat{\Hc}_{4,3}\}+ \{ \Hc_2-J_1 , \Rc_5\}= \{ \Hc_2-J_1 , \Rc_5\}+ \O (\varepsilon^{\frac{11}{2}(1-\delta)}) .
\end{equation}
Moreover, the Cauchy-Schwartz inequality and \Cref{thm:normal_form}, \emph{(vi)}, yield
\begin{equation}\label{eq:bound_tail}
\begin{split}
|\{ \Hc_2-J_1 , \Rc_5\}| & = \sum_{|k|>1} \Big | k^2 v_k \frac{\pa \Rc_5}{\pa v_k} - k^2 u_k \frac{\pa \Rc_5}{\pa u_k} \Big |\\
&  \lesssim \left(\sum_{|k|>1} k^2 |u_k(t)|^2 \right)^{1/2} \left( \sum_{|k|>1} k^2  |X_{\Rc_5} (u,v)_k|^2\right)^{1/2}\\
& \lesssim (\Hc_2 - J_1)^{1/2}\, \norm{X_{\Rc_5} (u,v)}_{Y_1} \lesssim \varepsilon^{4(1-\delta)}\, (\Hc_2 - J_1)^{1/2}.
\end{split}
\end{equation}
A simple bootstrap argument using \eqref{eq:energy_tail} and \eqref{eq:bound_tail} yields the desired bound
\[
\sum_{n\geq 2} J_n (t) \leq \Hc_2(t) - J_1(t) = \O ( \varepsilon^{3(1-\delta)+\kappa}).
\]
\end{proof}

Next, we show that $K_n$ in \eqref{eq:quantities} is also conserved for long times:

\begin{cor}\label{thm:conserved_K} Fix a small $\kappa>0$, $\delta\in (0,1)$ and suppose that $J_1(0)\leq c\varepsilon^{2(1-\delta)}$ for some $c>0$. Then there exists some $\varepsilon_0 = \varepsilon_0 (c,\delta)$ such that for all $\varepsilon<\varepsilon_0$ the following holds. If $t\lesssim \varepsilon^{-\frac{5}{2}(1-\delta)+\kappa}$, then we have that the solution to \eqref{eq:phi_beating} satisfies
\[
K_1(t) =  K_1 (0) + \O (\varepsilon^{\frac{5}{2}(1-\delta)+\kappa}).
\]
\end{cor}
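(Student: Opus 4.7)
The proof mirrors the strategy of \Cref{thm:conserved_J}. I would start by decomposing
\[
\pa_t K_1 = \{K_1,\widehat{\Hc}\}=\{K_1,\Hc_2\}+\{K_1,\widehat{\Hc}_{4,1}\}+\{K_1,\widehat{\Hc}_{4,2}\}+\{K_1,\widehat{\Hc}_{4,3}\}+\{K_1,\Rc_5\},
\]
identify the vanishing brackets, and estimate each of the remaining ones so that, after integration over $t\lesssim \varepsilon^{-\frac{5}{2}(1-\delta)+\kappa}$, the error is $\O(\varepsilon^{\frac{5}{2}(1-\delta)+\kappa})$.

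The vanishing brackets come from direct symbolic computation. First, $\{K_1,\Hc_2\}=0$ follows because only $k=\pm 1$ contribute and the corresponding $k^2$ weights cancel pairwise. Next, one observes that
\[
\widehat{\Hc}_{4,1}=2K_1\,(2M-J_1),\qquad M:=\sum_{k\in\Z} u_k v_k,
\]
and a short calculation gives $\{K_1,J_1\}=\{K_1,M\}=0$. Since $\{K_1,K_1\}=0$, the Leibniz rule for Poisson brackets yields $\{K_1,\widehat{\Hc}_{4,1}\}=0$. This is the structural analogue of $\{J_1,\widehat{\Hc}_{4,1}\}=0$ used for $J_1$.

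The genuinely new feature is that $K_1$ no longer commutes with $\widehat{\Hc}_{4,2}$. An explicit term-by-term computation gives
\[
\{K_1,\widehat{\Hc}_{4,2}\}=4i\, L_1\,(u_2 v_{-2}-u_{-2}v_2),
\]
which is purely real as it must be. I would bound this using the conclusions of \Cref{thm:conserved_J} on the same time interval: $J_1(t)\lesssim \varepsilon^{2(1-\delta)}$ gives $|L_1|\lesssim \varepsilon^{2(1-\delta)}$, while $\sum_{n\geq 2}J_n(t)\lesssim \varepsilon^{3(1-\delta)}$ gives $|u_{\pm 2}|\lesssim \varepsilon^{\frac{3}{2}(1-\delta)}$, so altogether $|\{K_1,\widehat{\Hc}_{4,2}\}|\lesssim \varepsilon^{5(1-\delta)}$. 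The remaining two brackets are controlled by the same or better powers: each monomial in $\widehat{\Hc}_{4,3}$ contains at most one mode of index $\pm 1$, so after taking derivatives one factor is $\O(\varepsilon^{1-\delta})$ (from $K_1$) and the other three are $\O(\varepsilon^{\frac{3}{2}(1-\delta)})$, yielding $|\{K_1,\widehat{\Hc}_{4,3}\}|\lesssim \varepsilon^{\frac{11}{2}(1-\delta)}$; and $|\{K_1,\Rc_5\}|\lesssim |u_{\pm 1}|\cdot \|X_{\Rc_5}(u)\|_{Y_\rho}\lesssim \varepsilon^{1-\delta}\cdot \varepsilon^{4(1-\delta)}=\varepsilon^{5(1-\delta)}$ by \Cref{thm:normal_form}(vi) and the bound $\|u(t)\|_{Y_\rho}\lesssim \varepsilon^{1-\delta}$ from the proof of \Cref{thm:conserved_J}.

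Combining these estimates gives $|\pa_t K_1(t)|\lesssim \varepsilon^{5(1-\delta)}$ throughout the time interval, and integrating yields $|K_1(t)-K_1(0)|\lesssim \varepsilon^{5(1-\delta)}\cdot \varepsilon^{-\frac{5}{2}(1-\delta)+\kappa}=\varepsilon^{\frac{5}{2}(1-\delta)+\kappa}$, as claimed. No independent bootstrap is needed: the a priori bounds from \Cref{thm:conserved_J} are already valid on the full interval. The main (mild) obstacle is the single non-vanishing bracket $\{K_1,\widehat{\Hc}_{4,2}\}$, which must be computed in closed form and then estimated by feeding back the tail bounds on $J_2$ and the smallness of $L_1$ simultaneously; without either input the resulting estimate would be too weak to be absorbed at the resonant timescale.
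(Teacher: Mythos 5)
Your proof is correct and follows essentially the same route as the paper: the vanishing of $\{K_1,\Hc_2\}$ and $\{K_1,\widehat{\Hc}_{4,1}\}$ via $\{K_1,J_1\}=\{K_1,M\}=0$, the closed-form computation of $\{K_1,\widehat{\Hc}_{4,2}\}$ bounded through $|L_1|\leq J_1$ and the tail bound on $J_2$ from \Cref{thm:conserved_J}, control of the $\widehat{\Hc}_{4,3}$ and $\Rc_5$ brackets by the same a priori bounds, and integration in time over $t\lesssim \varepsilon^{-\frac{5}{2}(1-\delta)+\kappa}$. The only (immaterial) difference is that your bracket $4i\,L_1\,(u_2v_{-2}-u_{-2}v_2)$ replaces the paper's $4i\,L_1K_2$ --- your sign is in fact the consistent one, since $K_1$ is real --- and in either case the modulus is at most $4J_1J_2\lesssim \varepsilon^{5(1-\delta)}$, so the final estimate is unchanged.
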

\begin{proof}
By \Cref{thm:normal_form} and \Cref{thm:conserved_J},
\begin{equation}\label{eq:K1}
\pa_t K_1 = \{ K_1, \Hc_2 +\widehat{\Hc}_{4,1}+\widehat{\Hc}_{4,2}\}  + \O ( \varepsilon^{5(1-\delta)}).
\end{equation}

Note that $\{K_1,\Hc_2\}=0$, since $\{ K_1, J_1 \}=0$ and $\Hc_2 =J_1 + \sum_{|k|>1} k^2 u_k v_k$.
Moreover, denoting by $M$ the mass $\sum_k u_k v_k$,
\[
\{ K_1, \widehat{\Hc}_{4,1}\} = \{ K_1, 2K_1 (2M- J_1)\} = 2 K_1 \{ K_1 , 2M- J_1\} = 4K_1 \{ K_1, M\} =0
\]
since $M=J_1 +  \sum_{|k|>1} u_k v_k$ and $\{ K_1, J_1 \}=0$.

Finally, one can check that $\{ K_1, \widehat{\Hc}_{4,2}\} = 4i L_1 K_2$ with $L_n$ defined in \eqref{eq:quantities}. It follows from \eqref{eq:K1} that 
\[ 
\pa_t K_1  = 4i L_1 K_2 +  \O ( \varepsilon^{5(1-\delta)}).
\]
Using the fact that $|K_2|\leq J_2$ and $|L_1|\leq J_1$ together with \Cref{thm:conserved_J}, we find that $\pa_t K_1= \O ( \varepsilon^{5(1-\delta)})$ and the result follows by integrating in time.
\end{proof}

We are ready to derive an approximation for the dynamics of the beating NLS equation \eqref{eq:beating}.

\begin{prop}\label{thm:approximation} Fix a small $\kappa>0$, $\delta\in (0,1)$ and suppose that $|\alpha| + |\beta|\leq c\varepsilon^{-\delta}$ for some $c>0$. Then there exists some $\varepsilon_0 = \varepsilon_0 (c,\delta)$ such that for all $\varepsilon<\varepsilon_0$ the following holds. If $t\lesssim \varepsilon^{-\frac{5}{2}(1-\delta)+\kappa}$, then we have that the solution to \eqref{eq:beating} satisfies
\begin{equation*}
\begin{split}
u_1(t) & =\varepsilon \, e^{-it-4i \varepsilon^2 \mathrm{Re}( \alpha \overline{\beta})\, t} \, \left[\alpha \cos \left( 2\varepsilon^2 t\, (|\alpha|^2+ |\beta|^2)\right) - i \beta  \sin  \left( 2\varepsilon^2 t\, (|\alpha|^2+ |\beta|^2)\right) \right]+ \O ( \varepsilon^{1-\delta+2\kappa}) ,\\
u_{-1}(t) & = \varepsilon \, e^{-it-4i \varepsilon^2 \mathrm{Re}( \alpha \overline{\beta})\, t}\, \left[\beta \cos  \left( 2\varepsilon^2 t\, (|\alpha|^2+ |\beta|^2)\right) - i \alpha  \sin  \left( 2\varepsilon^2 t\, (|\alpha|^2+ |\beta|^2)\right) \right]  +  \O ( \varepsilon^{1-\delta+2\kappa}),  \\
\sum_{|k|\neq 1} |u_k(t)| & = \O(\varepsilon^{\frac{3}{2}(1-\delta)}).
\end{split}
\end{equation*}
\end{prop}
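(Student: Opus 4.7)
The plan is to combine the Birkhoff normal form of \Cref{thm:normal_form} with the approximate conservation of $J_1$ and $K_1$ from \Cref{thm:conserved_J} and \Cref{thm:conserved_K} to reduce the full dynamics to an explicitly solvable linear $2\times 2$ system on the modes $\pm 1$. First I would pass to normal-form coordinates $\tilde u = \Phi^{-1}(u)$: by \Cref{thm:normal_form}, \emph{(vii)}, this differs from $u$ by $\O(\varepsilon^{2(1-\delta)})$ in $Y_\rho$, so the transformed initial data still satisfies $|\tilde u_{\pm 1}(0)|\lesssim \varepsilon^{1-\delta}$ with $\tilde u_k(0)=\O(\varepsilon^{2(1-\delta)})$ for $|k|\ne 1$, and the hypotheses of the two corollaries are met. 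Transferring back to original variables at the end contributes an error $\O(\varepsilon^{2(1-\delta)})$, which stays below $\O(\varepsilon^{1-\delta+2\kappa})$ as long as $\kappa<(1-\delta)/2$.

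Next I would show that only $\Hc_2$ together with the ``purely $\pm 1$'' piece $2K_1 J_1$ of $\widehat\Hc_{4,1}$ contributes to the drift of $\tilde u_{\pm 1}$ at leading order. Differentiating in $v_{\pm 1}$, every term of $\widehat\Hc_{4,2}$ and $\widehat\Hc_{4,3}$ leaves at least three factors with $|k|>1$; since \Cref{thm:conserved_J} gives $|\tilde u_k|\lesssim \varepsilon^{3(1-\delta)/2}$ for $|k|>1$, each such contribution is at most $\O(\varepsilon^{9(1-\delta)/2})$. The bound on $X_{\Rc_5}$ in \Cref{thm:normal_form}, \emph{(vi)}, contributes $\O(\varepsilon^{4(1-\delta)})$, while the ``tail'' part of $\widehat\Hc_{4,1}$ involving $\sum_{|k|>1} u_k v_k$ gives $\O(\varepsilon^{4(1-\delta)+\kappa})$. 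Freezing $J_1$ and $K_1$ at their initial values using \Cref{thm:conserved_J}--\Cref{thm:conserved_K} introduces a further error $\O(\varepsilon^{5(1-\delta)/2+\kappa})\cdot \O(\varepsilon^{1-\delta})=\O(\varepsilon^{7(1-\delta)/2+\kappa})$, which is the dominant source of error. After these simplifications, $\tilde u_1,\tilde u_{-1}$ satisfy, up to $\O(\varepsilon^{7(1-\delta)/2+\kappa})$,
\begin{equation*}
\partial_t \begin{pmatrix}\tilde u_1\\ \tilde u_{-1}\end{pmatrix}= -i(1+2K_1(0))\begin{pmatrix}\tilde u_1\\ \tilde u_{-1}\end{pmatrix}-2iJ_1(0)\begin{pmatrix}\tilde u_{-1}\\ \tilde u_1\end{pmatrix}.
\end{equation*}

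This frozen linear system diagonalises in $w_\pm=\tilde u_1\pm\tilde u_{-1}$, yielding $w_\pm(t)=e^{-i(1+2K_1(0)\pm 2J_1(0))t}w_\pm(0)$; substituting $J_1(0)=\varepsilon^2(|\alpha|^2+|\beta|^2)$, $K_1(0)=2\varepsilon^2\,\mathrm{Re}(\alpha\overline{\beta})$ and inverting the diagonalisation recovers exactly the cosine/sine formulas of the proposition. A standard Duhamel/Gronwall argument then bounds the deviation of $\tilde u$ from this explicit expression by $\O(\varepsilon^{7(1-\delta)/2+\kappa})\cdot t\lesssim \varepsilon^{1-\delta+2\kappa}$ on the timescale $t\lesssim \varepsilon^{-5(1-\delta)/2+\kappa}$. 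The tail estimate follows from Cauchy--Schwarz applied to the bound $\sum_{|k|>1}k^2|\tilde u_k(t)|^2\leq \Hc_2(t)-J_1(t)=\O(\varepsilon^{3(1-\delta)+\kappa})$ shown inside the proof of \Cref{thm:conserved_J}. The main obstacle is the careful bookkeeping required to ensure that all of the competing error sources --- the change of variables, the drift of $J_1$ and $K_1$, the contributions of $\widehat\Hc_{4,2},\widehat\Hc_{4,3},\Rc_5$, and the tail part of $\widehat\Hc_{4,1}$ --- stay below $\O(\varepsilon^{1-\delta+2\kappa})$ on the sharp timescale $\varepsilon^{-5(1-\delta)/2+\kappa}$, which is what forces the restriction on $\kappa$.
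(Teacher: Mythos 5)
Your proposal follows the paper's proof essentially step for step: pass to normal-form variables via \Cref{thm:normal_form}, reduce to the modes $\pm 1$ using the almost-conservation of $J_1$ and $K_1$ from \Cref{thm:conserved_J} and \Cref{thm:conserved_K}, freeze these quantities (the dominant $\O(\varepsilon^{\frac{7}{2}(1-\delta)+\kappa})$ error), solve the resulting $2\times 2$ system in $\tilde u_1\pm\tilde u_{-1}$, integrate the error over $t\lesssim \varepsilon^{-\frac{5}{2}(1-\delta)+\kappa}$, and obtain the tail bound by Cauchy--Schwarz from $\Hc_2-J_1$, exactly as the paper does. The only slip is the claim that differentiating $\widehat\Hc_{4,2}$ in $v_{\pm1}$ leaves at least three high modes --- it leaves only two, so that contribution is $\O(\varepsilon^{4(1-\delta)})$ (as in the paper) rather than $\O(\varepsilon^{\frac{9}{2}(1-\delta)})$ --- but since it is still dominated by the freezing error the conclusion is unaffected.
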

\begin{proof}
Consider $(u,\bar{u})=\Phi (w, \bar{w})$ with $\Phi$ given by \Cref{thm:normal_form}, in such a way that $u=(u_k)_{k\in\Z}$ are the Fourier coefficients of the solution to \eqref{eq:beating} and $w$ solves \eqref{eq:phi_beating} with initial data $w(0)=\Phi^{-1}\left( ( \varepsilon\,\alpha\, \delta_{j=1} + \varepsilon\,\beta\delta_{j=-1})_{j\in\Z}\right)$. 

By \Cref{thm:normal_form}, \emph{(vii)}, we have that
\[
\norm{w(0)}_{Y_0}\leq \norm{u(0)}_{Y_0} +  \O ( \norm{u(0)}_{Y_0}^2 )\lesssim \varepsilon^{1-\delta} +   \O ( \varepsilon^{2(1-\delta)})  \quad \Longrightarrow\quad  J_1 (0) \lesssim \varepsilon^{2(1-\delta)}.
\]

Let us obtain an approximation for $w$. By \Cref{thm:conserved_J} and \Cref{thm:conserved_K}
\[
\begin{split}
\pa_t w_1 & = \{ w_1, \widehat{\Hc}\} = \{ w_1, \widehat{\Hc}_2 +\widehat{\Hc}_{4,1} \}  + \O (\varepsilon^{4(1-\delta)}) \\
& = -i (1+2K_1)\, w_1 -2 i  (2 M -J_1)\,  w_{-1}  + \O (\varepsilon^{4(1-\delta)})  \\
&  = -i (1+2K_1(0))\, w_1 -2 i  J_1(0) \,  w_{-1}  +\O (\varepsilon^{\frac{7}{2}(1-\delta)+\kappa})
\end{split}
\]
since $M=J_1(0)$ due to the choice of initial data in \eqref{eq:phi_beating}.

A similar argument leads to
\[
\pa_t w_{-1} =  -i (1+2K_1(0))\, w_{-1} -2 i  J_1(0) \,  w_{1}  +\O (\varepsilon^{\frac{7}{2}(1-\delta)+\kappa}).
\]
Note that $J_1(0)=M$ and define the change of variables
\begin{equation}\label{eq:modulation}
\widetilde{w_k}(t) = \exp \left(i t +2i K_1(0) t \right) w_k(t) \qquad \mbox{for } k=\pm 1.
\end{equation}
In these new coordinates, we obtain:
\[
\begin{cases}
\pa_t \widetilde{w_1} = -2iJ_1(0) \widetilde{w_{-1}}+ \O (\varepsilon^{\frac{7}{2}(1-\delta)+\kappa}) \\
\pa_t \widetilde{w_{-1}} = -2iJ_1(0) \widetilde{w_{1}} + \O (\varepsilon^{\frac{7}{2}(1-\delta)+\kappa})
\end{cases} 
\]
Writing the system in terms of $\widetilde{w_1}+ \widetilde{w_{-1}}$ and $\widetilde{w_1}- \widetilde{w_{-1}}$ and integrating in time yields the following explicit solution:
\[
\begin{cases}
\widetilde{w_1} (t) = w_1 (0) \cos (2J_1(0)t) - i w_{-1}(0) \sin (2J_1(0)t) + \O (\varepsilon^{1-\delta+2\kappa}) \\
\widetilde{w_{-1}}(t) = w_{-1} (0) \cos (2J_1(0)t) - i w_1(0) \sin (2J_1(0)t)+\O (\varepsilon^{1-\delta+2\kappa}) .
\end{cases}
\]
Undoing the transformation \eqref{eq:modulation} yields:
\[
\begin{cases}
w_1(t) (t) = e^{-i t - 2i K_1(0) t} \, \left[ w_1 (0) \cos (2J_1(0)t) - i w_{-1}(0) \sin (2J_1(0)t)\right] + \O (\varepsilon^{1-\delta+2\kappa}) \\
w_{-1}(t) =e^{-i t - 2i K_1(0) t} \, \left[ w_{-1} (0) \cos (2J_1(0)t) - i w_1(0) \sin (2J_1(0)t)\right] +\O (\varepsilon^{1-\delta+2\kappa}) .
\end{cases}
\]

By \Cref{thm:normal_form}, \emph{(vii)}, we have that $u_k(t)= w_k(t) + \O( \varepsilon^{2(1-\delta)})$, and therefore:
\[
\begin{cases}
u_1 (t) = e^{-i t - 2i K_1(0) t} \,\left[ u_1 (0) \cos (2J_1(0)t) - i u_{-1}(0) \sin (2J_1(0)t) \right] + \O (\varepsilon^{1-\delta+2\kappa}) \\
u_{-1} (t) = e^{-i t - 2i K_1(0) t} \,\left[  u_{-1} (0) \cos (2J_1(0)t) - i u_1(0) \sin (2J_1(0)t)\right]+\O (\varepsilon^{1-\delta+2\kappa}) .
\end{cases}
\]
The desired formulas follow from a final application of \Cref{thm:normal_form}, \emph{(vii)}, which guarantees
\[
\begin{split}
K_1(0) & =2\mbox{Re}\left(w_1 (0) \, \overline{w_{-1} (0) }\right)= 2\varepsilon^2\, \mbox{Re}(\alpha \overline{\beta})+  \O( \varepsilon^{3(1-\delta)})\\
J_1(0) & =|w_1 (0)|^2+ |w_{-1} (0)|^2 =\varepsilon^2\, (|\alpha|^2 + |\beta|^2) +  \O( \varepsilon^{3(1-\delta)}).
\end{split}
\]
The bound on  $\sum_{|k|\neq 1} |u_k(t)|$ follows from \Cref{thm:normal_form}, \emph{(vii)}, with $\rho=0$ and \eqref{eq:energy_tail}-\eqref{eq:bound_tail}, in view of the inequality:
\[
\left( \sum_{|k|\neq 1} |w_k(t)| \right)^2 \lesssim \sum_{|k|\neq 1} |k|^2 |w_k(t)|^2 \lesssim \varepsilon^{3(1-\delta)}.
\]
\end{proof}

\section{Large deviations principle}\label{sec:LDP}
Consider \eqref{eq:beating} with $\alpha$ and $\beta$ two complex, independent Gaussian random variables with $\E \alpha = \E \beta =0$, and with possibly different variance:
\begin{equation}
\begin{cases}
i \pa_t u + \pa_x^2 u = 2\, \cos (2x)\, |u|^2 u, \quad x\in\T,\\
u(t,x)\mid_{t=0}= \varepsilon\,  ( \alpha\, e^{ix} + \beta\, e^{-ix}).
\end{cases}
\end{equation}

Writing $u(t,x)=\sum_{k\in\Z} u_k(t) e^{ikx}$, \Cref{thm:approximation} yields an approximation to the solution for long timescales $|t|\leq \varepsilon^{-\frac{5}{2}(1-\delta)+\kappa}$ as long as our initial data is not too large. Under this approximation, we have that 
\[
\begin{split}
\sup_{x\in\T} |u(t,x)|^2 &= \sup_{x\in\T} \Big |\sum_{k\in\Z}  u_k(t) e^{ikx}\Big |^2 = \sup_{x\in\T} \Big | u_1(t) \, e^{ix} +u_{-1}(t) e^{-ix} \Big |^2 + \O( \varepsilon^{3(1-\delta)}) \\
& = |u_{1}(t)|^2 + |u_{-1}(t)|^2 + 2 \sup_{x\in\T} \, \mbox{Re} ( u_{1}(t)\, \overline{u_{-1}(t)}\, e^{2ix} ) + \O( \varepsilon^{3(1-\delta)})\\
& =    \left( |u_{1}(t)|^2 + |u_{-1}(t)|^2 + 2 |u_{1}(t)| |u_{-1}(t)|\right) +\O( \varepsilon^{3(1-\delta)})\\
& =( |u_{1}(t)|+|u_{-1}(t)|)^2 + \O( \varepsilon^{3(1-\delta)})
\end{split}
\]
and thus 
\begin{equation}\label{eq:sup}
\begin{split}
\sup_{x\in\T} |u(t,x)| =&\ \varepsilon\, \sqrt{|\alpha|^2 \cos^2 \left( 2\varepsilon^2 t\, (|\alpha|^2+ |\beta|^2)\right) + |\beta|^2  \sin^2 \left( 2\varepsilon^2 t\, (|\alpha|^2+ |\beta|^2)\right) } \\
& + \varepsilon\, \sqrt{|\beta|^2 \cos^2 \left( 2\varepsilon^2 t\, (|\alpha|^2+ |\beta|^2)\right) + |\alpha|^2  \sin^2 \left( 2\varepsilon^2 t\, (|\alpha|^2+ |\beta|^2)\right)} + \O( \varepsilon^{\frac{3}{2}(1-\delta)}).
\end{split}
\end{equation}

In order to obtain sharp bounds on the tails of the supremum of $u$, we will use an elementary large deviations principle for the $\ell^1$ and $\ell^2$ norms of a vector of two independent complex Gaussian random variables. A generalization of \eqref{eq:LDP_L1} below to the case of an infinite vector may be found in \cite{GGKS}.
\begin{lem}\label{thm:L1_L2_LDP} Let $\alpha$ and $\beta$ be complex Gaussian random variables with zero mean and variance $\E |\alpha|^2 =\sigma_{\alpha}^2$, $\E |\beta|^2 = \sigma_{\beta}^2$. We then have
\begin{align}
\lim_{\lambda \rightarrow \infty} \lambda^{-2}\, \log \P (  |\alpha| + |\beta|  > z_0 \lambda ) & = - \frac{z_0^2}{\sigma_{\alpha}^2+ \sigma_{\beta}^2} , \label{eq:LDP_L1} \\
\lim_{\lambda \rightarrow \infty} \lambda^{-2}\, \log \P (  \sqrt{2} \sqrt{|\alpha|^2 + |\beta|^2}   > z_0 \lambda ) & = - \frac{z_0^2}{2\,\max\{\sigma_{\alpha}^2, \sigma_{\beta}^2\}} .\label{eq:LDP_L2}
\end{align}
In particular, the right-hand side of \eqref{eq:LDP_L1} and \eqref{eq:LDP_L2} are equal when $\sigma_{\alpha}^2 =\sigma_{\beta}^2$.
\end{lem}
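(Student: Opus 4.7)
The plan is to compute both limits directly from the explicit joint distribution of $(|\alpha|, |\beta|)$, which factorises by independence. Since $\alpha$ is a centered complex Gaussian with $\E|\alpha|^2 = \sigma_\alpha^2$, the modulus $|\alpha|$ is Rayleigh-distributed with density $(2a/\sigma_\alpha^2)\, e^{-a^2/\sigma_\alpha^2}$ on $[0,\infty)$, while $|\alpha|^2$ is exponential with mean $\sigma_\alpha^2$ (and similarly for $\beta$). Both \eqref{eq:LDP_L1} and \eqref{eq:LDP_L2} then reduce to the exponential asymptotics of explicit two-dimensional integrals, in the same spirit as a classical Cram\'er-type large deviations computation.

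For \eqref{eq:LDP_L1}, I would start from the identity
\[
\P(|\alpha|+|\beta|>z_0\lambda) = \iint_{\{a+b>z_0\lambda\}} \frac{4ab}{\sigma_\alpha^2 \sigma_\beta^2}\, e^{-a^2/\sigma_\alpha^2 - b^2/\sigma_\beta^2}\, da\, db.
\]
For the upper bound, fix $\eta\in(0,1)$ and write the exponent as $\eta + (1-\eta)$ times itself. By the Cauchy-Schwarz inequality $(a+b)^2 \leq (\sigma_\alpha^2+\sigma_\beta^2)(a^2/\sigma_\alpha^2 + b^2/\sigma_\beta^2)$, the $\eta$-portion is bounded below by $\eta z_0^2\lambda^2/(\sigma_\alpha^2+\sigma_\beta^2)$ on the integration domain; after extracting this factor, the residual integral factorises over the positive quadrant and evaluates to $1/(1-\eta)^2$. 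Taking logarithms, dividing by $\lambda^2$ and sending $\eta\to 1^-$ yields the desired limsup bound. For the matching liminf, I would identify the unique minimizer $(a^\ast, b^\ast) = z_0\lambda(\sigma_\alpha^2, \sigma_\beta^2)/(\sigma_\alpha^2+\sigma_\beta^2)$ of $a^2/\sigma_\alpha^2 + b^2/\sigma_\beta^2$ on the line $a+b=z_0\lambda$ via Lagrange multipliers, at which the exponent equals exactly $z_0^2\lambda^2/(\sigma_\alpha^2+\sigma_\beta^2)$. Restricting the integral to the unit box $[a^\ast, a^\ast+1]\times[b^\ast, b^\ast+1]$, which sits inside $\{a+b>z_0\lambda\}$, gives an integrand of order $\lambda^2\, e^{-z_0^2\lambda^2/(\sigma_\alpha^2+\sigma_\beta^2)-O(\lambda)}$, and after multiplication by $\lambda^{-2}$ the logarithm converges to the same value $-z_0^2/(\sigma_\alpha^2+\sigma_\beta^2)$.

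For \eqref{eq:LDP_L2}, I would rewrite the event as $\{|\alpha|^2+|\beta|^2 > z_0^2\lambda^2/2\}$ and exploit the fact that $|\alpha|^2$ and $|\beta|^2$ are independent exponentials. Convolving them (or directly consulting the hypoexponential distribution) yields the closed-form tail, assuming without loss of generality $\sigma_\alpha^2\geq \sigma_\beta^2$,
\[
\P(|\alpha|^2+|\beta|^2 > R) = \begin{cases} \dfrac{\sigma_\alpha^2\, e^{-R/\sigma_\alpha^2} - \sigma_\beta^2\, e^{-R/\sigma_\beta^2}}{\sigma_\alpha^2-\sigma_\beta^2}, & \sigma_\alpha^2 > \sigma_\beta^2, \\ (1 + R/\sigma_\alpha^2)\, e^{-R/\sigma_\alpha^2}, & \sigma_\alpha^2=\sigma_\beta^2. \end{cases}
\]
Substituting $R = z_0^2\lambda^2/2$, the dominant exponential rate is $-1/\sigma_\alpha^2$ in both cases, while the subleading exponential and the polynomial prefactor contribute only $O(\log\lambda)$, which is negligible once we multiply by $\lambda^{-2}$. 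This produces the stated rate $-z_0^2/(2\max\{\sigma_\alpha^2,\sigma_\beta^2\})$. Since the lemma is labelled \emph{elementary} in the paper, I do not expect any serious obstacle; the only point requiring care is the bookkeeping of the polynomial prefactor $4ab/(\sigma_\alpha^2\sigma_\beta^2)$ in the lower bound of \eqref{eq:LDP_L1}, which is handled by choosing an integration box whose side length is macroscopic (a unit length suffices) rather than shrinking with $\lambda$.
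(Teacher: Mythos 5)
Your argument is correct. For \eqref{eq:LDP_L2} you essentially reproduce the paper's calculation: the paper computes the density of $|\alpha|^2+|\beta|^2$ by convolution of exponentials and then integrates the tail, while you quote the resulting hypoexponential tail directly; the two are identical in substance. For \eqref{eq:LDP_L1}, however, your route genuinely differs from the paper's. The paper performs the double Rayleigh integral \emph{exactly}: integrating first in $b$ it splits the probability as $\exp(-z_0^2\lambda^2/(\sigma_\alpha^2+\sigma_\beta^2))\,\Ic(\lambda) + \exp(-z_0^2\lambda^2/\sigma_\alpha^2)$, and the work is then reduced to proving the two-sided polynomial bound $\lambda\lesssim\Ic(\lambda)\lesssim\lambda^2$. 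You instead take a Cram\'er/Laplace-type route: for the upper bound you use the Cauchy--Schwarz inequality $(a+b)^2\le(\sigma_\alpha^2+\sigma_\beta^2)(a^2/\sigma_\alpha^2+b^2/\sigma_\beta^2)$ together with the usual $\eta/(1-\eta)$ exponential-tilting trick, and for the lower bound you locate the Lagrange minimizer $(a^\ast,b^\ast)$ of $a^2/\sigma_\alpha^2+b^2/\sigma_\beta^2$ on $\{a+b=z_0\lambda\}$ and restrict the integral to a unit box at that point. Both arguments identify the same optimal point and rate; the paper's exact computation gives sharper-than-logarithmic control of the prefactor (indeed $\Ic(\lambda)\gtrsim\lambda$ is an honest Gaussian-profile estimate), whereas your version is softer but more robust and would transfer with no change to vectors of more than two entries or to tail estimates where an exact closed form is unavailable. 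Both are complete proofs; nothing is missing.
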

\begin{proof}
{\bf $\ell^1$-norm.} If $\eta$ is a complex Gaussian r.v.\ with $\E \eta=0$ and $\E |\eta|^2 = \sigma^2$ then $|\eta|$ is Rayleigh distributed with probability density function (pdf):
\[
f(a)= \frac{2a}{\sigma^2} \, \exp\left(-\frac{a^2}{\sigma^2}\right) \qquad \mbox{for}\ a\geq 0.
\]
As a result,
\[
\begin{split}
 \P (  |\alpha| + |\beta|   > z_0 \lambda )  =&\  \int_{0}^{\infty}\int_0^{\infty} \chi_{a+b>z_0\lambda} \, \frac{4ab}{\sigma_{\alpha}^2\,\sigma_{\beta}^2} \,\exp\left(-\frac{a^2}{\sigma_{\alpha}^2}-\frac{b^2}{\sigma_{\beta}^2} \right) \, da\, db \\
  = &\ \int_0^{z_0\lambda} \frac{2a}{\sigma_{\alpha}^2} \, \exp\left(-\frac{a^2}{\sigma_{\alpha}^2} - \frac{(z_0\lambda-a)^2}{\sigma_{\beta}^2}\right)\, da +  \int_{z_0\lambda}^{\infty} \frac{2a}{\sigma_{\alpha}^2} \, \exp\left(-\frac{a^2}{\sigma_{\alpha}^2}\right)\, da\\
  = &\ \exp\left(-\frac{z_0^2\lambda^2}{\sigma_{\alpha}^2+\sigma_{\beta}^2} \right)\, \Ic (\lambda) +  \exp\left(-\frac{z_0^2\, \lambda^2}{\sigma_{\alpha}^2}\right) ,
 \end{split}
\]
where 
\begin{equation}
\Ic (\lambda)= \int_0^{z_0\lambda} \frac{2a}{\sigma_{\alpha}^2} \, \exp\left(-\frac{1}{\sigma_{\alpha}^2\sigma_{\beta}^2} \, \left( \sqrt{\sigma_{\alpha}^2 + \sigma_{\beta}^2} \, a - \frac{z_0\lambda\sigma_{\alpha}^2}{\sqrt{\sigma_{\alpha}^2 + \sigma_{\beta}^2}}\right)^2 \right)\, da.
\end{equation}

In order to obtain \eqref{eq:LDP_L1}, it suffices to show that $\lambda \lesssim \Ic ( \lambda) \lesssim \lambda^2$ for $\lambda\gg 1$. Note that bounding the exponential term by 1 yields $\Ic (\lambda)\leq z_0^2 \lambda^2 /\sigma_{\alpha}^2$.

To derive the lower bound, we consider $\lambda\gg z_0^{-1}$ sufficiently large, and we write:
\[
\begin{split}
\Ic (\lambda) & \geq \int_{\frac{z_0\lambda\sigma_{\alpha}^2}{\sigma_{\alpha}^2 + \sigma_{\beta}^2}}^{z_0\lambda} \frac{2a}{\sigma_{\alpha}^2} \, \exp\left(-\frac{1}{\sigma_{\alpha}^2\sigma_{\beta}^2} \, \left( \sqrt{\sigma_{\alpha}^2 + \sigma_{\beta}^2} \, a - \frac{z_0\lambda\sigma_{\alpha}^2}{\sqrt{\sigma_{\alpha}^2 + \sigma_{\beta}^2}}\right)^2 \right)\, da\\
& \gtrsim \lambda\, \int_{\frac{z_0\lambda\sigma_{\alpha}^2}{\sigma_{\alpha}^2 + \sigma_{\beta}^2}}^{z_0\lambda} \exp\left(-\frac{1}{\sigma_{\alpha}^2\sigma_{\beta}^2} \, \left( \sqrt{\sigma_{\alpha}^2 + \sigma_{\beta}^2} \, a - \frac{z_0\lambda\sigma_{\alpha}^2}{\sqrt{\sigma_{\alpha}^2 + \sigma_{\beta}^2}}\right)^2 \right)\, da\\
& \gtrsim  \lambda \left[ \int_0^{\infty} \exp\left(-\frac{r^2}{\sigma_{\alpha}^2\sigma_{\beta}^2}\right) \, dr - \int_{\frac{z_0\lambda\sigma_{\beta}^2}{\sqrt{\sigma_{\alpha}^2+\sigma_{\beta}^2}}}^{\infty} \exp\left(-\frac{r^2}{\sigma_{\alpha}^2\sigma_{\beta}^2}\right) \, dr\right] \gtrsim \lambda.
\end{split}
\]

\noindent {\bf $\ell^2$-norm.}
First we assume that $\sigma_{\alpha}^2 =\sigma_{\beta}^2=\sigma^2$. Then the r.v.\ $|\alpha|^2$ has exponential distribution with pdf
\[
f_{|\alpha|^2}(a)= \sigma^{-2}\, e^{-a/\sigma^2} \qquad \mbox{for}\ a\geq 0.
\]
As a result, the pdf of $|\alpha|^2 + |\beta|^2$ is
\[
f(a) = \int_0^{a} \sigma^{-4}\, e^{-b/\sigma^2} e^{-(a-b)/\sigma^2}\, db = \frac{a}{\sigma^{4}} e^{-a/\sigma^2}\qquad \mbox{for}\ a\geq 0.
\]
Integration by parts leads to the following equality:
\[
\begin{split}
\P (  \sqrt{2} \sqrt{|\alpha|^2 + |\beta|^2}   > z_0 \lambda ) & = \frac{z_0^2\lambda^2}{2\sigma^2} \exp\left( -\frac{z_0^2 \lambda^2}{2\sigma^2}\right) + \int_{z_0^2 \lambda^2/2}^{\infty}  \frac{1}{\sigma^2} e^{-a/\sigma^2}\, da \\
&= \left( \frac{z_0^2\lambda^2}{2\sigma^2} +1\right) \, \exp\left( -\frac{z_0^2 \lambda^2}{2\sigma^2}\right) ,
\end{split}
\]
which directly implies \eqref{eq:LDP_L2}.

Assume next that  $\sigma_{\alpha}^2 > \sigma_{\beta}^2$. Then the pdf of $|\alpha|^2 + |\beta|^2$ is
\[
f(a) = \int_0^{a} \sigma_{\alpha}^{-2} \sigma_{\beta}^{-2}\, \exp\left( -\frac{b}{\sigma_{\beta}^2} -\frac{a-b}{\sigma_{\alpha}^2}\right)\, db 
%= e^{-a/\sigma_{\alpha}^2} \, \frac{1}{\sigma_{\alpha}^2\sigma_{\beta}^2} \,\int_0^{a}  e^{-b \frac{\sigma_{\alpha}^2 - \sigma_{\beta}^2}{\sigma_{\beta}^2\sigma_{\alpha}^2}} \, db 
= \frac{1}{\sigma_{\alpha}^2-\sigma_{\beta}^2} \,  \left(\exp\left(-a/\sigma_{\alpha}^2\right) - \exp\left(-a/\sigma_{\beta}^2\right) \right)  \quad \mbox{for}\ a\geq 0.
\]
Direct integration yields
\[
\begin{split}
\P (  \sqrt{2} \sqrt{|\alpha|^2 + |\beta|^2}   > z_0 \lambda ) & = \frac{1}{\sigma_{\alpha}^2-\sigma_{\beta}^2} \left[ \sigma_{\alpha}^2 \,\exp\left( -\frac{z_0^2 \lambda^2}{2\sigma_{\alpha}^2}\right) - \sigma_{\beta}^2 \exp\left( -\frac{z_0^2 \lambda^2}{2\sigma_{\beta}^2}\right)\right]
\end{split}
\]
and \eqref{eq:LDP_L2} follows.
\end{proof}

\subsection{Equal variance}

In the case of resonant modes with equal variance which exchange energy, we may show that a time-independent large deviations principle holds. In order to obtain such an LDP, we derive sharp upper and lower bounds.

For the purpose of obtaining an upper bound which is independent of time, we may exploit the following inequalities:
\begin{equation}\label{eq:general_upper_bound}
\begin{split}
\sup_{x\in\T} |u(t,x)| & \leq |u_1(t)| + |u_{-1}(t)| + \O( \varepsilon^{\frac{3}{2}(1-\delta)}) \leq \sqrt{2}\, \sqrt{|u_1(t)|^2 + |u_{-1}(t)|^2}+ \O( \varepsilon^{\frac{3}{2}(1-\delta)}) \\
&  = \sqrt{2} \varepsilon\, \sqrt{|\alpha|^2 + |\beta|^2} + \O( \varepsilon^{\frac{3}{2}(1-\delta)}) \qquad \forall t\geq 0,
\end{split}
\end{equation}
assuming $|\alpha|+|\beta|\leq c\, \varepsilon^{-\delta}$.

In order to obtain a lower bound, we will exploit the following inequality instead:
\begin{equation}\label{eq:general_lower_bound}
\varepsilon\, (|\alpha| + |\beta|) \leq\sup_{x\in\T} |u(t,x)| +  \O( \varepsilon^{\frac{3}{2}(1-\delta)}),
\end{equation}
assuming $|\alpha|+|\beta|\leq c\, \varepsilon^{-\delta}$. This inequality follows from \eqref{eq:sup} by differentiating in time and showing that the minimum is achieved exactly when the sine or the cosine vanish.

These results give rise to the following LDP for solutions to \eqref{eq:beating} where the random variables in the initial data have the same variance:

\begin{prop}\label{thm:main_equal_var} Fix $z_0>0$, a small $\kappa>0$ and $\delta\in (0,1)$. Let $u$ be the solution to \eqref{eq:beating} where $\alpha$ and $\beta$ are complex, i.i.d. Gaussian random variables with zero mean and variance $\E |\alpha|^2 = \E |\beta|^2 = \sigma^2$. Then for times $t\lesssim \varepsilon^{-\frac{5}{2}(1-\delta)+\kappa}$, we have that
\[
\lim_{\varepsilon\rightarrow 0^{+}} \varepsilon^{2\delta} \log \P \left( \sup_{x\in\T} |u(t,x)| \geq z_0 \varepsilon^{1-\delta}\right) = - \frac{z_0^2}{2\sigma^2}.
\]
\end{prop}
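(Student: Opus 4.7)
The plan is to sandwich $\sup_{x\in\T}|u(t,x)|$ between the deterministic quantities $\varepsilon(|\alpha|+|\beta|)$ from below and $\sqrt{2}\,\varepsilon\sqrt{|\alpha|^2+|\beta|^2}$ from above (up to $O(\varepsilon^{3(1-\delta)/2})$ errors), using exactly \eqref{eq:general_upper_bound}--\eqref{eq:general_lower_bound}, and then invoke \Cref{thm:L1_L2_LDP}. The decisive observation is that when $\sigma_\alpha^2=\sigma_\beta^2=\sigma^2$, the $\ell^1$ and $\ell^2$ rate functions in \Cref{thm:L1_L2_LDP} both equal $-z_0^2/(2\sigma^2)$, so the sandwich collapses to a sharp LDP. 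The time variable becomes invisible: the beating dynamics only shuffle $L^2$-mass between $u_1$ and $u_{-1}$, and on the logarithmic scale symmetric Gaussians do not detect this redistribution.

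First I would restrict to the high-probability event $E_\varepsilon := \{|\alpha|+|\beta|\leq c\varepsilon^{-\delta}\}$ for some $c$ chosen large enough that the Gaussian tail bound
\[
\P(E_\varepsilon^c) \;\leq\; C\,\exp\!\Bigl(-\tfrac{c^2}{4\sigma^2}\,\varepsilon^{-2\delta}\Bigr)
\]
produces an exponential rate strictly below $-z_0^2/(2\sigma^2)$, making $E_\varepsilon^c$ negligible on the large deviations scale. On $E_\varepsilon$ the hypothesis of \Cref{thm:approximation} is satisfied throughout the timescale $t\lesssim\varepsilon^{-\tfrac{5}{2}(1-\delta)+\kappa}$, so \eqref{eq:sup} and hence \eqref{eq:general_upper_bound}--\eqref{eq:general_lower_bound} are valid.

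For the upper bound, on $E_\varepsilon$ the event $\sup_{x\in\T}|u(t,x)|\geq z_0\varepsilon^{1-\delta}$ forces, by \eqref{eq:general_upper_bound},
\[
\sqrt{2}\,\sqrt{|\alpha|^2+|\beta|^2}\;\geq\;z_0\,\varepsilon^{-\delta}-O(\varepsilon^{(1-3\delta)/2})\;=\;(z_0-o(1))\,\varepsilon^{-\delta},
\]
since $\delta<1$. Applying \eqref{eq:LDP_L2} with $\lambda=\varepsilon^{-\delta}$ in the equal-variance case yields $\limsup_{\varepsilon\to 0^+}\varepsilon^{2\delta}\log\P(\cdots)\leq -z_0^2/(2\sigma^2)$. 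For the lower bound, \eqref{eq:general_lower_bound} on $E_\varepsilon$ shows that $|\alpha|+|\beta|\geq (z_0+o(1))\varepsilon^{-\delta}$ already implies $\sup|u(t,\cdot)|\geq z_0\varepsilon^{1-\delta}$, hence
\[
\P\bigl(\sup|u(t,\cdot)|\geq z_0\varepsilon^{1-\delta}\bigr)\;\geq\;\P\bigl(|\alpha|+|\beta|\geq (z_0+o(1))\varepsilon^{-\delta}\bigr)-\P(E_\varepsilon^c),
\]
and \eqref{eq:LDP_L1} with $\lambda=\varepsilon^{-\delta}$ supplies the matching liminf $-z_0^2/(\sigma_\alpha^2+\sigma_\beta^2)=-z_0^2/(2\sigma^2)$.

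There is no genuine obstacle here: the proof reduces to routine bookkeeping of error terms plus a direct application of \Cref{thm:L1_L2_LDP}. What is worth highlighting is \emph{why} the equal-variance case is this clean. The quantity $|\alpha|^2+|\beta|^2$ is (approximately) conserved by the dynamics, and in the symmetric Gaussian setting the elementary inequality $|a|+|b|\leq\sqrt{2}\sqrt{|a|^2+|b|^2}$ is statistically saturated on the logarithmic scale. This explains why no time-dependent correction to the rate appears, in sharp contrast to the different-variance regime of \Cref{thm:main_intro}, where the gap between \eqref{eq:LDP_L1} and \eqref{eq:LDP_L2} forces the substantial machinery of Sections~\ref{sec:implicit}--\ref{sec:Laplace}.
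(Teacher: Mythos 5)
Your proposal is correct and follows essentially the same route as the paper's proof: restricting to the event $\{|\alpha|+|\beta|\leq c\varepsilon^{-\delta}\}$ (negligible complement via \eqref{eq:LDP_L1}), then combining \eqref{eq:general_upper_bound} with \eqref{eq:LDP_L2} for the upper bound and \eqref{eq:general_lower_bound} with \eqref{eq:LDP_L1} for the lower bound, with the equal-variance coincidence of the two rates closing the sandwich. No substantive difference from the argument given in the paper.
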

\begin{proof}
Let $\varepsilon$ be sufficiently small for \Cref{thm:approximation} to hold. Define the events
\[
\Ac (\lambda) = \{  \norm{u(t)}_{L^{\infty}_x (\T)} \geq  \lambda\},\quad \Bc_1 ( \lambda ) =\{ |\alpha| +|\beta| \geq  \lambda \},\quad  \Bc_2 (\lambda ) = \{ \sqrt{|\alpha|^2 + |\beta|^2} \geq  \lambda\}.
\]

By \eqref{eq:LDP_L1}, we have that 
\begin{equation}\label{eq:fix_c}
\lim_{\varepsilon\rightarrow 0^{+}} \varepsilon^{2\delta}\, \log \P ( \Bc_1 (c\varepsilon^{-\delta}) ) \leq -\frac{c^2}{2\sigma^2} < - 100 \, \frac{z_0^2}{2\sigma^2}
\end{equation}
by fixing $c> 10 z_0$.

Note that 
\[
\P(\Ac ( z_0 \varepsilon^{1-\delta})) = \P(\Ac ( z_0 \varepsilon^{1-\delta}) \cap  \Bc_1 (c\varepsilon^{-\delta})^c ) +  \P(\Ac ( z_0 \varepsilon^{1-\delta}) \cap  \Bc_1 (c\varepsilon^{-\delta}) ) .
\]
By \eqref{eq:fix_c}, the second summand is negligible since
\[
0 \leq \P(\Ac ( z_0 \varepsilon^{1-\delta}) \cap  \Bc_1 (c\varepsilon^{-\delta}) ) \leq \P ( \Bc_1 (c\varepsilon^{-\delta}) ) \leq \exp\left( - 100 \, \frac{z_0^2\varepsilon^{-2\delta}}{2\sigma^2}\right).
\]
We highlight that our equation \eqref{eq:beating} is a.s.\ globally well-posed and thus the solution is well-defined in the set $\Bc_1 (c\varepsilon^{-\delta})$.

Finally, \eqref{eq:general_upper_bound} and \eqref{eq:LDP_L2} yield
\begin{equation}\label{eq:step_L2}
 \log \P(\Ac ( z_0 \varepsilon^{1-\delta}) \cap  \Bc_1 (c\varepsilon^{-\delta})^c ) \leq \log \P ( \Bc_2 ( \frac{z_0 \varepsilon^{-\delta}}{\sqrt{2}} - \O ( \varepsilon^{\frac{1}{2}-\frac{3}{2}\delta}))) \leq -\frac{z_0^2}{2\sigma^2}\, \varepsilon^{-2\delta} + o( \varepsilon^{-2\delta}).
\end{equation}
Similarly, \eqref{eq:general_lower_bound} and \eqref{eq:LDP_L1} yield:
\begin{equation}
 \log \P(\Ac ( z_0 \varepsilon^{1-\delta}) \cap  \Bc_1 (c\varepsilon^{-\delta})^c ) \geq  \log \P ( \Bc_1 ( z_0 \varepsilon^{-\delta}+ \O ( \varepsilon^{\frac{1}{2}-\frac{3}{2}\delta}))) \geq  -\frac{z_0^2}{2\sigma^2}\, \varepsilon^{-2\delta} - o( \varepsilon^{-2\delta})
\end{equation}
and the desired limit follows.
\end{proof}

\subsection{Different variance}

The rest of this article is devoted to the richer case of initial data of the form \eqref{eq:beating} where $\alpha$ and $\beta$ are independent, complex Gaussian random variables with zero mean and \emph{different} variances 
\[
\E |\alpha|^2 = \sigma_{\alpha}^2 >  \sigma_{\beta}^2 = \E |\beta|^2.
\]

\subsubsection{Sub-resonant timescales}

Our first result concerns timescales $t\ll \varepsilon^{-2(1-\delta)}$ where the dynamics are well-approximated by the linear equation. Indeed, suppose that $d(\varepsilon):= \varepsilon^{2(1-\delta)} t = o(1)$ as $\varepsilon\rightarrow 0^{+}$.

Then we may carry out another approximation step in \eqref{eq:sup}. Assuming that $|\alpha|+|\beta|\leq c\varepsilon^{-\delta}$ for some $c>0$, we have that
\[
\cos \left( 2\varepsilon^2 t\, (|\alpha|^2+ |\beta|^2)\right) = 1 + \O ( d(\varepsilon)^2), \qquad \sin \left( 2\varepsilon^2 t\, (|\alpha|^2+ |\beta|^2)\right) = \O( d(\varepsilon)).
\]
As a result, \eqref{eq:sup} becomes
\begin{equation}\label{eq:sup_subresonant}
\sup_{x\in\T} |u(t,x)| = \varepsilon\, (|\alpha|+|\beta|) + \O( \varepsilon d(\varepsilon) (|\alpha|+|\beta|)) + \O( \varepsilon^{\frac{3}{2}(1-\delta)}).
\end{equation}

This leads to
 
\begin{prop}\label{thm:LDP_diff_linear} Fix $z_0>0$ and $\delta\in (0,1)$. Let $u$ be the solution to \eqref{eq:beating} where $\alpha$ and $\beta$ are complex, independent Gaussian random variables with zero mean and variance $\E |\alpha|^2 = \sigma_{\alpha}^2 >  \sigma_{\beta}^2 = \E |\beta|^2$. Then for times $t$ such that $\lim_{\varepsilon\rightarrow 0^{+}} \varepsilon^{2(1-\delta)} t = 0$, we have that
\[
\lim_{\varepsilon\rightarrow 0^{+}} \varepsilon^{2\delta} \log \P \left( \sup_{x\in\T} |u(t,x)| \geq z_0 \varepsilon^{1-\delta}\right) = - \frac{z_0^2}{\sigma_{\alpha}^2+\sigma_{\beta}^2}.
\]
\end{prop}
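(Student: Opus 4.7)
The proof plan is to follow the template established in the proof of Proposition \ref{thm:main_equal_var}, replacing the $\ell^2$-based upper bound by the $\ell^1$ approximation \eqref{eq:sup_subresonant} that is available in the sub-resonant regime. Concretely, set $d(\varepsilon) := \varepsilon^{2(1-\delta)} t$, which by hypothesis tends to $0$, and fix $c > 10 z_0$ so that, thanks to \eqref{eq:LDP_L1},
\[
\P(|\alpha|+|\beta| \geq c\varepsilon^{-\delta}) \leq \exp\!\left(-100\,\frac{z_0^2\,\varepsilon^{-2\delta}}{\sigma_{\alpha}^2+\sigma_{\beta}^2}\right),
\]
which is negligible with respect to the target rate. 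Writing $\Ac(\lambda)=\{\sup_x |u(t,x)|\geq \lambda\}$ and $\Bc_1(\lambda)=\{|\alpha|+|\beta|\geq \lambda\}$, it then suffices to analyze $\P(\Ac(z_0\varepsilon^{1-\delta})\cap \Bc_1(c\varepsilon^{-\delta})^c)$, exactly as in \eqref{eq:fix_c}--\eqref{eq:step_L2}.

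On the event $\Bc_1(c\varepsilon^{-\delta})^c$, Proposition \ref{thm:approximation} applies (and the timescale restriction $t \lesssim \varepsilon^{-\frac{5}{2}(1-\delta)+\kappa}$ is automatic since $d(\varepsilon)\to 0$ forces $t \ll \varepsilon^{-2(1-\delta)}$). The formula \eqref{eq:sup_subresonant} then gives
\[
\sup_{x\in\T} |u(t,x)| = \varepsilon(|\alpha|+|\beta|) + \O(\varepsilon\, d(\varepsilon) (|\alpha|+|\beta|)) + \O(\varepsilon^{\frac{3}{2}(1-\delta)}).
\]
The key observation is that on $\Bc_1(c\varepsilon^{-\delta})^c$ both error terms are $o(\varepsilon^{1-\delta})$: the first is bounded by $c\, d(\varepsilon)\,\varepsilon^{1-\delta}$ with $d(\varepsilon)=o(1)$, and the second equals $\varepsilon^{\frac{1-\delta}{2}}\cdot \varepsilon^{1-\delta}$ which is also $o(\varepsilon^{1-\delta})$. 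Consequently, on this event,
\[
(1-o(1))\,\varepsilon(|\alpha|+|\beta|) - o(\varepsilon^{1-\delta}) \;\leq\; \sup_{x\in\T}|u(t,x)| \;\leq\; (1+o(1))\,\varepsilon(|\alpha|+|\beta|) + o(\varepsilon^{1-\delta}).
\]

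The upper bound then follows by writing
\[
\P(\Ac(z_0\varepsilon^{1-\delta})\cap \Bc_1(c\varepsilon^{-\delta})^c) \leq \P\bigl(|\alpha|+|\beta| \geq z_0\varepsilon^{-\delta} - o(\varepsilon^{-\delta})\bigr)
\]
and applying \eqref{eq:LDP_L1} with $\lambda = \varepsilon^{-\delta}$; the matching lower bound uses
\[
\P(\Ac(z_0\varepsilon^{1-\delta})\cap \Bc_1(c\varepsilon^{-\delta})^c) \geq \P\bigl(\{|\alpha|+|\beta| \geq z_0\varepsilon^{-\delta} + o(\varepsilon^{-\delta})\}\cap \Bc_1(c\varepsilon^{-\delta})^c\bigr),
\]
where the intersection with $\Bc_1(c\varepsilon^{-\delta})^c$ costs nothing at the exponential scale because $c > 10z_0$ makes $\Bc_1(c\varepsilon^{-\delta})$ exponentially rarer than $\Bc_1(z_0\varepsilon^{-\delta})$. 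One more application of \eqref{eq:LDP_L1} yields both $\limsup$ and $\liminf$ equal to $-z_0^2/(\sigma_\alpha^2+\sigma_\beta^2)$.

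There is no real obstacle here; the proof is essentially a routine verification that, in the sub-resonant regime $d(\varepsilon)\to 0$, the nonlinear correction to the $\ell^1$-norm profile is too small to affect the exponential rate. The only mild point of care is checking the error bookkeeping on the event $\Bc_1(c\varepsilon^{-\delta})^c$, so that the $\sin$-term in \eqref{eq:sup} is truly a lower-order perturbation uniformly on that set; this is precisely why the hypothesis is phrased as $\varepsilon^{2(1-\delta)} t \to 0$ rather than a pointwise polynomial restriction.
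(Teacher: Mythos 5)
Your proof is correct and takes essentially the same approach as the paper: the paper simply states that the proof is identical to that of \Cref{thm:main_equal_var}, with the upper bound step \eqref{eq:step_L2} replaced by an application of \eqref{eq:LDP_L1} via the sub-resonant approximation \eqref{eq:sup_subresonant}. You have supplied the routine bookkeeping (truncation to $\Bc_1(c\varepsilon^{-\delta})^c$, verification that the error terms are $o(\varepsilon^{1-\delta})$ on that event, and the two-sided application of \eqref{eq:LDP_L1}) that the paper leaves to the reader.
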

\begin{proof}
The proof is identical to that of \Cref{thm:main_equal_var}, the only difference being that \eqref{eq:step_L2} may be replaced by another application of \eqref{eq:LDP_L1} in view of \eqref{eq:sup_subresonant}.
\end{proof}

\subsubsection{Resonant timescales}

Our next result is the analysis of the tails of the distribution of the maximum for times $t= \tau \varepsilon^{-2(1-\delta)}$ for some $\tau\in (0,\infty)$. In this regime, we see a departure from the ``linear'' LDP obtained in \Cref{thm:LDP_diff_linear} due to strong resonances which cause a exchange of energy between the two main Fourier modes.

The best strategy to increase the probability of seeing an extreme wave is to concentrate most of the energy in the Fourier mode with the largest variance. However, the nonlinear dynamics occasionally obstruct this concentration. For long times, we will see that this mechanism is always possible.

\begin{thm}\label{thm:resonantLDP_diff_variance}
Fix $z_0>0$ and $\delta\in (0,1)$. Let $u$ be the solution to \eqref{eq:beating} where $\alpha$ and $\beta$ are complex, independent Gaussian random variables with zero mean and variance $\E |\alpha|^2 = \sigma_{\alpha}^2 >  \sigma_{\beta}^2 = \E |\beta|^2$. Then there exists a function $\Jc$ such that, for $t= \tau \varepsilon^{-2(1-\delta)}$ with $\tau\in (0,\infty)$,
\begin{equation}\label{eq:resonantLDP_diff_variance}
\liminf_{\varepsilon\rightarrow 0} \varepsilon^{2\delta} \log \P \left( \sup_{x\in\T} |u(t,x)| \geq z_0 \varepsilon^{1-\delta}\right) \geq - \frac{\Jc (z_0,\tau)^2}{\sigma_{\alpha}^2},
\end{equation}
where $\Jc (z_0,\tau)$ is a c\`adl\`ag function which takes values in the interval $[z_0/\sqrt{2}, z_0]$ and has an infinite countable number of isolated jump discontinuities $(\tau_j^{\infty}(z_0))_{j\in\N}$. When $\tau$ is a point of continuity, $\Jc (z_0,\tau)$ is the minimal solution to the following implicit equation
\begin{equation}\label{eq:implicit_J}
z_0= \Jc (z_0,\tau)\, \left( |\cos\left(2\tau\, \Jc (z_0,\tau)^2\right)| + |\sin\left(2\tau \,\Jc (z_0,\tau)^2\right)|\right).
\end{equation}
When $\tau$ is a jump point, $\Jc (z_0,\tau):= \lim_{s\rightarrow \tau^{+}} \Jc (z_0,s)$.

Moreover, 
\begin{itemize}
\item $\Jc (z_0,\tau)$ is continuous for $\tau \in (0, \frac{\pi}{4\,z_0^2})$ and
\begin{equation}\label{eq:tau_zero_limit}
\lim_{\tau\rightarrow 0^{+}} \Jc (z_0,\tau) = z_0.
\end{equation}
\item The following limit exists
\begin{equation}\label{eq:tau_infty_limit}
\lim_{\tau\rightarrow +\infty} \Jc (z_0,\tau)=\frac{z_0}{\sqrt{2}}
\end{equation}
\item The jump points of $\Jc$, $(\tau_j^{\infty}(z_0))_{j\in\N}\subset ( \frac{\pi}{4\,z_0^2},\infty)$, are increasing in $j\in\N$ and they satisfy the following asymptotic formula:
\begin{equation}\label{eq:asymp_jumps}
\tau_j^{\infty}(z_0) = \frac{\pi}{2\,z_0^2}\, \left( j - \frac{1}{2}\right) + \O\left( \frac{1}{j\, z_0^2}\right)  \qquad\mbox{as}\ j\rightarrow\infty.
\end{equation}
\end{itemize}
\end{thm}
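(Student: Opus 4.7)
The plan is to reduce the tail-probability bound to a Laplace-type analysis of an explicit Gaussian double integral and then to extract the exponential decay rate from the implicit equation \eqref{eq:implicit_J}. Applying \Cref{thm:approximation}, I would start from the representation \eqref{eq:sup}, valid with overwhelming probability on $\{|\alpha|+|\beta|\leq c\,\varepsilon^{-\delta}\}$ for $c$ large. Rescaling by $a=\varepsilon^\delta|\alpha|$, $b=\varepsilon^\delta|\beta|$ and using that $|\alpha|$, $|\beta|$ are independent Rayleigh variables, one obtains (up to a negligible error from the complement of that event) the Gaussian integral representation \eqref{eq:Gaussint_intro} of $\P(\sup_x|u|\ge z_0\varepsilon^{1-\delta})$ over the set $\Ac(\tau,z_0)$. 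The $\liminf$ bound therefore reduces to exhibiting a subset of $\Ac(\tau,z_0)$ whose Gaussian mass is at least $\varepsilon^N\exp\bigl(-\varepsilon^{-2\delta}\Jc(z_0,\tau)^2/\sigma_\alpha^2\bigr)$ for some fixed $N$ depending on $\delta$.

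The good subset I would use is the rectangle $[\Jc(z_0,\tau),\Jc(z_0,\tau)+\Delta]\times[0,\varepsilon]$, where $\Delta>0$ is chosen smaller than the distance from $\Jc(z_0,\tau)$ to the next root of $F(\tau,\cdot,z_0)$. To show that this rectangle is contained in $\Ac(\tau,z_0)$ it suffices to observe that on the slab $b\in[0,\varepsilon]$ the phase $2\tau(a^2+b^2)$ differs from $2\tau a^2$ only by $O(\varepsilon^2)$, so that the defining inequality of $\Ac(\tau,z_0)$ collapses to $F(\tau,a,z_0)\ge 0$ up to a negligible perturbation; this inequality holds throughout $[\Jc(z_0,\tau),\Jc(z_0,\tau)+\Delta]$ by construction. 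Integrating the Gaussian density on this rectangle (using that $\exp(-\varepsilon^{-2\delta}b^2/\sigma_\beta^2)\ge e^{-1}$ for $b\in[0,\varepsilon]$ since $\delta<1$) yields a lower bound of the form $\varepsilon^N\exp\bigl(-\varepsilon^{-2\delta}\Jc(z_0,\tau)^2/\sigma_\alpha^2\bigr)$, which proves \eqref{eq:resonantLDP_diff_variance} at continuity points of $\Jc$. At a jump $\tau=\tau_j^\infty(z_0)$ I would use the right-continuous definition $\Jc(z_0,\tau):=\lim_{s\downarrow\tau}\Jc(z_0,s)$: the rectangle construction applies for every $s>\tau_j^\infty(z_0)$, and passing $s\downarrow\tau_j^\infty(z_0)$ recovers the stated bound at the jump itself.

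The remainder of the proof is the analysis of the implicit equation $a(|\cos(2\tau a^2)|+|\sin(2\tau a^2)|)=z_0$. Since $g(\theta):=|\cos\theta|+|\sin\theta|\in[1,\sqrt{2}]$, every solution satisfies $a\in[z_0/\sqrt{2},z_0]$, establishing the stated range of $\Jc$. At $\tau=0$ the equation reduces to $a=z_0$; for $\tau\in(0,\pi/(4z_0^2))$ the range of $\theta=2\tau a^2$ on $[z_0/\sqrt{2},z_0]$ stays strictly below $\pi/2$, so the LHS is smooth in $a$ (no cusp of $g$ is crossed) and the implicit function theorem yields continuity of the minimal root on this interval together with the limit \eqref{eq:tau_zero_limit}. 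For larger $\tau$ the LHS develops non-trivial local maxima near the smooth peaks $\theta_k=\pi/4+k\pi/2$ of $g$; between consecutive jumps the minimal root varies smoothly, and at a jump it collides with the next-smallest root and disappears, producing the càdlàg structure.

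The main obstacle, carried out in \Cref{sec:implicit}, is to locate the jumps and to prove the asymptotic \eqref{eq:asymp_jumps} together with \eqref{eq:tau_infty_limit}. A jump is a tangency $F(\tau,a,z_0)=\partial_a F(\tau,a,z_0)=0$; writing $\theta=2\tau a^2$ and solving $g(\theta)+2\theta g'(\theta)=0$ locally near the smooth peak $\theta_k$ of $g$ yields a critical point shifted by $\eta_k\sim 1/(k\pi)$ and a tangency location $a_k^*=z_0/\sqrt{2}+O(1/k)$; substituting back into $F=0$ then gives $\tau_k^\infty(z_0)=\theta_k/z_0^2+O(1/(k z_0^2))$, which rearranges to \eqref{eq:asymp_jumps}. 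The same perturbative expansion traps the minimal root in shrinking neighbourhoods of $z_0/\sqrt{2}$ as $\tau\to\infty$, proving \eqref{eq:tau_infty_limit}, and it supplies the uniform lower bound on the distance between consecutive roots of $F$ needed to keep $\Delta$ uniformly positive away from the jump points.
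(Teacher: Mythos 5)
Your overall strategy coincides with the paper's (normal-form approximation \eqref{eq:sup}, Gaussian integral over $\Ac(\tau,z_0)$, a thin rectangle inside $\{F\geq 0\}\times[0,\varepsilon]$, tangency analysis of the implicit equation), and your tangency expansion near the peaks of $|\cos|+|\sin|$ does reproduce \eqref{eq:asymp_jumps}. However, there are two concrete gaps. First, your treatment of jump points is not valid: proving the bound at times $s\varepsilon^{-2(1-\delta)}$ for every $s>\tau_j^{\infty}(z_0)$ and then ``passing $s\downarrow\tau_j^{\infty}$'' says nothing about the $\liminf_{\varepsilon\to0}$ at the time $t=\tau_j^{\infty}\varepsilon^{-2(1-\delta)}$ itself, because for each fixed $\varepsilon$ these are different events and no continuity in $\tau$ of the rate is available --- indeed the whole point is that the bound jumps there. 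The argument must be run directly at the jump time; the paper does this (\Cref{thm:LDP_resonant2}) by replacing the degenerate interval between the two smallest roots (which have just collided) by the interval between the third and fourth smallest roots, whose left endpoint is exactly $\Jc(z_0,\tau)$ under the right-continuous definition \eqref{eq:def_J}. Your rectangle construction adapts to this with no extra work, but the limiting-in-$s$ route as written does not.

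Second, your claim that on the slab $b\in[0,\varepsilon]$ the membership condition for $\Ac$ ``collapses to $F(\tau,a,z_0)\geq 0$ up to a negligible perturbation'' and that this ``holds throughout $[\Jc,\Jc+\Delta]$ by construction'' conflates the perturbed and unperturbed thresholds. The event $\{\sup_x|u|\geq z_0\varepsilon^{1-\delta}\}$ corresponds, after \eqref{eq:sup}, to a threshold $z_0+\O(\varepsilon^{(1-\delta)/2})$, and the reduction from the two-square-root condition to $a\,h(2\tau a^2)\geq\lambda$ costs an additional $\O(\sqrt{\tau}\,\varepsilon)+\O(\sqrt{\varepsilon})$ (not $\O(\varepsilon^2)$: where $\cos(2\tau a^2)$ or $\sin(2\tau a^2)$ nearly vanishes the square roots are not Lipschitz, which is why the paper's \Cref{thm:inclusion} splits into two regimes). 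Since the left endpoint of your rectangle lies exactly on the zero level set of $F(\cdot,\cdot,z_0)$, these $o(1)$ shifts cannot be absorbed ``by construction'': you end up with the rate $y(\tilde{\lambda};\tau)^2/\sigma_{\alpha}^2$ for a perturbed parameter $\tilde{\lambda}=z_0+o(1)$, and identifying this with $\Jc(z_0,\tau)^2/\sigma_{\alpha}^2$ requires the stability of the minimal root and of the collision times under perturbation of $\lambda$ (the paper's \Cref{thm:perturb_lambda} and \Cref{thm:perturb_lambda_2}, which rest on $\pa_y F\neq 0$ away from collisions). Relatedly, your continuity claim on $(0,\pi/(4z_0^2))$ needs more than smoothness of $g$ below the first cusp: $g(\theta)+2\theta g'(\theta)$ does vanish in $(\pi/4,\pi/2)$, so you must check that the minimal root keeps $\theta=2\tau a^2\leq\pi/4$ there (or argue nondegeneracy otherwise) before invoking the implicit function theorem; the full c\`adl\`ag structure (births only at $\widetilde{\tau}_j$, deaths only by collisions, which branches collide) is asserted in your sketch but is the bulk of the work in \Cref{sec:implicit}.
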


\begin{rk} We highlight that we do not expect \eqref{eq:resonantLDP_diff_variance} to be sharp. Indeed, for general $t$ the lower bound coming from \eqref{eq:general_lower_bound} and \eqref{eq:LDP_L1} yields:
\begin{equation}
\liminf_{\varepsilon\rightarrow 0} \varepsilon^{2\delta} \log \P \left( \sup_{x\in\T} |u(t,x)| \geq z_0 \varepsilon^{1-\delta}\right) \geq - \frac{z_0^2}{\sigma_{\alpha}^2+\sigma_{\beta}^2}.
\end{equation}
This is sharper than \eqref{eq:resonantLDP_diff_variance} for small $\tau>0$ in view of \eqref{eq:tau_zero_limit}. However, we do expect \eqref{eq:resonantLDP_diff_variance} to be sharp as $\tau\rightarrow\infty$ thanks to \eqref{eq:tau_infty_limit} and the upper bound given by \eqref{eq:general_upper_bound} and \eqref{eq:LDP_L2}.
\end{rk}

\Cref{sec:implicit} is devoted to the study of the function $\Jc (z_0,\tau)$, including the proofs of \eqref{eq:implicit_J}-\eqref{eq:asymp_jumps}. The proof of \eqref{eq:resonantLDP_diff_variance} is the content of \Cref{thm:LDP_resonant} and \Cref{thm:LDP_resonant2}.

\subsubsection{Super-resonant times}

Our next result is the analysis of the tails of the distribution of the maximum for times $t=t(\varepsilon)\gg  \varepsilon^{-2(1-\delta)}$, i.e. $\lim_{\varepsilon\rightarrow 0^{+}} t(\varepsilon) \varepsilon^{2(1-\delta)} = \infty$. For all such timescales, the tails of the distribution stabilize around a new constant value larger than the one obtained in the sub-resonant regime $t\ll \varepsilon^{-2(1-\delta)}$.

\begin{thm}\label{thm:super_resonantLDP_diff_variance}
Fix $z_0>0$,  $\delta\in (0,1)$ and $\kappa>0$ as small as desired. Let $u$ be the solution to \eqref{eq:beating} where $\alpha$ and $\beta$ are complex, independent Gaussian random variables with zero mean and variance $\E |\alpha|^2 = \sigma_{\alpha}^2 >  \sigma_{\beta}^2 = \E |\beta|^2$. Suppose that $ \varepsilon^{-2(1-\delta)}\ll t\lesssim \varepsilon^{-\frac{5}{2}(1-\delta)+\kappa}$. Then 
\begin{equation}\label{eq:super_resonantLDP_diff_variance}
\lim_{\varepsilon\rightarrow 0} \varepsilon^{2\delta} \log \P \left( \sup_{x\in\T} |u(t,x)| \geq z_0 \varepsilon^{1-\delta}\right) = -\frac{z_0^2}{2\sigma_{\alpha}^2}.
\end{equation}
\end{thm}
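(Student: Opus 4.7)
The plan is to prove matching upper and lower bounds on $\varepsilon^{2\delta}\log P(\sup_x |u(t,x)|\geq z_0\varepsilon^{1-\delta})$. For the upper bound, I would follow the truncation recipe from \Cref{thm:main_equal_var}: fix $c>10z_0$ and excise the event $\{|\alpha|+|\beta|>c\varepsilon^{-\delta}\}$, which is negligible at the LDP scale by \eqref{eq:LDP_L1}. On the complementary event \Cref{thm:approximation} applies throughout the allowed time range $t\lesssim\varepsilon^{-\frac{5}{2}(1-\delta)+\kappa}$, and the pointwise inequality \eqref{eq:general_upper_bound} combined with the $\ell^2$ LDP \eqref{eq:LDP_L2}---using $\max\{\sigma_\alpha^2,\sigma_\beta^2\}=\sigma_\alpha^2$---immediately gives $\limsup\varepsilon^{2\delta}\log P \leq -z_0^2/(2\sigma_\alpha^2)$.

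For the matching lower bound, I would again localize to $\{|\alpha|+|\beta|\leq c\varepsilon^{-\delta}\}$ and use the explicit formula \eqref{eq:sup}. After rescaling $|\alpha|=\varepsilon^{-\delta}a$ and $|\beta|=\varepsilon^{-\delta}b$, the desired probability reduces (up to exponentially smaller errors) to the two-dimensional Gaussian integral \eqref{eq:Gaussint_intro} over the set $\mathcal{A}(\tau,z_0)$ with $\tau=t\varepsilon^{2(1-\delta)}\to\infty$. Following the strategy articulated around \eqref{eq:intregion_intro}, I would restrict the integration region to the product set
\[
\bigl([a_1(\tau,z_0),\,a_2(\tau,z_0)] \cup [a_3(\tau,z_0),\,a_4(\tau,z_0)]\bigr) \times [0,\varepsilon],
\]
where $0<a_1<a_2<a_3<a_4$ are the four smallest positive roots of the implicit function $F(\tau,\cdot,z_0)$ from \eqref{eq:implicit_intro}. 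On this region the $b$-integral is explicit and of order $\varepsilon^{2-2\delta}$, and the $a$-integral telescopes to
\[
\sum_{i\in\{1,3\}} e^{-a_i^2\varepsilon^{-2\delta}/\sigma_\alpha^2}\bigl(1-e^{-(a_{i+1}^2-a_i^2)\varepsilon^{-2\delta}/\sigma_\alpha^2}\bigr).
\]
The decisive inputs from \Cref{sec:implicit} are then the convergences $a_1(\tau,z_0),a_3(\tau,z_0)\to z_0/\sqrt{2}$ as $\tau\to\infty$, which fix the correct exponential rate, together with the spacing bound $|a_2-a_1|+|a_4-a_3|\gtrsim_{z_0}\tau^{-2}\gtrsim\varepsilon^{1-\delta-2\kappa}$, which ensures at least one telescoping factor $(1-e^{-x})$ is bounded below by a positive power of $\varepsilon$. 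Since $\varepsilon^{2\delta}\log(\varepsilon^M)=o(1)$ for any fixed $M$, all such polynomial prefactors are invisible at the LDP scale, and one concludes $\liminf\varepsilon^{2\delta}\log P \geq -z_0^2/(2\sigma_\alpha^2)$.

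The principal obstacle I anticipate is not the final Laplace-type computation per se but the uniform verification that the product region $([a_1,a_2]\cup[a_3,a_4])\times[0,\varepsilon]$ genuinely sits inside $\mathcal{A}(\tau,z_0)$ (after absorbing the $O(\varepsilon^{3(1-\delta)/2})$ error in \eqref{eq:sup}), together with the simultaneous quantitative control of the four roots $a_1,\ldots,a_4$---both their proximity to $z_0/\sqrt{2}$ and their mutual spacings---throughout the super-resonant range $\tau\in[\tau_0,\varepsilon^{-\frac{1}{2}(1-\delta)+\kappa}]$. This is precisely what a one-interval argument based only on the minimal root $Y(\tau,z_0)$ cannot deliver: that strategy collapses at the jump points $\tau_j^\infty(z_0)$ where the two smallest roots coalesce, whereas the four-root construction is designed to survive arbitrarily close to every such jump.
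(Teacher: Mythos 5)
Your proposal is correct and follows essentially the same route as the paper's proof: the upper bound via the truncation to $\{|\alpha|+|\beta|\lesssim \varepsilon^{-\delta}\}$, the pointwise bound \eqref{eq:general_upper_bound} and the $\ell^2$ LDP \eqref{eq:LDP_L2}; the lower bound via the four-root construction $([a_1,a_2]\cup[a_3,a_4])\times[0,\varepsilon]$, the spacing estimate from \Cref{thm:asymp_increment}, and the convergence $y_{2j\pm1}\to\lambda/\sqrt{2}$ from \Cref{thm:Y_inf_limit}. The one step you flag but do not carry out---verifying that the product region sits inside $\Ac(\tau,\lambda)$ after absorbing the $\O(\varepsilon^{\frac{3}{2}(1-\delta)})$ error---is exactly what the paper resolves in \Cref{thm:inclusion} by shifting the threshold to $\tilde\lambda = z_0 - C_1\sqrt{\tau}\,\varepsilon - C_2\varepsilon^{\frac{1}{2}(1-\delta)} - \varepsilon^{1/2}$ and working with the roots of $F(\tilde\lambda;\tau,\cdot)$ rather than $F(z_0;\tau,\cdot)$; since $\tilde\lambda\to z_0$, the rates still converge to $-z_0^2/(2\sigma_\alpha^2)$.
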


We postpone the proof of this theorem to \Cref{sec:super_res_LDP}.

\section{Analysis of the implicit equation}\label{sec:implicit}

\subsection{Existence of solutions}

For fixed $\lambda>0$,  consider the following implicit equation for $(\tau,y)$:
\begin{equation}\label{eq:def_h}
\lambda= y\, h(2\tau y^2) \qquad \mbox{where} \qquad h(\xi) := |\cos (\xi)| + |\sin (\xi)|. 
\end{equation}

In order to show that a solution to \eqref{eq:def_h} exists, we rewrite this as the level set $F(\tau,y)=0$ for 
\begin{equation}\label{eq:def_F}
F(\tau, y)= y\, h(2\tau y^2)-\lambda.
\end{equation}

\begin{lem}\label{thm:existence} Fix $\lambda>0$, then the following hold:
\begin{enumerate}[(i)]
\item For each $\tau\geq 0$, there exists at least one $y=y (\tau)$ such that 
\[
F(\tau,y(\tau))=0.
\]
\item For each $\tau\geq 0$, all solutions $y$ to $F(\tau,y)=0$ must live in the interval $\left[ \lambda/\sqrt{2},\lambda\right]$.
\item There exists some $\rho>0$ such that the solution $y(\tau)$ is unique and differentiable in $\tau$ for all $\tau\in (0,\rho)$ and Lipschitz for $\tau \in (-\rho,\rho)$. Moreover 
\[
\lim_{\tau\rightarrow 0^{+}} y(\tau)= \lambda.
\]
\end{enumerate}
\end{lem}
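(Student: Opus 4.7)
The plan rests on two elementary properties of the auxiliary function $h$. Expanding the square, $h(\xi)^2 = 1 + |\sin(2\xi)|$, so $1 \leq h(\xi) \leq \sqrt{2}$ on all of $\R$, with equality $h(0) = 1$; moreover, $h$ is globally Lipschitz and is $C^\infty$ off the zeroes of $\sin$ and $\cos$. Writing $G(\tau,y) := y\,h(2\tau y^2)$, the uniform bound on $h$ yields $y \leq G(\tau,y) \leq \sqrt{2}\,y$, which forces any root of $F(\tau,\cdot) = G(\tau,\cdot) - \lambda$ to satisfy $y \in [\lambda/\sqrt{2},\lambda]$, giving (ii). Since $G(\tau,\lambda/\sqrt{2}) \leq \lambda \leq G(\tau,\lambda)$ and $G(\tau,\cdot)$ is continuous, (i) follows from the intermediate value theorem for every $\tau \geq 0$.

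For (iii), I restrict to $y \in [\lambda/\sqrt{2},\lambda]$ and $|\tau| \leq \rho$ with $\rho$ chosen so small that $\xi = 2\tau y^2$ remains in $(-\pi/4,\pi/4)$. On this range $h$ is piecewise smooth with two branches: $h(\xi) = \cos\xi + \sin\xi$ for $\xi \geq 0$ and $h(\xi) = \cos\xi - \sin\xi$ for $\xi \leq 0$. Applying the implicit function theorem to each branch separately at $(\tau,y) = (0,\lambda)$, where $\pa_y F(0,\lambda) = h(0) = 1 \neq 0$, produces a $C^1$ curve $y(\tau)$ on $(0,\rho)$ from the first branch and on $(-\rho,0)$ from the second, each tending to $\lambda$ as $\tau \to 0$. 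This yields both the differentiability on $(0,\rho)$ and the limit $\lim_{\tau \to 0^+} y(\tau) = \lambda$. A direct computation on each smooth branch gives
\[
\pa_y G(\tau,y) = h(2\tau y^2) + 4\tau y^2\, h'(2\tau y^2),
\]
and since $|h'| \leq \sqrt{2}$ where defined, $\pa_y G \geq 1 - 4\sqrt{2}\,|\tau|\,\lambda^2 \geq \tfrac{1}{2}$ as soon as $\rho \leq (16\lambda^2)^{-1}$. Consequently $G(\tau,\cdot)$ is strictly increasing on $[\lambda/\sqrt{2},\lambda]$ for every such $\tau$, upgrading the local IFT uniqueness to uniqueness on the whole interval.

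Finally, to glue the two one-sided $C^1$ pieces into a single Lipschitz function on $(-\rho,\rho)$, I combine this monotonicity bound with the analogous bound $|\pa_\tau G(\tau,y)| = |2y^3\, h'(2\tau y^2)| \leq 2\sqrt{2}\,\lambda^3$. Writing the identity $G(\tau_1,y(\tau_1)) - G(\tau_1,y(\tau_2)) = G(\tau_2,y(\tau_2)) - G(\tau_1,y(\tau_2))$ and applying the mean value theorem to $G(\tau_1,\cdot)$ (which is smooth on $[\lambda/\sqrt{2},\lambda]$ since $2\tau_1 y^2$ has fixed sign whenever $\tau_1 \neq 0$, and is trivially smooth at $\tau_1 = 0$), I obtain $|y(\tau_1) - y(\tau_2)| \leq 4\sqrt{2}\,\lambda^3\,|\tau_1 - \tau_2|$, the desired Lipschitz bound. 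The only mildly delicate point is the failure of $h$ to be $C^1$ at $\xi = 0$: this prevents a single two-sided application of the implicit function theorem on a neighbourhood of $\tau = 0$ and forces the branch-by-branch argument above, with the quantitative monotonicity reconciling the two smooth pieces into a Lipschitz selection. I expect this corner behaviour to be the main obstacle in the proof.
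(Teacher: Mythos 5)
Your proofs of (i) and (ii) match the paper's exactly: both rest on the observation $1\le h\le\sqrt{2}$ and the intermediate value theorem, with the endpoint inequalities pinning the root to $[\lambda/\sqrt2,\lambda]$. For (iii), you take a genuinely more elementary route. The paper proves the two one-sided limits of $\partial_y F$ at $(0,\lambda)$ exist and are nonzero and then invokes the implicit function theorem for Lipschitz maps (Clarke's nonsmooth IFT) to obtain a locally unique Lipschitz solution curve. You avoid the citation entirely by choosing $\rho$ small enough that $\xi=2\tau y^2$ stays in a single period, writing $h$ explicitly on its two smooth branches, applying the classical $C^1$ IFT on each branch separately to get smoothness on $(0,\rho)$ and $(-\rho,0)$, and then reconciling the two pieces via the quantitative monotonicity $\partial_y G\ge\tfrac12$ together with the Lipschitz bound on $\tau\mapsto G(\tau,y)$, which yields an explicit Lipschitz constant $\lesssim\lambda^3$ across $\tau=0$. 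The paper's use of Clarke's theorem is shorter and generalizes immediately to other corner points $\xi=j\pi/2$ (which is why they set it up that way — the same nonsmooth IFT reappears implicitly in Proposition 4.4), whereas your argument is self-contained and produces a concrete Lipschitz constant. Both are correct; yours is slightly longer but requires no external citation. Two small cosmetic points: the optimal Lipschitz bound on $h$ is $|h'|\le 1$ (since $(h')^2+h^2\equiv 2$ on smooth pieces and $h\ge 1$), not $\sqrt2$, though this only loosens your constants; and the identity $G(\tau_1,y(\tau_1))-G(\tau_1,y(\tau_2))=G(\tau_2,y(\tau_2))-G(\tau_1,y(\tau_2))$ is just the statement that both sides equal $\lambda-G(\tau_1,y(\tau_2))$, which is worth saying outright.
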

\begin{proof}
{\bf \emph{ (i)}} Note that the function $h$ in \eqref{eq:def_h} ranges between $1$ and $\sqrt{2}$. Therefore
\begin{equation}\label{eq:F_Bolzano_up}
F(\tau,\lambda)= \lambda h(2\tau \lambda^2) - \lambda\geq 0,
\end{equation}
while
\begin{equation}\label{eq:F_Bolzano_low}
F\left(\tau,\frac{\lambda}{\sqrt{2}}\right)= \frac{\lambda}{\sqrt{2}}\,  h(\tau \,\lambda^2) - \lambda\leq 0.
\end{equation}
Given that $F$ is continuous in $y$, there must exist at least one $y\in \left[ \frac{\lambda}{\sqrt{2}} ,\lambda\right]$ such that $F(\tau,y)=0$.

{\bf \emph{(ii)}}  Following the same reasoning, if $y< \lambda/\sqrt{2}$ then \eqref{eq:F_Bolzano_low} is a strict inequality.
An analogous reasoning shows that $y> \lambda$ cannot be a solution.

{\bf \emph{(iii)}} By plugging in $\tau=0$ in \eqref{eq:def_h}, we quickly find that $F(0,y)=y-\lambda=0$. This implies that $y=\lambda$. 

Next notice that the function $h(\xi)$ in \eqref{eq:def_h} is differentiable as long as $\xi\neq\frac{j\,\pi}{2}$ for $j\in \Z$.
However, $h$ is certainly Lipschitz, and the right-derivative and left-derivatives of $h$ do exist at such points. Note that
\[
\frac{\pa F}{\pa y} (\tau, y)= h(2\tau y^2)+h'(2\tau y^2)\, 4\tau\, y^2,
\]
for any $\tau\neq 0$ in a small neighborhood of $(\tau,y)=(0,\lambda)$. In particular, one may take $\tau\rightarrow 0$ and show that $\frac{\pa F}{\pa y} (0, \lambda) =1$ exists.

By the Implicit Function theorem for Lipschitz functions \cite{Clarke1,Clarke2}, there exists a neighborhood of $(\tau,y)=(0,\lambda)$ where the solution curve to $F(\tau,y)=0$ can be uniquely expressed as $F(\tau,y(\tau))$ for a Lipschitz function $y(\tau)$.
\end{proof}

Despite the solution to $F(\tau,y)=0$ admitting a parametrization $y=y(\tau)$ for small $\tau$, this parametrization is not global in $\tau$ as $y'$ blows up quickly. That being said, the $2-$dimensional curve $F(\tau,y)=0$ admits a nice global parametrization with respect to slightly different variables.

Let $\xi=2\,  \tau\, y^2$. Let us introduce the curve:
\begin{equation}\label{eq:def_F2}
\widetilde{F} (\tau,\xi)= \xi \, h(\xi)^2 -  2\tau\, \lambda^2.
\end{equation}
We note that the curve $\widetilde{F} (\tau,\xi)=0$ is equivalent to $F(\tau,y)=0$ for all $\tau,y\in (0,\infty)$. Moreover, we have the following result:

\begin{lem}\label{thm:curve_tau_xi} Fix $\lambda>0$. The curve $\{ (\tau,\xi)\in (0,\infty)^2 \mid \widetilde{F} (\tau,\xi)=0\}$ admits a global parametrization
\[
\begin{split}
\xi \in (0,\infty)\quad  \longmapsto \quad \tau(\xi)= \frac{\xi \, h(\xi)^2}{2\,\lambda^2}.
\end{split}
\]
which is locally Lipschitz and has Lipschitz local inverse. In particular, this curve (and thus $F=0$) is connected and locally Lipschitz.
\end{lem}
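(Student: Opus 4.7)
The idea is to solve the implicit equation $\widetilde{F}(\tau,\xi)=0$ algebraically for $\tau$ and then transfer regularity properties from the auxiliary function $h$. On the region $(\tau,\xi)\in(0,\infty)^2$, the equation $\xi h(\xi)^2-2\tau\lambda^2=0$ is literally equivalent to $\tau=\xi h(\xi)^2/(2\lambda^2)$, and since $h\geq 1$ forces $\tau(\xi)>0$ whenever $\xi>0$, the zero set of $\widetilde{F}$ in $(0,\infty)^2$ coincides exactly with the graph $\{(\tau(\xi),\xi):\xi>0\}$. This already verifies the explicit formula and identifies the curve as a global graph over $\xi$.

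For local Lipschitz regularity, I rely on the observation that $h(\xi)=|\cos\xi|+|\sin\xi|$ is a sum of compositions of globally Lipschitz functions and is therefore globally Lipschitz on $\mathbb{R}$; combined with the two-sided bound $1\leq h\leq\sqrt{2}$, this makes $h^2$ globally Lipschitz as well. Hence $\xi\mapsto \xi\,h(\xi)^2$, and therefore $\tau(\xi)$, is locally Lipschitz as a product of a smooth function with a bounded globally Lipschitz function. The parametrization $\Phi:\xi\mapsto(\tau(\xi),\xi)$ then inherits this regularity as a map into $\mathbb{R}^2$, while its inverse is nothing but the projection $(\tau,\xi)\mapsto\xi$, which is $1$-Lipschitz on all of $\mathbb{R}^2$. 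Connectedness is immediate, since the curve is the continuous image of the connected interval $(0,\infty)$ under $\Phi$.

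To transfer these properties to $\{F(\tau,y)=0\}\subset(0,\infty)^2$ and hence close the last sentence of the lemma, I plan to use the algebraic factorization
\[
\widetilde{F}(\tau,2\tau y^2)=2\tau\,\bigl(y\,h(2\tau y^2)-\lambda\bigr)\bigl(y\,h(2\tau y^2)+\lambda\bigr).
\]
For $\tau,y>0$ the last factor is strictly positive (since $h\geq 1$ and $\lambda>0$), so the smooth map $(\tau,y)\mapsto(\tau,\xi=2\tau y^2)$ is a locally bi-Lipschitz bijection between $\{F=0\}$ and $\{\widetilde{F}=0\}$ in $(0,\infty)^2$, and transports both connectedness and the local Lipschitz graph structure.

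The proof is essentially computational and I do not anticipate a serious analytical obstacle; the one subtlety to be careful about is that $\tau(\xi)$ is not globally monotone in $\xi$ (as will actually be exploited later to produce the jump points of $\mathcal{J}$), so the ``Lipschitz local inverse'' of the parametrization must be understood as the coordinate projection $(\tau,\xi)\mapsto\xi$ rather than as a map $\tau\mapsto\xi$. Keeping the two coordinate systems $(\tau,y)$ and $(\tau,\xi)$ properly synchronized in the final transfer step is the only piece requiring moderate care.
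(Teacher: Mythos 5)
Your proposal is correct and follows essentially the same route as the paper: solve $\widetilde F=0$ explicitly for $\tau(\xi)$, deduce local Lipschitz regularity and connectedness from the Lipschitz continuity of $h$ (with the projection $(\tau,\xi)\mapsto\xi$ as the Lipschitz inverse), and transfer to $\{F=0\}$ via the smooth change of coordinates $(\tau,\xi)\leftrightarrow(\tau,y)$ on the open quadrant. Your factorization $\widetilde F(\tau,2\tau y^2)=2\tau\,(y\,h(2\tau y^2)-\lambda)(y\,h(2\tau y^2)+\lambda)$ is just a slightly more explicit justification of the zero-set correspondence that the paper asserts directly through its $C^1$ change of variables.
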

\begin{proof}
The parametrization comes from explicitly solving $\widetilde{F}=0$. The Lipschitz property follows from that of $h$. Moreover, $\tau((0,\infty))$ must be connected by the continuity of $\tau(\xi)$. These properties are inherited by $F=0$ thanks to the fact that the change of coordinates $(\tau,\xi)\mapsto (\tau,y)=(\tau, \sqrt{\xi /(2\tau)})$ is a $C^1$ bijection with $C^1$ inverse in the domain $\tau,\xi>0$.
\end{proof}

\subsection{Analysis of multiple solutions}

While the curve $\widetilde{F} (\tau,\xi)=0$ behaves well, there are various issues when we try to view it under $(\tau,y(\tau))$ coordinates due to the fact that $y(\tau)$ is multi-valued at various points (multiple solutions to $F(\tau,y)=0$ exist for fixed $\tau$).

Some of these points correspond to values of $\xi$ where $h$ is not differentiable (only Lipschitz). At such points two new solutions $y(\tau)$ are ``born'' with different initial derivatives. The rest of these points correspond to values where $y'$ blows up for a solution curve. These correspond to collisions between two solutions which cease to exist at that point. In particular, the function $\Jc (\lambda,\tau)$ introduced in \eqref{thm:resonantLDP_diff_variance} will have jumps at such points.

In order to justify this description, let us introduce the following sequence of times. For each $j\in\N$, let
\begin{equation}\label{eq:def_tau}
\tau_j := \frac{\pi}{2\,\lambda^2}\, \left( j - \frac{1}{2}\right), \qquad\qquad \widetilde{\tau}_j := \frac{\pi}{4\,\lambda^2}\,j\ .
\end{equation}
Note that $\tau_j =  \widetilde{\tau}_{2j-1}$ and $\widetilde{\tau}_{2j}=(\tau_j+\tau_{j+1})/2$ for all $j\in\N$.

We have the following result:

\begin{prop}\label{thm:bifurcations} Consider the equation $F(\tau,y)=0$ in \eqref{eq:def_F} for $\tau>0$. 
\begin{enumerate}[(i)]
\item For $\tau \in (0,\widetilde{\tau}_1)$, there exists a unique solution to $F(\tau,y)=0$, which is $C^1$ and can be parametrized as $y_1 (\tau)$.
\item At each $\widetilde{\tau}_j$, $j\in\N$, two new solutions to $F(\tau, y)=0$ are born, both of which can be parametrized in terms of $\tau$. We denote these solutions by $y_{2j}(\tau)$ and $y_{2j+1}(\tau)$ for $\tau\geq \widetilde{\tau}_j$. They satisfy $y_{2j}(\widetilde{\tau}_j)  =y_{2j+1}(\widetilde{\tau}_j)=\lambda$ and
\begin{equation}\label{eq:birth_derivative}
\lim_{\tau\rightarrow \widetilde{\tau}_j^{+}} y_{2j}'(\tau)  = \frac{-2\,\lambda^3}{\pi j-1}<0, \qquad
\lim_{\tau\rightarrow \widetilde{\tau}_j^{+}} y_{2j+1}'(\tau)  = \frac{-2\,\lambda^3}{\pi j+1}<0.
\end{equation}
\item There is no other time at which solutions are born except $\widetilde{\tau}_j$, $j\in\N$.
\item $y_{2j}(\tau)$ is decreasing for all times of existence and $y_{2j}(\tau)<y_{2j+1}(\tau)$ for all $\tau>\widetilde{\tau}_j$ where both solutions exist.
\item For $\tau \in (\widetilde{\tau}_{j-1}, \tau_j)$, $j\in\N$, the solution $y_{2j-1}(\tau)$ is decreasing. At $\tau_j$, $y_{2j-1}(\tau)$ reaches $\lambda/\sqrt{2}$ and starts to increase.
\item For each $j\in\N$, there exists a time $\tau_j^{\infty} \in (\tau_j, \tau_{j+1})$ where $y_{2j-1}$ collides with $y_{2j}$, i.e. $y_{2j-1}(\tau_j^{\infty})=y_{2j}(\tau_j^{\infty})$, $y'_{2j-1}(\tau_j^{\infty})=+\infty$, $y'_{2j}(\tau_j^{\infty})=-\infty$, and these solutions cease to exist after $\tau_j^{\infty}$.
\item Solutions do not intersect unless they collide.
\item Solutions do not cease to exist unless they collide.
\end{enumerate}
\end{prop}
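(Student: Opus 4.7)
The core idea is to reduce the multi-branch structure of $F(\tau,y)=0$ to a one-variable problem via the parametrization of \Cref{thm:curve_tau_xi}. Writing $\xi = 2\tau y^2$, the solution curve is globally parametrized by $\xi \mapsto \tau(\xi) = \xi(1+|\sin(2\xi)|)/(2\lambda^2)$, and along this curve one has the useful identity $y = \lambda/h(\xi)$, obtained by combining $y^2=\xi/(2\tau)$ with $\tau=\xi h(\xi)^2/(2\lambda^2)$. The branches $y_{2j-1}$ and $y_{2j}$ will then be nothing but the inverses of $\tau$ on its maximal monotonicity intervals, and $y$ is essentially a rescaled reading of $h$.

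First I would study the critical points of $\tau(\xi)$. The only non-smooth points are $\xi = k\pi/2$, and a direct computation of the one-sided derivatives gives $\tau'(k\pi/2^\pm) = (1 \pm k\pi)/(2\lambda^2)$, so each such point is a corner that is a strict local minimum with value $\widetilde{\tau}_k$. In each smooth interval $((k-1)\pi/2,k\pi/2)$, the formula $\tau'(\xi)=(1+|\sin(2\xi)|+2\xi\,\mathrm{sgn}(\sin(2\xi))\cos(2\xi))/(2\lambda^2)$ is strictly positive on $((k-1)\pi/2,(2k-1)\pi/4]$ (both summands nonnegative) and changes sign before reaching $k\pi/2$, yielding a unique interior local maximum $\xi_{k-1}^\ast\in((2k-1)\pi/4, k\pi/2)$ with value $\mu_{k-1}:=\tau(\xi_{k-1}^\ast)$. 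Plugging the bounds on $\xi_{k-1}^\ast$ into the formula for $\tau$ immediately gives $\mu_{k-1}\in(\widetilde{\tau}_{2k-1},\widetilde{\tau}_{2k})\subset(\tau_k,\tau_{k+1})$, which is precisely (vi) under the identification $\tau_j^\infty := \mu_{j-1}$.

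With the shape of $\tau(\xi)$ fixed, each branch $y_{2j-1}$ (resp.\ $y_{2j}$) is obtained by inverting $\tau$ on its ascending branch $((j-1)\pi/2,\xi_{j-1}^\ast)$ (resp.\ descending branch $(\xi_{j-1}^\ast, j\pi/2)$). Claim (i) is then the statement that only the first ascending branch contributes preimages below $\widetilde{\tau}_1$. Claims (ii) and (iii) follow from the fact that the only corner times are the $\widetilde{\tau}_k$: at $\widetilde{\tau}_j$ two new branches split off from $\xi=j\pi/2$, and differentiating $y^2=\xi/(2\tau)$ with $d\xi/d\tau = 1/\tau'(\xi)$ using the one-sided values of $\tau'$ computed above yields exactly the derivative formulas \eqref{eq:birth_derivative}. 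For (v), observe that on the branch $y_{2j-1}$ the identity $y=\lambda/h(\xi)$ reduces monotonicity of $y$ to monotonicity of $h$; since $h$ is strictly increasing on $((j-1)\pi/2,(2j-1)\pi/4)$ and strictly decreasing after, and since $\tau((2j-1)\pi/4)=\tau_j$ while $h((2j-1)\pi/4)=\sqrt{2}$, one obtains the asserted decreasing/increasing behavior and the minimum value $\lambda/\sqrt{2}$ at $\tau_j$. The same identity gives (iv): on $(\xi_{j-1}^\ast, j\pi/2)$ we are past the maximum of $h$, so $h$ is decreasing in $\xi$ and hence increasing in $\tau$, making $y_{2j} = \lambda/h$ decreasing; the ordering $y_{2j}<y_{2j+1}$ comes from the derivative comparison at birth in (ii) combined with (vii).

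For the remaining three items the arguments are short. At $\tau_j^\infty=\mu_{j-1}$, both $y_{2j-1}$ and $y_{2j}$ are evaluated at $\xi=\xi_{j-1}^\ast$, where $\tau'=0$, so $d\xi/d\tau\to\pm\infty$ from the two sides and the chain rule gives the $\pm\infty$ derivatives of (vi); since $\tau(\xi)$ takes no value above $\mu_{j-1}$ in a neighborhood of $\xi_{j-1}^\ast$, both branches terminate there. For (vii), if $y_a(\tau)=y_b(\tau)$ then $h(\xi_a)=h(\xi_b)=\lambda/y$ and $\xi_a h(\xi_a)^2 = 2\lambda^2\tau = \xi_b h(\xi_b)^2$, forcing $\xi_a=\xi_b$, i.e.\ a collision. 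For (viii), each $\xi$-interval defining a branch has endpoints that are either corners of $\tau$ (birth points, not cessations) or interior maxima of $\tau$ (collisions), exhausting the ways a branch can terminate. The main obstacle throughout is the careful sign/monotonicity bookkeeping in Step 1—once $\tau(\xi)$ is pinned down to alternate between strict-min corners at $\xi=k\pi/2$ and a unique strict-max $\xi_{k-1}^\ast$ in each smooth window, everything else is a translation of the $(\tau,\xi)$-picture to the $(\tau,y)$-plane via $y=\lambda/h(\xi)$.
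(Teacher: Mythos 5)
Your proposal is correct in substance but takes a genuinely different route from the paper's. You make the scalar function $\tau(\xi)=\xi\,h(\xi)^2/(2\lambda^2)=\xi(1+|\sin 2\xi|)/(2\lambda^2)$ the sole object of study: corners at $\xi=k\pi/2$ with one-sided slopes $(1\pm k\pi)/(2\lambda^2)$ (strict local minima with value $\widetilde{\tau}_k$), a unique interior maximum $\xi^\ast_{k-1}$ in each smooth window, and then every item of the proposition is read off by inverting $\tau$ on its maximal monotonicity intervals and using the identity $y=\lambda/h(\xi)$ along the curve. The paper, by contrast, works mostly in the $(\tau,y)$-plane, using the $(\tau,\xi)$ parametrization of \Cref{thm:curve_tau_xi} only as an auxiliary device, and proves (iv)--(viii) via one-sided limits of the implicit derivative, several contradiction arguments, and an induction on $j$ for (v)--(vi). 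Your route buys a much more transparent global picture -- (vi), (vii) and (viii) become near-immediate, the birth derivatives \eqref{eq:birth_derivative} follow from one chain-rule computation, and you even get the sharper localization $\tau_j^{\infty}\in(\tau_j,\widetilde{\tau}_{2j})$ -- at the cost of having to say explicitly that the monotone branches of $\tau(\xi)$ are \emph{exactly} the maximal solution branches of \eqref{eq:def_F} in the $(\tau,y)$-plane (via the homeomorphism in \Cref{thm:curve_tau_xi}); this identification is what makes your proofs of (iii) and (viii) exhaustive, and it deserves a sentence rather than being left implicit.

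Two points in your sketch need to be shored up, both easily. First, the uniqueness of the interior critical point of $\tau$ in each window is asserted but not proved, and it is load-bearing: without it there could be interior local minima of $\tau(\xi)$, i.e.\ births not at the $\widetilde{\tau}_j$, and (iii) plus your whole branch bookkeeping would collapse. It follows in one line: on $((2k-1)\pi/4,k\pi/2)$ one has $2\lambda^2\tau'(\xi)=1+|\sin 2\xi|-2\xi|\cos 2\xi|$ and $\frac{d}{d\xi}\bigl(1+|\sin 2\xi|-2\xi|\cos 2\xi|\bigr)=-4|\cos 2\xi|-4\xi|\sin 2\xi|<0$, so $\tau'$ is strictly decreasing there and vanishes exactly once. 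Second, your justification of $y_{2j}<y_{2j+1}$ by ``derivative comparison at birth plus (vii)'' is incomplete as stated, since (vii) alone does not exclude these two branches colliding (and hence being equal) at some later time; in your framework the clean argument is that their $\xi$-ranges lie on opposite sides of $j\pi/2$, so at any common time $\tau>\widetilde{\tau}_j$ the relation $y=\sqrt{\xi/(2\tau)}$ gives $y_{2j}(\tau)<\sqrt{j\pi/(4\tau)}<y_{2j+1}(\tau)$, which also shows they can never collide. With these two additions your plan yields a complete and arguably shorter proof of all eight claims.
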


\begin{center}
\begin{figure}[htp]
\begin{tikzpicture}
%draw axes with names
\draw[thick,->] (0,0) -- (12.5,0) node[anchor=north west] {$\tau$};
\draw[thick,->] (0,0) -- (0,6.5) node[anchor=south east] {$y(\tau)$};
%tags on x axis
%\foreach \x in {1,2,3,4,5}
%   \draw  (\x cm,1pt) -- (\x cm,-1pt) node[anchor=north] {$\tau_{\x}$};
\draw (2 cm,1pt) -- (2 cm,-1pt) node[anchor=north] {$\substack{\widetilde{\tau}_1\\ \tau_1}$};
\draw (4 cm,1pt) -- (4 cm,-1pt) node[anchor=north] {$\substack{\widetilde{\tau}_2\\ \ }$};
\draw (6 cm,1pt) -- (6 cm,-1pt) node[anchor=north] {$\substack{\widetilde{\tau}_3\\ \tau_2}$};
\draw (8 cm,1pt) -- (8 cm,-1pt) node[anchor=north] {$\substack{\widetilde{\tau}_4\\ \ }$};
\draw (10 cm,1pt) -- (10 cm,-1pt) node[anchor=north] {$\substack{\widetilde{\tau}_5\\ \tau_3}$};
%tags on y axis
\draw (1pt,2.5 cm) -- (-1pt,2.5 cm) node[anchor=east] {$\frac{\lambda}{\sqrt{2}}$};
\draw (1pt,5 cm) -- (-1pt,5 cm) node[anchor=east] {$\lambda$};

%two horizontal dashed lines to enclose our curve
\draw[gray,thick,dashed] (0,2.5) -- (12,2.5);
\draw[gray,thick,dashed] (0,5) -- (12,5);

%first solution
\draw[red,thick] (0,5) .. controls (1,4.5) and (1,2.5) .. (2,2.5) ;
\draw [red,thick] plot [smooth, tension=1.3] coordinates { (2,2.5) (3,2.8) (3.5,3.5)};
%\node at (0,5) [circle,fill=red,inner sep=1pt]{};
%\draw[red,thick] (2,2.5) .. controls (2,2.5) and (3.5,2.5) .. (3.5,3) ;
%\draw[red,thick] (2,2.5) arc (270:360:1cm);
%second solution
\draw [blue,thick] plot [smooth, tension=1.3] coordinates { (2,5) (3,4.5) (3.5,3.5)};
%\draw [blue,thick,densely dashdotted] plot [smooth, tension=1.3] coordinates { (2,5) (3,4.5) (3.5,3.5)};
%\draw[blue,thick] (2,5) arc (90:0:1.5cm);

%third solution
\draw[red,thick] (2,5) .. controls (4,5) and (4,2.5) .. (6,2.5) ;
%\draw [red,thick] plot [smooth, tension=0.5] coordinates { (6,2.5) (6.5,2.6) (6.6,2.64)};
%\draw[red!80!blue!80,thick] (2,5) .. controls (4,5) and (4,2.5) .. (6,2.5) ;
%\draw[red,thick] (2,5) .. controls (4,5) and (4,2.5) .. (6,2.5) ;
\draw[red,thick] (6,2.5) arc (270:350:0.82cm);

%fourth solution 
\draw [blue,thick] plot [smooth, tension=1.3] coordinates { (4,5) (6,4.1) (6.78,3.15)};

%fifth solution
\draw[red,thick] (4,5) .. controls (7,5) and (7,2.5) .. (10,2.5) ;
\draw[red,thick] (10,2.5) arc (270:350:0.3 cm);

%sixth solution
\draw [blue,thick] plot [smooth, tension=1.1] coordinates { (6,5) (9.1,4.3) (10.3,2.74)};

%seventh, ninth and eleventh solutions
\draw[red,thick] (6,5) .. controls (10,5) and (10,3.5) .. (12,2.9) ;
\draw [red,thick] plot [smooth, tension=1.1] coordinates { (8,5) (10,4.92) (12,4.6)};
\draw [red,thick] plot [smooth, tension=1.1] coordinates { (10,5) (11,4.97) (12,4.9)};

%eigth and tenth solutions
\draw [blue,thick] plot [smooth, tension=1.1] coordinates { (8,5) (10,4.8) (12,4.2)};
\draw [blue,thick] plot [smooth, tension=1.1] coordinates { (10,5) (11,4.96) (12,4.85)};

%collision of solutions
\node at (3.5,3.5) [circle,fill=black,inner sep=1pt]{};
\node at (6.81,3.17) [circle,fill=black,inner sep=1pt]{};
\node at (10.3,2.74) [circle,fill=black,inner sep=1pt]{};

%solutions touch bottom line
\node at (2,2.5) [circle,fill=red,inner sep=1pt]{};
\node at (6,2.5) [circle,fill=red,inner sep=1pt]{};
\node at (10,2.5) [circle,fill=red,inner sep=1pt]{};

%solutions are born
\node at (2,5) [circle,fill=red!50!blue!90,inner sep=1pt]{};
\node at (4,5) [circle,fill=red!50!blue!90,inner sep=1pt]{};
\node at (6,5) [circle,fill=red!50!blue!90,inner sep=1pt]{};
\node at (8,5) [circle,fill=red!50!blue!90,inner sep=1pt]{};
\node at (10,5) [circle,fill=red!50!blue!90,inner sep=1pt]{};

%labeling solutions
\node at (1.5,3.5) [red] {$y_1$};
\node at (3,4) [blue] {$y_2$};
\node at (4.7,3.5) [red] {$y_3$};
\node at (5.5,4) [blue] {$y_4$};
\node at (8,3.5) [red] {$y_5$};
\node at (9.1,4) [blue] {$y_6$};
\node at (11.3,3.5) [red] {$y_7$};
\node at (11.9,4) [blue] {$y_8$};
\end{tikzpicture}
\caption{{\small Schematic drawing of solutions $(\tau,y(\tau))$ to $F(\tau,y)=0$.}}
\label{fig:solution_y}
\end{figure}
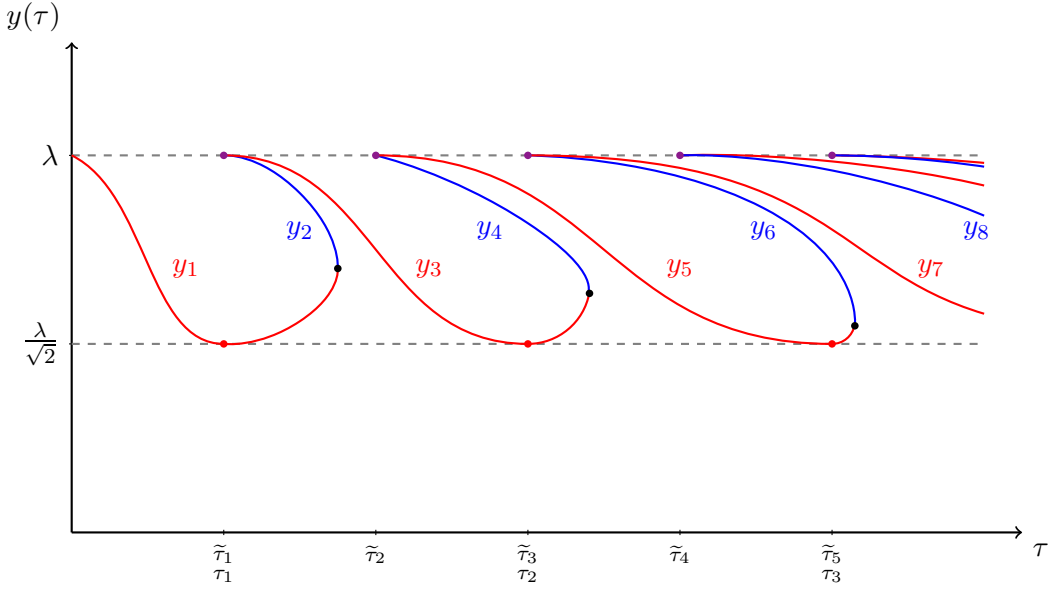
\end{center}

\begin{proof}
{\bf \emph{(ii)}} Let $j\in\N$. The point $(\widetilde{\tau}_j, \lambda)$ belongs to $F(\tau,y)=0$ by direct inspection. At this point, $h$ is not differentiable and therefore neither is $F$. However, \Cref{thm:curve_tau_xi} implies that a Lipschitz solution curve exists traversing this point. Consider a sufficiently small neighborhood of $(\widetilde{\tau}_j, \lambda)$ in our curve. The function $F$ is differentiable at any point of this neighborhood except for $(\widetilde{\tau}_j, \lambda)$. 

In the $(\tau,\xi)$ coordinates introduced in \Cref{thm:curve_tau_xi}, this point can be written as $(\widetilde{\tau}_j, \frac{\pi\, j}{2})$. As shown in \Cref{thm:curve_tau_xi}, a global parametrization of our curve in these new coordinates is
 $\tau (\xi)= \xi \, h(\xi)^2/(2\lambda^2)$. A neighborhood of $(\widetilde{\tau}_j, \frac{\pi\, j}{2})$ in our curve is given by 
 \[
 \{ (\tau (\xi), \xi) \mid \xi \in (\frac{\pi\, j}{2}-\kappa, \frac{\pi\, j}{2}+\kappa)\}
 \]
 for some $\kappa>0$ sufficiently small, which we split into two parts:
  \[
\mathcal{C}_{-} = \{ (\tau (\xi), \xi) \mid \xi \in (\frac{\pi\, j}{2}-\kappa, \frac{\pi\, j}{2})\}, \qquad \mathcal{C}_{+} = \{ (\tau (\xi), \xi) \mid \xi \in (\frac{\pi\, j}{2}, \frac{\pi\, j}{2}+\kappa)\}.
 \]
Let $\widetilde{\Cc}_{\pm}$ be the image of these neighborhoods in $(\tau,y)$ coordinates. Along these curves, we may take the one-sided limits  $\xi\rightarrow \pi j/2$ of the function $h(\xi) + 2\xi h'(\xi)$, which are $1 \pm \pi j$. 

Since the limits are nonzero,  by restricting $\kappa$, we may show that $\pa F/\pa y$ is nonzero on each $\widetilde{\Cc}_{\pm}$. In particular, at every point in $\widetilde{\Cc}_{-}$ (resp. $\widetilde{\Cc}_{+}$), our curve is differentiable and it may be parametrized in terms of $(\tau,y(\tau))$ coordinates (resp. $(\tau,\tilde{y}(\tau))$. Moreover, by further restricting $\kappa$, we may show that $\tau(\xi)$ is decreasing for $\xi$ in $\Pi_2 \mathcal{C}_{-}$ and increasing for $\xi$ in $\Pi_2 \mathcal{C}_{+}$, i.e. $\pi j/2$ is a local minimum of the curve $\tau (\xi)$. As a result, $\Pi_1 (\widetilde{\Cc}_{+}\cup \widetilde{\Cc}_{-})\subset (\widetilde{\tau}_j,\infty)$, where we denote by $\Pi_j$ the projection onto the $j$-th coordinate.
 
We may compute the one-sided limits
 \begin{equation}\label{eq:birth_sol_der}
 \begin{split}
 y'(\widetilde{\tau}_j) & = \lim_{\tau\rightarrow \widetilde{\tau}_j^{+}} \frac{-2\, y(\tau)^3\, h'(2\tau y(\tau)^2)}{h(2\tau y(\tau)^2) + 4\tau y(\tau)^2\, h'(2\tau y(\tau)^2)} = \frac{2\, \lambda^3}{1 - \pi\, j}<0\\
  \tilde{y}'(\widetilde{\tau}_j) & = \lim_{\tau\rightarrow \widetilde{\tau}_j^{+}} \frac{-2\, \tilde{y}(\tau)^3\, h'(2\tau \tilde{y}(\tau)^2)}{h(2\tau \tilde{y}(\tau)^2) + 4\tau \tilde{y}(\tau)^2\, h'(2\tau \tilde{y}(\tau)^2)} = \frac{-2\, \lambda^3}{1 + \pi\, j}<0.
 \end{split}
 \end{equation}
 
As a result, $y$ and $\tilde{y}$ must be two different solutions (since their one-sided derivatives at those points are different) and they both exist for times $\tau>\widetilde{\tau}_j$. Moreover, neither solution could exist before $\widetilde{\tau}_j$, as this would imply that the curve $F=0$ intersects itself at the point $(\widetilde{\tau}_j,\lambda)$. But this would imply that $\widetilde{F}=0$ passes through $(\widetilde{\tau}_j,\pi j/2)$ twice, which is false.

From now on, we let $y_{2j}$ and $y_{2j+1}$ be the two solutions born at $\widetilde{\tau}_j$.

{\bf \emph{(iii)}} Suppose a solution $y$ is born at a certain time $0<\tau_y\neq \widetilde{\tau}_j$ for all $j\in\N$. The point $(\tau_y, y(\tau_y))=(\tau_y, y_0)$ is in the Lipschitz-continuous curve $F(\tau,y)=0$. Since $\tau_y\neq \widetilde{\tau}_j$, our curve must be differentiable at this point. There are two options: either $(\pa_y F)(\tau_y,y_0)=0$ or it is nonzero. In the latter case, the Implicit  Function theorem implies that $y$ can be solved as a function of $\tau$ in a neighborhood of $\tau_y$, which effectively extends the solution for times $\tau<\tau_y$, a contradiction. 

Suppose that 
\begin{equation}\label{eq:yderiv_F}
(\pa_y F)(\tau_y,y_0)= h(2\tau_y y_0^2) + 4\tau_y y_0^2\, h'(2\tau_y y_0^2)= 0
\end{equation}
Then one may easily check that $(\pa_{\tau} F)(\tau_y,y_0)\neq 0$, and the curve $F(\tau, y)=0$ can be extended around the point $(\tau_y,y_0)$ for $\tau$ as a function in $y$.

In order to study this function, let us parametrize the curve in $(\tau(\xi),\xi)$ coordinates as in \Cref{thm:curve_tau_xi}. For $\xi_0 := 2 \tau_y  y_0^2$, we have that $\tau'(\xi_0)=0$. The fact that $h''(\xi)=-h(\xi)$ in all points of differentiability, together with \eqref{eq:yderiv_F}, yields
\[
\begin{split}
\tau''(\xi) \Big |_{\xi=\xi_0} & =   \frac{4h(\xi) h'(\xi)+2\xi h'(\xi)^2+2 \xi h(\xi)\, h''(\xi) }{2\lambda^2}\Big |_{\xi=\xi_0} =
\frac{- \frac{2h(\xi_0)^2}{\xi_0}+\frac{h(\xi_0)^2}{2\xi_0}- 2 \xi_0 h(\xi_0)^2}{2\lambda^2}\\
&= \frac{h(\xi_0)^2}{4\lambda^2\,\xi_0} \, ( -3-4\xi_0^2) <0 .
\end{split}
\]
Back to the parametrization $(\tau (y), y)$ near $(\tau_y, y_0)$, the chain rule yields
\[
\tau''(y) \Big |_{y=y_0} = \frac{d^2\tau}{d\xi^2}\Big |_{\xi_0}\, \left( \frac{d\xi}{dy}\Big |_{y_0}\right)^2<0.
\]
As a result, we have that the curve $F(\tau,y)=0$ is left of the vertical asymptote at $(\tau_y,y_0)$, and thus the solution $y$ born at $\tau_y$ cannot be continued for $\tau>\tau_y$. This is contradiction, which proves that no solutions can be born at $\tau_y$.

{\bf \emph{(i)}} The existence of at least one solution curve starting at $(\tau,y)=(0,\widetilde{z}_0)$ was proved in \Cref{thm:existence}; we call this solution $y_1$. In order to show that $y_1$ is the only solution born at $\tau=0$, let us consider the curve $F(\tau,y)=0$ for $\tau<0$. By definition of $F$, the curve must be even with respect to $\tau$. As a result, one may extend $y_1$ to negative values of $\tau$ by reflection (note that $\lim_{\tau\rightarrow 0^{+}} y_1'(\tau)<0$).
By the local uniqueness of solutions (which follows from Implicit Function theorem for Lipschitz functions \cite{Clarke1,Clarke2}), $y_1$ must be the only solution that traverses $\tau=0$.

Finally, \emph{(iii)} implies that no other solution is born in the time interval $(0,\widetilde{\tau}_1)$. Since at least one solution must exist (by \Cref{thm:existence}), $y_1$ must exist for all $\tau \in [0,\widetilde{\tau}_1)$.

{\bf \emph{(viii)}} Suppose that a solution $y(\tau)$ exists for $\tau \in [a,b)$, $0<b<\infty$, its maximal interval of existence.
We want to prove that $\lim_{\tau\rightarrow b^{-}} y'(\tau)$ is $\pm \infty$.

By contradiction, suppose that is not the case (thus this limit either does not exist or it is finite). For each $\tau\in (a,b)$, $\tau\neq \widetilde{\tau}_j$, the function $h(2\tau y(\tau)^2)$ is differentiable. Consider $\kappa>0$ sufficiently small so that $(b-\kappa,b)\cap \{\widetilde{\tau}_j\}_{j\in\N}=\emptyset$. By the Implicit Function theorem, $y(\tau)$ is differentiable and
\begin{equation}\label{eq:y_prime}
y'(\tau)= \frac{-2\, y(\tau)^3\, h'(2\tau y(\tau)^2)}{h(2\tau y(\tau)^2) + 4\tau y(\tau)^2\, h'(2\tau y(\tau)^2)}
\end{equation}
provided the denominator does not vanish. In particular, note that the numerator and denominator cannot simultaneously vanish. Since $y'(b)\neq \pm\infty$, the denominator cannot possibly vanish at $b$, and therefore $y(\tau)$ must be differentiable in a small neighborhood $(b-\kappa,b)$, by making $\kappa>0$ sufficiently small.

In this neighborhood,
\begin{equation}\label{eq:denom}
h(2\tau y(\tau)^2) + 4\tau y(\tau)^2\, h'(2\tau y(\tau)^2)\neq 0
\end{equation}
and therefore the denominator must have a constant sign for all $\tau \in (b-\kappa,b)$.

A simple computation then shows that for all $\tau \in (b-\kappa,b)$, 
\begin{equation}\label{eq:xi_prime}
[2\tau \, y(\tau)^2]' = \frac{2\lambda\, y(\tau)}{h(2\tau y(\tau)^2) + 4\tau y(\tau)^2\, h'(2\tau y(\tau)^2)}.
\end{equation}
The fact that the denominator has a sign implies that $2\tau \, y(\tau)^2$ is either increasing or decreasing in the interval $(b-\kappa,b)$. In particular, the limit $\lim_{\tau\rightarrow b^{-}} 2\tau y(\tau)^2=:\xi_b$ must exist since the denominator does not blow up. Moreover $\xi_b\neq 0,\infty$ since $b\neq 0,\infty$ and $y(\tau)\in [\lambda/\sqrt{2}, \lambda]$. As a result, $y(\tau)$ converges to $\sqrt{\xi_b/(2b)}$ as $\tau\rightarrow b^{-}$.

Now there are two possibilities: either $h$ is differentiable at $\xi_b$ or it is not. If it isn't, we have that $\xi_b$ must be of the form $\pi j/2$ and $b=\widetilde{\tau}_j$ for some $j\in\N$. By \emph{(ii)}, $y$ must be one of the solutions that are born at that point, but this contradicts the fact that $y$ existed for times $\tau<b$.
If $h$ is differentiable at $\xi_b$, then we may take the limit $\tau\rightarrow b^{-}$ at \eqref{eq:denom}, which implies that the denominator does not vanish at $\tau=b$ (by hypothesis). By the Implicit Function theorem, the curve $F(\tau,y)=0$ can be parametrized as $(\tau,y(\tau))$ on a neighborhood of the point $(b, \sqrt{\xi_b/(2b)})$, where $y(\tau)$ is locally unique and Lipschitz. This effectively extends the solution past $\tau=b$ which is a contradiction.

We have therefore proved that when a solution ceases to exists, we must have $\lim_{\tau\rightarrow b^{-}} y'(\tau)\in \{ \pm \infty \}$. Suppose that this limit is $+\infty$ (the opposite case is analogous). We must show that there exists another solution $\tilde{y}$ that collides with $y$, i.e. 
\begin{equation}
\lim_{\tau\rightarrow b^{-}} \tilde{y}(\tau)=y(b) ,\qquad \mbox{and} \qquad \lim_{\tau\rightarrow b^{-}} \tilde{y}'(\tau)=-\infty.
\end{equation}

Note that $y'(\tau)$ does not blow up at $\widetilde{\tau}_j$, and thus we can find $\kappa>0$ small enough so that $(b-\kappa,b)$ does not intersect either point (thus $h$ is differentiable in this interval). Since $y'(b)=+\infty$, we have that the denominator \eqref{eq:denom} vanishes at $b$.
 
In view of \eqref{eq:denom},  this implies that $h'(2b y(b)^2)<0$. By continuity, and by making $\kappa$ smaller if necessary, we may assume that $h'(2\tau y(\tau)^2)<0$ for all $\tau \in (b-\kappa,b)$. In view of \eqref{eq:y_prime}, $y'(\tau)$ has a sign in $(b-\kappa,b)$, and since $y'(b)=+\infty$, it must be positive. Thus $y(\tau)$ is increasing for $\tau \in (b-\kappa,b)$.

Using the fact that \eqref{eq:denom} vanishes at $b$, it follows that 
\[
\frac{\pa F}{\pa \tau} (b,y(b)) = 2\, y(\tau)^3 \, h'(2\tau y(\tau)^2) \Big |_{\tau=b} = -\frac{2\,y(b)^3\, h(2b y(b)^2)}{4b y(b)^2} =-\frac{\lambda}{2b}  \neq 0.
\]
By the Implicit Function theorem, the curve $F(\tau,y)=0$ admits a parametrization of the form $(\tau(y),y)$ around the point $(b,y(b))$. By a similar argument to \emph{(iii)}, one easily sees that this curve can be extended beyond this point for values $y>y(b)$. This necessarily corresponds to a different solution $\tilde{y}(\tau)$ (since our solution $y(\tau)$ was strictly smaller than $y(b)$ for all $\tau\in (b-\kappa,b)$). 

This proves that $\tilde{y}(b)=y(b)$. Solution $\tilde{y}(\tau)$ cannot exist beyond $\tau=b$, and thus we have proved that $\tilde{y}'(b)\in \{\pm \infty\}$. The fact that the sign is opposite to that of $y'(b)$ follows from the next point.

{\bf \emph{(vii)}}  Suppose that two solutions $y(\tau)$ and $\tilde{y}(\tau)$, which exist in a common time interval $\tau\in(b-\kappa,b)$ intersect at $\tau=b$ ($0<\kappa<b<\infty$) for the first time. 
We may assume that $h'$ (and thus $F$) is differentiable in this interval by reducing $\kappa$ if necessary. By \emph{(viii)}, $y'(\tau)$ and $\tilde{y}'(\tau)$ must exist in this interval, otherwise the solution would cease to exist.
By continuity, we may assume that the denominator in $y'$ and $\tilde{y}'$, \eqref{eq:denom}, has a sign for all $\tau\in (b-\kappa,b)$. By \eqref{eq:xi_prime}, $[2\tau y(\tau)^2]'$ (resp.\ $[2\tau \tilde{y}(\tau)^2]'$) also has a sign. This implies that $2\tau y(\tau)^2$ (resp. $2\tau \tilde{y}(\tau)^2$) is monotone and we may therefore assume $h'(2\tau y(\tau)^2)$ has a constant sign in $(b-\kappa,b)$ by further reducing $\kappa$. The latter, together with the sign of the denominator, implies that $y'(\tau)$ and $\tilde{y}'(\tau)$ have a sign in $(b-\kappa,b)$. Thus $y$ and $\tilde{y}$ are monotone in this interval.

By implicit differentiation, 
\begin{equation}\label{eq:intersection}
\begin{split}
\frac{\pa F}{\pa \tau} (\tau, y(\tau)) + \frac{\pa F}{\pa y} (\tau, y(\tau)) \cdot y'(\tau) & = 0\\
\frac{\pa F}{\pa \tau} (\tau, \tilde{y}(\tau)) + \frac{\pa F}{\pa y} (\tau, \tilde{y}(\tau)) \cdot \tilde{y}'(\tau) & = 0.
\end{split}
\end{equation}
Subtracting these equations, and taking the limit as $\tau\rightarrow b^{-}$ we find that 
\[
\frac{\pa F}{\pa y} (b, y(b)) \cdot \left[ y'(b) - \tilde{y}'(b)\right] = 0.
\]
Thus either $\lim_{\tau\rightarrow b^{-}} [y'(\tau) - \tilde{y}'(\tau)]=0$, or $\frac{\pa F}{\pa y} (b, y(b))=0$. 

\indent \emph{Case 1)} First suppose that $\frac{\pa F}{\pa y} (b, y(b))=0$. If $y'(b)$ is finite, then it quickly follows from either equation in \eqref{eq:intersection} that $\frac{\pa F}{\pa \tau} (b, y(b))=0$. However, one may easily check that along our curve $F=0$ we have
\begin{equation}\label{eq:Fdery_dertau}
\frac{\pa F}{\pa y} (\tau,y) = \frac{2\tau}{y}\, \frac{\pa F}{\pa \tau} (\tau,y) + \frac{\lambda}{y},
\end{equation}
and therefore both partial derivatives can't simultaneously vanish at $(b,y(b))$. It follows that both $y'(b)$ and $\tilde{y}'(b)$ must blow up. Moreover, the curve $F(\tau,y)=0$ is smooth near $(b,y(b))$ (since $y'$ does not blow up at $\tau=\widetilde{\tau}_j$ and thus $b\neq \widetilde{\tau}_j$). Near this point, $\tau$ can be written as a $C^1$ function of $y$ and by implicit differentiation
\[
\frac{\pa F}{\pa \tau} (b,y(b)) \, \tau'(y(b)) =0.
\]
By \eqref{eq:Fdery_dertau}, we must have that $\tau'(y(b))=0$ and thus $F(\tau,y)=0$ has a vertical tangent at $(b,y(b))$ (see also the argument in \emph{(iii)}). As a result $\tilde{y}'$ and $\tilde{y}$ must have opposite signs, and without loss of generality $y'(b)=+\infty$ while $\tilde{y}'(b)=-\infty$.

\indent \emph{Case 2)} Next suppose that $\lim_{\tau\rightarrow b^{-}} [y'(\tau) - \tilde{y}'(\tau)]=0$. This, together with the fact that $y$ and $\tilde{y}$ are monotone in $(b-\kappa,b)$, implies that $y'(\tau)$ and $\tilde{y}'(\tau)$ must have the same sign and that 
\[
\lim_{\tau\rightarrow b^{-}} y'(\tau)
\]
either is finite or it is $\pm \infty$. We rule out the case $\pm\infty$ since that would imply that the denominator in \eqref{eq:y_prime} must vanish, and thus $\frac{\pa F}{\pa y} (b, y(b))=0$ which is Case 1. Let us therefore assume that
\[
\lim_{\tau\rightarrow b^{-}} y'(\tau) = m_b = \lim_{\tau\rightarrow b^{-}} \tilde{y}'(\tau) ,
\]
for some $m_b\in\R$. Then either $b=\widetilde{\tau}_j$ for some $j\in\N$, or the curve $F(\tau,y)=0$ must be $C^{1}$ at $(b,y(b))$.

\begin{itemize}
\item  If $b=\widetilde{\tau}_j$, then $(b,y(b))=(\widetilde{\tau}_j, \widetilde{z}_0)$. By \emph{(ii)}, $y$ and $\tilde{y}$ must be $y_{2j}$ and $y_{2j+1}$. However, neither of these solutions could have existed for $\tau< \widetilde{\tau}_j$, a contradiction.
\item Suppose that the curve $F(\tau,y)=0$ is $C^1$ at $(b,y(b))$. Since $\frac{\pa F}{\pa y}(b,y(b))\neq 0$, the Implicit Function theorem allows us to extend $y$ and $\tilde{y}$ past the point $\tau=b$. This contradicts the local uniqueness of the curve $F(\tau,y)=0$ at $(b,y(b))$.
\end{itemize}

We have therefore proved that Case 1 is the only option when two solutions $y$ and $\tilde{y}$ intersect, namely they must collide, $y'(b)=-\tilde{y}'(b)=+\infty$ and both solutions cease to exist beyond $\tau=b$.

{\bf \emph{(iv)}} By point \emph{(vii)}, the two solutions $y_{2j}(\tau)$ and $y_{2j+1}(\tau)$ may not intersect unless they collide. For $\tau>\widetilde{\tau}_j$ sufficiently small, $y_{2j}(\tau)<y_{2j+1}(\tau)$ in view of their derivatives at $\widetilde{\tau}_j$, cf. \eqref{eq:birth_sol_der}. For these solutions to collide $y_{2j}'(\tau)$ would have to change sign. Note that  
\begin{equation}\label{eq:xi_der_2j}
(2\tau y_{2j}(\tau)^2)' = \frac{2\lambda\, y_{2j}(\tau)}{h(2\tau y_{2j}(\tau)^2) + 4\tau y_{2j}(\tau)^2\, h'(2\tau y_{2j}(\tau)^2)}.
\end{equation}
The denominator is negative at $\tau=\widetilde{\tau}_j$ and may not change unless $y_{2j}'(\tau)$ blows up. As a result, $2\tau y_{2j}(\tau)^2$ is decreasing for all times of existence of $y_{2j}$. In particular, $h'(2\tau y_{2j}(\tau))<0$ until $\tau=\tau_k>\widetilde{\tau}_j$ for some $k\in\N$.

Consequently, the derivative
\[
y_{2j}'(\tau)= \frac{-2y_{2j}(\tau)^3 h'(2\tau y_{2j}(\tau)^2)}{h(2\tau y_{2j}(\tau)^2) + 4\tau y_{2j}(\tau)^2\, h'(2\tau y_{2j}(\tau)^2)}.
\]
is negative until $2\tau y_{2j}(\tau)^2=\frac{\pi}{2} (k-1/2)$ for some $k\in\N$, i.e. $\tau=\tau_k$ and $y(\tau)=\lambda/\sqrt{2}$. At $\tau=\tau_k$, the denominator becomes is strictly positive and equal to $\sqrt{2}$. By continuity, this implies that the denominator must vanish before $y_{2j}'(\tau)$ changes sign, and thus $y_{2j}(\tau)$ must cease to exist before it can collide with $y_{2j+1}(\tau)$.

Consequently, $y_{2j}$ must collide with some solution $y_{i}$ for $i<2j$, and therefore $y_{2j}(\tau)<y_{2j+1}(\tau)$ for all common times of existence $\tau$.

{\bf \emph{(v)} and \emph{(vi)}} We argue by induction. Consider the case $j=1$. The solution $y_1(\tau)$ is decreasing for $\tau<\tau_1=\widetilde{\tau}_1$ by direct inspection of the derivative \eqref{eq:y_prime} and \eqref{eq:xi_prime}. At $\tau=\tau_1$, $y_1(\tau_1)=\lambda/\sqrt{2}$. 

The denominator of the derivative \eqref{eq:denom} is strictly positive at $\tau_1$. However, $h'(2\tau\, y_1(\tau)^2)<0$ for all $\tau>\tau_1$. As a result, $y_1$ must cease to exist before it reaches $\lambda$. By \emph{(viii)}, $y_1$ must collide with another solution at a certain time $\tau_1^{\infty}$.  For times $\tau>\tau_1$, there exist at least two more solutions $y_2(\tau)<y_3 (\tau)$ by \emph{(ii)} and \emph{(iv)}. At time $\tau_2$ another solution must reach $\lambda/\sqrt{2}$. This solution cannot have collided with $y_1$ or it would have ceased to exist. As a result, $\tau_1^{\infty}<\tau_2=\widetilde{\tau}_3$. In particular, $y_1$ must collide with $y_2$, $y_3$, $y_4$ or $y_5$. However $y_2<y_3<y_4<y_5$ since they are born ordered and, by \emph{(vii)}, they can't intersect unless they collide (but they can't collide before they become increasing after hitting $\lambda/\sqrt{2}$, which is impossible before one collides with $y_1$). As a result, $y_1$ must collide with $y_2$. This also implies that $y_3$ must reach $\lambda$ at time $\tau_2$. 

We have proved \emph{(v)} and \emph{(vi)} in the case $j=1$. Next suppose that these claims are true for all integers smaller than $j\geq 2$, and let us prove it for $j$. Consider the solutions $y_{2j-1}$ and $y_{2j}$ born at times $\widetilde{\tau}_{j-1}$ and $\widetilde{\tau}_{j}$, respectively. By \emph{(ii)}, $y_{2j-1}$ has a negative derivative at birth. For $\tau>\widetilde{\tau}_{j-1}$, we have that 
\[
y'_{2j-1}(\tau) = \frac{-2\, y_{2j-1}(\tau)^3\, h'(2\tau y_{2j-1}(\tau)^2)}{h(2\tau y_{2j-1}(\tau)^2) + 4\tau y_{2j-1}(\tau)^2\, h'(2\tau y_{2j-1}(\tau)^2)} .
\]
At time $\widetilde{\tau}_{j-1}$ the (right-limit of the) denominator is positive. As a result,
\[
(2\tau y_{2j-1}(\tau)^2)' = \frac{2\lambda y_{2j-1}(\tau)}{h(2\tau y_{2j-1}(\tau)^2) + 4\tau y_{2j-1}(\tau)^2\, h'(2\tau y_{2j-1}(\tau)^2)}
\]
is positive at $\widetilde{\tau}_{j-1}$. By inspection of this denominator we notice that it stays positive as $2\tau y_{2j-1}(\tau)^2$ goes from $2\widetilde{\tau}_{j-1} \lambda^2= \frac{\pi}{2}\, (j-1)$ until $\frac{\pi}{2}\, (j-1) + \frac{\pi}{4}$ (and thus the solution cannot collide in this time). At the latter point, $\tau=\tau_j$ and $y_{2j-1}(\tau_j)=\lambda/\sqrt{2}$. This proves \emph{(v)} for $y_{2j-1}$. A similar argument shows that $y_{2j}$ is decreasing until it ceases to exist.

For $\tau>\tau_j$, $y_{2j-1}'$ becomes positive. By the induction hypothesis, there exists a time $\tau_{j-1}^{\infty} <\tau_j$ where $y_{2j-3}$ collided with $y_{2j-2}$. As a result, $y_{2j-1}$ is the smallest solution existing at time $\tau_j$. 
At $\tau=\tau_j$, the denominator of $y_{2j-1}'$ equals $1$. For times $\tau>\tau_j$ and until this denominator becomes zero, $(2\tau y_{2j-1}(\tau)^2)'>0$. If $2\tau y_{2j-1}(\tau)^2$ reached $\frac{\pi}{2} \, j$, then the denominator would become $1- \pi \, j<0$. This cannot happen without the denominator passing through zero, and thus $y_{2j-1}$ ceasing to exist. As a result, there exists some time $\tau_j^{\infty} >\tau_j$ such that $y_{2j-1}$ ceases to exist. By \emph{(viii)}, this solution must collide with another. Moreover $\tau_j^{\infty}<\tau_{j+1}$ since at $\tau_{j+1}$ some decreasing solution must reach $\lambda/\sqrt{2}$ which is impossible without intersecting (and thus colliding with) $y_{2j-1}$ first. As a result, $y_{2j-1}$ cannot exist at time $\tau_{j+1}$. Thus $\tau_j^{\infty}\in (\tau_j,\tau_{j+1})$.

Finally, for times $\tau\in (\tau_j,\tau_{j+1})$, only solutions $y_{2j},y_{2j+1}, \ldots, y_{4j+1}$ may exist. These solutions are ordered increasingly by \emph{(iv)} and the fact that they are all decreasing until they reach $\lambda/\sqrt{2}$ (which may not happen until $\tau_{j+1}$). As a result, $y_{2j-1}$ must collide with $y_{2j}$. This concludes the proof of claims \emph{(v)} and \emph{(vi)}.
\end{proof}

As a result of \Cref{thm:bifurcations}, there are as many $\tau_j^{\infty}$ as $\tau_j$, which live between $\tau_j$ and $\tau_{j+1}$. We will later show that $\tau_j^{\infty}$ approaches $\tau_j$ as $j\rightarrow\infty$. However, the fact that the $\tau_j$ are equally spaced allows us to compute the approximate rate between births and deaths of solutions.

At each birth time $\widetilde{\tau}_j$, two solutions are born. At each collision time $\tau_j^{\infty}$, two solutions die.
The  approximate rate between births and deaths is thus
\[
\lim_{j\rightarrow\infty} \frac{\tau_j}{\widetilde{\tau}_j} = 2
\]
Consequently, as time passes, more and more solutions exist simultaneously. Another consequence of \Cref{thm:bifurcations} is that simultaneous solutions are truly ordered according to their subindex.

\subsection{Minimal solution and asymptotic behavior}

Our goal is to identify and study the smallest solution to \eqref{eq:def_h}, which we call $Y(\tau)$. \Cref{thm:bifurcations} shows that 
\begin{equation}\label{eq:big_Y}
Y (\tau)= \begin{cases}
y_1(\tau) & \mbox{if}\ \tau \in (0,\tau_1^{\infty}],\\
y_{2j+1} (\tau) & \mbox{if}\ \tau \in (\tau_j^{\infty}, \tau_{j+1}^{\infty}], \quad j\in\N.
\end{cases}
\end{equation}
This function of $\tau$ is left-continuous and it has jumps at the points $\tau_j^{\infty}$.  Moreover, we define the function 
\begin{equation}\label{eq:def_J}
\Jc (\lambda,\tau)= \lim_{\delta\rightarrow 0^{+}} Y(\lambda, \tau +\delta) 
=\begin{cases}
y_1(\tau) & \mbox{if}\ \tau \in (0,\tau_1^{\infty}),\\
y_{2j+1} (\tau) & \mbox{if}\ \tau \in [\tau_j^{\infty}, \tau_{j+1}^{\infty}), \quad j\in\N,
\end{cases}
\end{equation}
which appears in the lower bound in \Cref{thm:resonantLDP_diff_variance}.
Before we continue our study of the function $Y$, we obtain a good approximation of the collision times $\tau_j^{\infty}$.

\begin{prop}\label{thm:jump_points} Fix $\lambda>0$ and let $\tau_j$, $j\in\N$, be as in \eqref{eq:def_tau}. As $j\rightarrow\infty$, we have the following asymptotic development for the collision times
\[
\tau_j^{\infty}= \tau_j + \O\left( \frac{1}{j\, \lambda^2}\right).
\]
\end{prop}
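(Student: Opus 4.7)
The plan is to work in the global $\xi$-parametrization of $\{F=0\}$ from \Cref{thm:curve_tau_xi}, where $\tau(\xi)=\xi\,h(\xi)^2/(2\lambda^2)$, and identify $\tau_j^{\infty}$ as the value of $\tau$ at an appropriate critical point of the map $\xi\mapsto\tau(\xi)$. By parts \emph{(v)}--\emph{(vi)} of \Cref{thm:bifurcations}, the branch carrying $y_{2j-1}$ from $\tau=\tau_j$ up to $\tau=\tau_j^{\infty}$ corresponds to $\xi$ increasing monotonically from $\pi(2j-1)/4$ up to the first zero $\xi_j^{\infty}$ of $\tau'(\xi)$ encountered, so that $\tau_j^{\infty}=\tau(\xi_j^{\infty})$. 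Everything thus reduces to locating $\xi_j^{\infty}$ asymptotically as $j\to\infty$.

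First I would use the identity $h(\xi)^2=1+|\sin(2\xi)|$ to compute $\tau'(\xi)$ explicitly on the interval $I_j:=(\pi(2j-1)/4,\pi j/2)$. A short case analysis in the parity of $j$ shows that, writing $\xi=\pi j/2-\epsilon$ with $\epsilon\in(0,\pi/4)$, one always has $|\sin(2\xi)|=\sin(2\epsilon)$ and $\operatorname{sign}(\sin 2\xi)\cos(2\xi)=-\cos(2\epsilon)$, so $\tau'(\xi)=0$ becomes the implicit equation
\[
1+\sin(2\epsilon) \;=\; (\pi j-2\epsilon)\,\cos(2\epsilon).
\]
Uniqueness of the root $\epsilon_j\in(0,\pi/4)$ in $I_j$ is guaranteed by \Cref{thm:bifurcations}\,\emph{(vi)}.

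The core of the argument is the asymptotic expansion of this equation. Since the left-hand side is bounded while $\pi j-2\epsilon\to\infty$, a root can exist only if $\cos(2\epsilon_j)=\O(1/j)$, which forces $\epsilon_j\to\pi/4^{-}$. I would therefore substitute $\epsilon_j=\pi/4-\eta_j$ and expand
\[
\cos(2\epsilon_j)=\sin(2\eta_j)=2\eta_j+\O(\eta_j^3),\qquad \sin(2\epsilon_j)=\cos(2\eta_j)=1-2\eta_j^2+\O(\eta_j^4),
\]
turning the implicit equation into
\[
2-2\eta_j^2 \;=\; \bigl(\pi(j-\tfrac{1}{2})+2\eta_j\bigr)\bigl(2\eta_j+\O(\eta_j^3)\bigr).
\]
A standard bootstrap (first $\eta_j=\O(1/j)$, then refined) yields $\eta_j=1/(\pi(j-\tfrac{1}{2}))+\O(1/j^2)$.

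Finally, with $\xi_j^{\infty}=\pi(2j-1)/4+\eta_j$ and $h(\xi_j^{\infty})^2=2-2\eta_j^2$, one reads off
\[
\tau_j^{\infty}=\frac{\xi_j^{\infty}\,h(\xi_j^{\infty})^2}{2\lambda^2}=\frac{1}{\lambda^2}\Bigl(\frac{\pi(2j-1)}{4}+\eta_j\Bigr)\bigl(1-\eta_j^2\bigr)=\tau_j+\O\!\left(\frac{1}{j\lambda^2}\right),
\]
which is the claim. No serious analytic obstacle is expected beyond this implicit-function expansion; the main care point is simply to verify that the critical point selected inside $I_j$ is indeed $\xi_j^{\infty}$ and not the kink at $\xi=\pi j/2$, which is a local minimum of $\tau(\xi)$ corresponding to the birth time $\widetilde{\tau}_{2j}$. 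This follows directly from the monotonicity and sign analysis already established in the proof of \Cref{thm:bifurcations}.
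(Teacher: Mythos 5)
Your proof is correct, and it reaches the same endpoint as the paper's through essentially the same organizing idea: characterize $\tau_j^\infty$ via the critical point of $\tau(\xi)=\xi\,h(\xi)^2/(2\lambda^2)$ on the branch attached to $\xi=\frac{\pi(2j-1)}{4}$, then expand there. The difference is in how the expansion is carried out. You simplify via the trigonometric identity $h(\xi)^2=1+|\sin(2\xi)|$, reduce $\tau'(\xi)=0$ to the explicit scalar equation $1+\sin(2\epsilon)=(\pi j-2\epsilon)\cos(2\epsilon)$, substitute $\epsilon=\pi/4-\eta$, and bootstrap a Taylor expansion to get $\eta_j=1/(\pi(j-\tfrac12))+\O(1/j^2)$. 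The paper instead works with the vanishing-denominator form $h(\xi)+2\xi h'(\xi)=0$, sandwiches $h'(\xi_{2j-1}(\tau_j^\infty))$ in the interval $(-\sqrt{2}/(\pi(j-\tfrac12)),\,-1/(\pi j))$, and then exploits the differential identity $h''=-h$ together with $1\le h\le\sqrt2$ to estimate $\xi_{2j-1}(\tau_j^\infty)-\frac{\pi}{2}(j-\tfrac12)$ by comparing $h'$ to the integral of $h''$. Both routes localize $\xi_j^\infty$ to within $\O(1/j)$ of $\frac{\pi(2j-1)}{4}$ and then plug into $\tau(\xi)$; your version yields a slightly sharper asymptotic for the location of the critical point, while the paper's bounds are coarser but arguably quicker since they avoid explicit Taylor series. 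Two small remarks: uniqueness of the root of $\tau'(\xi)=0$ in $I_j$ is better attributed to the computation $\tau''(\xi_0)<0$ in the proof of \Cref{thm:bifurcations}\,\emph{(iii)} (which forces all critical points to be local maxima, hence at most one between consecutive kinks) rather than to \emph{(vi)}, which only gives existence; and your final display should carry an $\O(\eta_j^4)$ correction in the factor $(1-\eta_j^2)$, though this does not affect the stated $\O(1/(j\lambda^2))$ conclusion.
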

\begin{proof}
By \Cref{thm:bifurcations}, we know that $y_{2j-1}$ and $y_{2j}$ collide at time $\tau_j^{\infty}\in (\tau_j,\tau_{j+1})$. 
Let 
\[
\xi_{2j-1}(\tau):= 2\tau y_{2j-1}(\tau)^2,\quad \xi_{2j}(\tau):= 2\tau y_{2j}(\tau)^2.
\]

By \eqref{eq:xi_der_2j}, $\xi_{2j}'(\tau)$ is negative at birth ($\tau=\widetilde{\tau}_j$) and must stay so until collision. Since $\widetilde{\tau}_j<\tau_j<\tau$,
\[
\xi_{2j}(\tau) < \xi_{2j}(\tau_j) < \xi_{2j}(\widetilde{\tau}_j )  = \frac{\pi}{2}\, j.
\]
An analogous argument with $y_{2j-1}$ shows that for $\tau>\tau_j$
\[
\xi_{2j-1}(\tau)> \xi_{2j-1}(\tau_j) = \frac{\pi}{2}\, \left( j -\frac{1}{2}\right).
\]
The fact that $y_{2j-1}(\tau)\leq y_{2j}(\tau)$ yields
\begin{equation}\label{eq:first_bound}
\frac{\pi}{2}\, \left( j -\frac{1}{2}\right) < \xi_{2j-1}(\tau_j^{\infty} ) <  \frac{\pi}{2}\, j.
\end{equation}
In particular, $h$ is differentiable at all such values.

On the other hand, the denominator \eqref{eq:denom} of $y_{2j-1}'$ must vanish at collision time, which yields the equality:
\[
2\,  \xi_{2j-1}(\tau_j^{\infty} ) \, h'\left(  \xi_{2j-1}(\tau_j^{\infty} ) \right) = -h( \xi_{2j-1}(\tau_j^{\infty} )) \in (-\sqrt{2},-1).
\]
Using \eqref{eq:first_bound},
\begin{equation}\label{eq:second_bound}
h'\left(  \xi_{2j-1}(\tau_j^{\infty}) \right) \in \left(  - \frac{\sqrt{2}}{\pi\, \left( j -\frac{1}{2}\right)},\ - \frac{1}{\pi\, j}\right).
\end{equation}
For $j$ large enough, this interval is contained in $(-2\epsilon,-\epsilon)$, and $[h']^{-1}(-2\epsilon,-\epsilon)$ contains only points of differentiability of the function $h'$. As a result, we may write
\[
h'\left(  \xi_{2j-1}(\tau_j^{\infty} ) \right) = \int_{\frac{\pi}{2}\, \left( j -\frac{1}{2}\right)}^{ \xi_{2j-1}(\tau_j^{\infty} )} h''(\xi)\, d\xi = - \int_{\frac{\pi}{2}\, \left( j -\frac{1}{2}\right)}^{ \xi_{2j-1}(\tau_j^{\infty} )} h(\xi)\, d\xi
\]
From \eqref{eq:second_bound} and the fact that $h(\xi)\geq 1$, it follows that:
\[
 - \frac{\sqrt{2}}{\pi\, \left( j -\frac{1}{2}\right)} \leq  - \int_{\frac{\pi}{2}\, \left( j -\frac{1}{2}\right)}^{ \xi_{2j-1}(\tau_j^{\infty} )} h(\xi)\, d\xi\leq - \left[  \xi_{2j-1}(\tau_j^{\infty} ) - \frac{\pi}{2}\, \left( j -\frac{1}{2}\right)\right]
\]
From \eqref{eq:second_bound} and the fact that $h(\xi)\leq \sqrt{2}$, it follows that: 
\[
- \frac{1}{\pi\, j}\geq - \int_{\frac{\pi}{2}\, \left( j -\frac{1}{2}\right)}^{ \xi_{2j-1}(\tau_j^{\infty} )} h(\xi)\, d\xi\geq 
-\sqrt{2}\, \left[  \xi_{2j-1}(\tau_j^{\infty} ) - \frac{\pi}{2}\, \left( j -\frac{1}{2}\right)\right].
\]

Combining these bounds, we find that 
\begin{equation}\label{eq:third_bound}
\frac{1}{\sqrt{2}\,\pi\, j} \leq \left[  \xi_{2j-1}(\tau_j^{\infty} ) - \frac{\pi}{2}\, \left( j -\frac{1}{2}\right)\right]\leq \frac{\sqrt{2}}{\pi\, \left( j -\frac{1}{2}\right)} .
\end{equation}

Letting $\xi_{2j-1}^{\infty} := \xi_{2j-1}(\tau_j^{\infty} )$, we have that 
\[
\tau_j^{\infty} -\tau_j = 
%\tau (\xi_j^{\infty})  - \tau (\xi_j) = 
\frac{\xi_{2j-1}^{\infty}  h (\xi_{2j-1}^{\infty} )^2 -  \frac{\pi}{2}\, \left( j -\frac{1}{2}\right)  h( \frac{\pi}{2}\, \left( j -\frac{1}{2}\right))^2}{2\lambda^2} 
= \frac{\xi_j^{\infty} h (\xi_j^{\infty})^2 -  \pi  \left( j -\frac{1}{2}\right)}{2\lambda^2} 
\]
By \eqref{eq:third_bound} and the fact that $|h'|\leq 1$,
\[
|h (\xi_j^{\infty}) - \sqrt{2}| = |h (\xi_j^{\infty}) - h( \frac{\pi}{2}\, \left( j -\frac{1}{2}\right))|= \Big |\int_{ \frac{\pi}{2}\, \left( j -\frac{1}{2}\right)}^{\xi_j^{\infty}} h'(\zeta) \, d\zeta \Big | \leq |\xi_j^{\infty} -  \frac{\pi}{2}\, \left( j -\frac{1}{2}\right)| \leq  \frac{\sqrt{2}}{\pi\, \left( j -\frac{1}{2}\right)} .
\]
Using this bound together with \eqref{eq:third_bound} yields
\[
\tau_j^{\infty} -\tau_j   =   \frac{\xi_j^{\infty} - \frac{\pi}{2}  \left( j -\frac{1}{2}\right)}{\lambda^2} + \O\left( \frac{1}{j\, \lambda^2}\right) 
= \O\left( \frac{1}{j\, \lambda^2}\right) .
\]
\end{proof}

The fact that $\tau_j^{\infty}-\tau_j=o(1)$ as $j\rightarrow\infty$ suggests that $y_{2j-1}(\tau_j^{\infty})$ should get closer and closer to $y_{2j-1}(\tau_j) =\frac{\lambda}{\sqrt{2}}$ as $j\rightarrow\infty$. For this reason, it is convenient to rewrite our problem into coordinates that zoom into this point. This is our next result:

\begin{thm}\label{thm:mu} Fix $\lambda>0$. Let $j\geq 4$ and consider the coordinates 
\begin{equation}
\xi_{2j-1}(\tau) =2\tau y_{2j-1}(\tau)^2 , \qquad \xi_{2j}(\tau) =2\tau y_{2j}(\tau)^2, \qquad \tau \in [\tau_{j-2}, \tau_j].
\end{equation}
Then there exists some $j_0\in \N$ (independent of $\lambda$) such that for all $j\geq j_0$  we may write 
\begin{equation}\label{eq:xi_mu_formula}
\begin{split}
\xi_{2j-1}(\tau) & = \frac{\pi}{2}\, j - \frac{\pi}{4} - \mu_{2j-1}(\tau),\\
\xi_{2j}(\tau) & = \frac{\pi}{2}\, j - \frac{\pi}{4} + \mu_{2j}(\tau),
\end{split}
\end{equation}
where
\begin{equation}\label{eq:mu_bounds}
\lambda^2\, \frac{\tau_j - \tau}{5\, j}  \leq \mu_{2j-1}(\tau)\leq \frac{4}{\sqrt{j}},\qquad\quad
\frac{1}{\pi\, j}  \leq \mu_{2j}(\tau)\leq \frac{4}{\sqrt{j}} 
\end{equation}
for all $\tau \in [\tau_{j-2}, \tau_j]$.
\end{thm}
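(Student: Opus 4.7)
The plan is to study the fundamental identity $\xi h(\xi)^2 = 2\tau\lambda^2$ (the curve $\widetilde{F} = 0$ from \eqref{eq:def_F2}) locally near the point $\xi_0 := \pi j/2 - \pi/4$, at which $h$ attains its local maximum $\sqrt{2}$. Since $\xi_0$ is the midpoint of the smoothness interval $(\pi(j-1)/2, \pi j/2)$ of $h$, a direct computation using the fixed signs of $\cos\xi$ and $\sin\xi$ there yields $h(\xi_0 + \nu) = \sqrt{2}\,|\cos\nu|$ for $|\nu| \leq \pi/4$. Substituting $\xi = \xi_0 + \nu$ transforms the curve equation into the clean relation
\[
(\xi_0 + \nu)\cos^2\nu = \tau\lambda^2,
\]
which I take as my working identity. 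With this in hand the theorem reduces to identifying which branch of $\nu(\tau)$ corresponds to each of $y_{2j-1}, y_{2j}$, and then extracting size estimates.

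To identify branches I differentiate $\tau(\nu) = (\xi_0+\nu)\cos^2\nu/\lambda^2$, obtaining that $\tau$ is strictly increasing on $(-\pi/4, \nu_*)$ and strictly decreasing on $(\nu_*, \pi/4)$, with $\nu_* \in (0,\pi/4)$ determined by $\tan\nu_* = 1/[2(\xi_0+\nu_*)]$ (so $\nu_* \sim 1/(2\xi_0)$) and $\tau(\nu_*)$ equal to the collision time $\tau_j^\infty$. By \Cref{thm:bifurcations}\emph{(v)} the solution $y_{2j-1}$ passes through $\lambda/\sqrt{2}$ at $\tau = \tau_j$, i.e.\ $\nu = 0$, while being born at $(\widetilde{\tau}_{j-1}, \lambda)$, i.e.\ $\nu = -\pi/4$; continuity of the curve pins $\nu_{2j-1}(\tau) \in (-\pi/4, 0]$ for $\tau \in [\tau_{j-2}, \tau_j]$, provided $j_0$ is large enough that $\tau_{j-2} \geq \widetilde{\tau}_{j-1}$. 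Similarly, $y_{2j}$ is born at $(\widetilde{\tau}_j, \lambda)$ with $\nu = \pi/4$, and by \Cref{thm:bifurcations}\emph{(iv)} stays strictly above $\lambda/\sqrt{2}$, so its branch lies in the portion of $(\nu_*, \pi/4)$ with $\nu \geq \nu_j$, where $\nu_j > 0$ is the positive root of $(\xi_0+\nu_j)\cos^2\nu_j = \xi_0$; existence on $[\tau_{j-2}, \tau_j]$ again requires $\tau_{j-2} \geq \widetilde{\tau}_j$, forcing a (possibly) larger $j_0$. Setting $\mu_{2j-1} := -\nu_{2j-1}$ and $\mu_{2j} := \nu_{2j}$ then yields \eqref{eq:xi_mu_formula}.

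For the bounds on $\mu_{2j-1}$ I rearrange the working identity as
\[
\lambda^2(\tau_j - \tau) = \xi_0 \sin^2 \mu_{2j-1} + \mu_{2j-1} \cos^2 \mu_{2j-1}.
\]
Dropping the nonnegative $\mu\cos^2\mu$ term gives $\xi_0 \sin^2 \mu_{2j-1} \leq \lambda^2(\tau_j - \tau) \leq \pi$ on $[\tau_{j-2}, \tau_j]$, hence $\sin^2\mu_{2j-1} \leq 4/(2j-1)$; combining with the concavity inequality $\sin\mu \geq (2\sqrt{2}/\pi)\mu$ on $[0,\pi/4]$ yields the upper bound $\mu_{2j-1} \leq \pi/\sqrt{2(2j-1)} \leq 4/\sqrt{j}$. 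Conversely, using $\sin^2 \mu \leq \mu$ and $\cos^2\mu \leq 1$ (valid since $\mu \leq \pi/4$), the right-hand side is at most $(\xi_0 + 1)\mu_{2j-1} \leq 5j\,\mu_{2j-1}$ for $j \geq j_0$ (since $\xi_0 + 1 \leq \pi j/2 + 1 \leq 5j$), which gives $\mu_{2j-1} \geq \lambda^2(\tau_j - \tau)/(5j)$.

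The bounds on $\mu_{2j}$ follow from the dual identity $(\xi_0 + \mu_{2j})\sin^2\mu_{2j} = \xi_0 + \mu_{2j} - \tau\lambda^2$. For the upper bound, on $[\tau_{j-2}, \tau_j]$ one has $\xi_0 + \mu - \tau\lambda^2 \leq \pi + \mu$, so $\sin^2 \mu_{2j} \leq (\pi + \mu_{2j})/\xi_0$; once $\mu_{2j} \leq \pi/4$ is in hand this gives $\sin^2 \mu_{2j} \leq 5\pi/(4\xi_0) \leq 5/j$, and the same concavity argument yields $\mu_{2j} \leq 4/\sqrt{j}$. For the lower bound, $\tau\lambda^2 \leq \xi_0$ on $[\tau_{j-2}, \tau_j]$ gives $(\xi_0 + \mu_{2j})\sin^2\mu_{2j} \geq \mu_{2j}$, which combined with $\sin^2\mu \leq \mu^2$ produces $(\xi_0 + \mu_{2j})\mu_{2j} \geq 1$; the previously-established $\mu_{2j} \leq 4/\sqrt{j}$ forces $\xi_0 + \mu_{2j} \leq \pi j$ for $j \geq j_0$, so $\mu_{2j} \geq 1/(\pi j)$. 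The only real obstacle is a mild bootstrap to keep each $\mu$ in $[0,\pi/4]$ (needed to invoke the concavity inequality for $\sin$), but this is built into the existence intervals of $y_{2j-1}, y_{2j}$ from \Cref{thm:bifurcations} together with the assumption $j \geq j_0$.
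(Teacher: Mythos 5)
Your proposal is correct, but it takes a genuinely different route from the paper. The paper proves \Cref{thm:mu} by Taylor-expanding $h$ around the midpoint $\frac{\pi}{2}j-\frac{\pi}{4}$ (using $h''=-h$ at smooth points), recasting the implicit relation as a fixed-point problem $\mu=\Phi_{\pm}(\mu)$ on the ball $\{\|v\|_{L^\infty}\le 4/\sqrt{j}\}$, running a contraction-mapping argument with explicit remainder estimates such as \eqref{eq:remainder_upperb}--\eqref{eq:remainder_lowerb}, and then reading off the bounds \eqref{eq:mu_bounds} from the closed form of $\Phi_{\pm}$; branch matching is handled by local uniqueness plus \Cref{thm:bifurcations}. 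You instead exploit that on the quadrant $[\frac{\pi}{2}(j-1),\frac{\pi}{2}j]$ the signs of $\cos$ and $\sin$ are fixed, so $h(\xi_0+\nu)=\sqrt{2}\cos\nu$ exactly, turning the curve \eqref{eq:def_F2} into the elementary relation $(\xi_0+\nu)\cos^2\nu=\tau\lambda^2$; monotonicity of $\tau(\nu)$ on the two half-intervals, together with the birth points and monotonicity facts of \Cref{thm:bifurcations}, identifies which branch is $y_{2j-1}$ and which is $y_{2j}$, and the rearranged identities $\lambda^2(\tau_j-\tau)=\xi_0\sin^2\mu_{2j-1}+\mu_{2j-1}\cos^2\mu_{2j-1}$ and $(\xi_0+\mu_{2j})\sin^2\mu_{2j}=\xi_0+\mu_{2j}-\tau\lambda^2$ give \eqref{eq:mu_bounds} from $\sin\mu\ge\frac{2\sqrt2}{\pi}\mu$ and $\sin\mu\le\mu$ on $[0,\pi/4]$ (your numerical constants do land inside the stated bounds). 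What your approach buys is brevity and transparency: no series remainders, no contraction constant to verify, and the bounds come out as exact algebraic consequences; what the paper's fixed-point scheme buys is robustness, since it would survive perturbations of $h$ for which no exact trigonometric simplification is available. Two small points to tighten: the fact that $y_{2j}$ stays above $\lambda/\sqrt{2}$ (equivalently $\xi_{2j}>\xi_0$) is established inside the proof of \Cref{thm:bifurcations}\emph{(iv)}--\emph{(vi)} rather than in the literal statement of \emph{(iv)}, so cite it accordingly (your self-contained argument via $\nu_j>\nu_*>0$ on the decreasing branch already covers what you need); and the confinement $\mu\in[0,\pi/4]$ is not really a bootstrap at all --- it follows directly from your branch identification, so you can drop that caveat.
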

\begin{proof} 
{\bf Step 1. Implicit equation for $\mu$.}
The estimates on $\mu_{2j-1}$ and $\mu_{2j}$ follow from a fixed point argument, which we describe bellow. Recall that 
\[
\tau_{j-2} = \frac{\pi}{2\lambda^2}\, j - \frac{5\pi}{4\lambda^2} , \qquad \tau_{j} = \frac{\pi}{2\lambda^2}\, j - \frac{\pi}{4\lambda^2}
\]
We therefore introduce the following notation:
\[
\tau =  \frac{\pi}{2\lambda^2}\, j - \frac{3\pi}{4\lambda^2} + \frac{\zeta}{2\lambda^2} \qquad \mbox{with}\ \zeta\in \left[ - \pi , \pi\right].
\]
In this range of $\zeta$, the variable $\tau$ varies in the desired interval $[\tau_{j-2},\tau_j]$.

As detailed in \Cref{thm:curve_tau_xi}, $\xi_{2j-1}(\tau)$ (and $\xi_{2j}$) satisfies the equation:
\[
\xi_{2j-1}(\tau) \, h( \xi_{2j-1}(\tau))^2 = 2\tau \, \lambda^2.
\]
Using \eqref{eq:xi_mu_formula}, we rewrite this equation in terms of $\mu_{2j-1}$ and $\zeta$.
\begin{equation}\label{eq:implicit_mu}
\left( \frac{\pi}{2}\, j - \frac{\pi}{4} - \mu_{2j-1}(\zeta)\right) \, h \left( \frac{\pi}{2}\, j - \frac{\pi}{4} - \mu_{2j-1}(\zeta)\right)^2 =  \pi j - \frac{3\pi}{2} + \zeta \qquad \mbox{with}\ \zeta\in [- \pi , \pi].
\end{equation}
The existence of $\mu_{2j-1}$ is guaranteed by that of $\xi_{2j-1}$ which follows from \Cref{thm:bifurcations} in view of the range for $\tau$. Moreover, the function $h$ is $C^{\infty}$ in the interior of the interval where $\xi_{2j-1}$ lives. To carry out a more precise analysis, however, it is convenient to write $h \left( \frac{\pi}{2}\, j - \frac{\pi}{4} - \mu_{2j-1}(\zeta)\right)$ as a Taylor series. Indeed, note that 
\[
h \left( \frac{\pi}{2}\, j - \frac{\pi}{4} \right) =\sqrt{2}, \qquad  h' \left( \frac{\pi}{2}\, j - \frac{\pi}{4} \right)= 0.
\]
Note also that, in all $C^{\infty}$ points, we have that $h''=-h$. This implies that
\[
h^{(2n)} \left( \frac{\pi}{2}\, j - \frac{\pi}{4} \right) =(-1)^n\, \sqrt{2}, \qquad  h^{(2n+1)} \left( \frac{\pi}{2}\, j - \frac{\pi}{4} \right)= 0 \qquad \mbox{for all}\ n\geq 0.
\]
In particular, we have that 
\[
h \left( \frac{\pi}{2}\, j - \frac{\pi}{4} - \mu_{2j-1}(\zeta)\right) = \sqrt{2}\, \sum_{n=0}^{\infty} \frac{(-1)^n\, \mu_{2j-1}(\zeta)^{2n}}{(2n)!} ,
\]
as long as the series is well-defined, which we will soon show.

Therefore, we may formally rewrite \eqref{eq:implicit_mu} as
\begin{equation}\label{eq:implicit_mu_2}
\left( \pi\, j - \frac{\pi}{2} - 2\mu_{2j-1}(\zeta)\right) \, \left[\sum_{n=0}^{\infty} \frac{(-1)^n\, \mu_{2j-1}(\zeta)^{2n}}{(2n)!}\right]^2 =  \pi j - \frac{3\pi}{2} + \zeta \qquad \mbox{with}\ \zeta\in [- \pi , \pi].
\end{equation}

Gathering all terms up to cubic powers of $\mu_{2j-1}$ and writing the rest as a remainder leads to the formal identity:
\begin{equation}\label{eq:implicit_mu_3}
\pi - \zeta - 2 \mu_{2j-1}(\zeta) - \left( \pi j + \frac{\pi}{2} - 2\mu_{2j-1} (\zeta)\right) \, \mu_{2j-1}(\zeta)^2+ \Rc_{j,-} (\mu_{2j-1})(\zeta)=0,
\end{equation}
where, for any continuous function $v:[-\pi,\pi]\rightarrow \R$, we define
\begin{equation}\label{eq:R_mu}
\Rc_{j,\pm} (v)(\zeta) = \left( \pi\, j - \frac{\pi}{2} \pm 2v(\zeta)\right) \, \left[ \left(\sum_{n=1}^{\infty} \frac{(-1)^n\, v(\zeta)^{2n}}{(2n)!}\right)^2 + 2\, \sum_{n=2}^{\infty} \frac{(-1)^n\, v(\zeta)^{2n}}{(2n)!}\right].
\end{equation}

Finally, we may rewrite \eqref{eq:implicit_mu_3} as a contraction mapping problem $\mu_{2j-1}= \Phi_{-} (\mu_{2j-1})$ where
\begin{equation}\label{eq:Phi_mu}
\Phi_{\pm} (v)(\zeta) = \frac{\sqrt{[\pi\, j - \frac{\pi}{2} \pm 2v(\zeta)] \, [\pi - \zeta + \Rc_{j,\pm} (v)(\zeta) ] + 1} \pm 1}{\pi\, j - \frac{\pi}{2} \pm 2v(\zeta)}
\end{equation}
We note that a similar argument casts $\mu_{2j}$ as the solution to $\mu_{2j}= \Phi_{+} (\mu_{2j})$. We thus concentrate on $\Phi_{-}$ from now on.

The rest of the proof consists of showing that \eqref{eq:Phi_mu} is a well-defined contraction on the ball
\begin{equation}\label{eq:spaceX}
X= \left\{ v \in L^{\infty} ([-\pi,\pi]) \ \Big |\  \sup_{\zeta\in [-\pi,\pi]} |v(\zeta)| \leq \frac{4}{\sqrt{j}} \right\}
\end{equation}
for $j$ large enough. The local uniqueness of solutions to \eqref{eq:implicit_mu} will then guarantee that the solution found via the contraction mapping theorem necessarily satisfies \eqref{eq:xi_mu_formula}, i.e. $\mu_{2j-1}$ truly corresponds to $\xi_{2j-1}$.

{\bf Step 2. Bounds on remainder.} 
For $v\in X$, 
\begin{equation}\label{eq:remainder_upperb}
\begin{split}
|\Rc_{j,\pm} (v)(\zeta)| & \leq \pi\, j \, \left[ \left(\sum_{n=1}^{\infty} \frac{|v(\zeta)|^{2n}}{(2n)!}\right)^2 + 2\, \sum_{n=2}^{\infty} \frac{|v(\zeta)|^{2n}}{(2n)!}\right]\\
& \leq  \frac{64\pi}{j}\, \left[ \left(\sum_{n=1}^{\infty} \frac{8^{n-1}}{j^{n-1}(2n)!}\right)^2 + 2\, \sum_{n=2}^{\infty} \frac{8^{n-2}}{j^{n-2}(2n)!}\right]  \leq  \frac{16\pi}{j}\, (e^{16/j}  + e^{8/j})< \frac{64\pi}{j}
\end{split}
\end{equation}
for all $j\geq j_0$ large enough. Similarly, we have the following lower bound for $v\in X$
\begin{equation}\label{eq:remainder_lowerb}
\begin{split}
\Rc_{j,\pm} (v)(\zeta) & \geq j \, \left[\frac{v(\zeta)^4}{12}+  2\, \sum_{n=3}^{\infty} \frac{(-1)^n\, v(\zeta)^{2n}}{(2n)!}\right] \geq  j \, \left[\frac{v(\zeta)^4}{12} - \frac{16}{j}\, v(\zeta)^4 \, \sum_{n=3}^{\infty} \frac{ 8^{n-3}}{j^{n-3} (2n)!}\right]\\
& \geq  j \, v(\zeta)^4\, \left(\frac{1}{12} - \frac{2}{3 j}\, e^{8/j} \right) \geq \frac{j}{24} \, v(\zeta)^4\geq 0
\end{split}
\end{equation}
for all $j\geq j_0$ large enough. In particular, this lower bound shows that \eqref{eq:Phi_mu} is well-defined for all $v\in X$, as the terms inside the square root are non-negative. Moreover, the denominator is strictly positive for $v\in X$ provided $j\geq j_0$ large enough.

{\bf Step 3. Fixed point argument.}

Let us first show that $\Phi_{-} : X \rightarrow X$. Using \eqref{eq:Phi_mu}, \eqref{eq:remainder_upperb} and the fact that $v\in X$,
\[
\begin{split}
|\Phi_{-} (v)(\zeta)| & \leq \sqrt{\frac{\pi - \zeta + \Rc_{j,-} (v)(\zeta)}{\pi\, j - \frac{\pi}{2} - 2v(\zeta)} + \frac{1}{[\pi\, j - \frac{\pi}{2} - 2v(\zeta)]^2}}+ \frac{2}{\pi j}\\
& \leq \sqrt{\frac{2 + \frac{64}{j}}{j - \frac{1}{2} - \frac{8}{\pi \sqrt{j}}} + \frac{1}{[\pi\, j - \frac{\pi}{2} - \frac{8}{\sqrt{j}}]^2}}+ \frac{2}{\pi j} < \frac{4}{\sqrt{j}}
\end{split}
\]
for all $j\geq j_0$ sufficiently large.

In order to show the Lipschitz property, it suffices to bound the derivative of $\Phi_{-} (x)$ uniformly in $x\in [-4/\sqrt{j}, 4/\sqrt{j}]$. A straight-forward computation shows that 
\[
|\Phi_{-}'(x)| = \O( j^{-1} + |\Rc_{j,-}'(x)|) \qquad \mbox{for}\ x\in \left[-\frac{4}{\sqrt{j}}, \frac{4}{\sqrt{j}}\right].
\]
A computation in the spirit of \eqref{eq:remainder_upperb} shows that $|\Rc_{j,-}'(x)| =\O( j^{-1/2})$ uniformly in $x\in [-4/\sqrt{j}, 4/\sqrt{j}]$. In particular, this implies that $|\Phi_{-}'(x)| = \O(j^{-1/2})$. As a result, the Lipchistz constant of $\Phi_{-}$ can be made strictly smaller than 1 for $j$ sufficiently large.

By the contraction mapping theorem, there exists $\mu_{2j-1}\in X$ such that $\mu_{2j-1}= \Phi_{-} (\mu_{2j-1})$.
The upper bounds in \eqref{eq:mu_bounds} then follow from the fact that $\mu_{2j-1},\mu_{2j}$ belong to the space $X$.
In order to prove the lower bound, we use the upper bound \eqref{eq:mu_bounds} and the positivity of the remainder \eqref{eq:remainder_lowerb},
\[
\begin{split}
\mu_{2j-1}(\zeta) & = \frac{\sqrt{[\pi\, j - \frac{\pi}{2} - 2\mu_{2j-1}(\zeta)] \, [\pi - \zeta + \Rc_{j,-} (\mu_{2j-1})(\zeta) ] + 1} - 1}{\pi\, j - \frac{\pi}{2} - 2\mu_{2j-1}(\zeta)}\\
& \geq \sqrt{\frac{\pi - \zeta}{\pi\, j - \frac{\pi}{2} +\frac{8}{\sqrt{j}}} + \frac{1}{(\pi\, j - \frac{\pi}{2}+  \frac{8}{\sqrt{j}})^2}} - \frac{1}{\pi\, j - \frac{\pi}{2} + \frac{8}{\sqrt{j}}} \geq \frac{\pi - \zeta}{10j}
\end{split}
\]
for $j\geq j_0$ large enough. Using the fact that $\pi - \zeta = 2\lambda^2 ( \tau_j - \tau)$ yields \eqref{eq:mu_bounds}.

Similarly, we find that
\[
\mu_{2j}(\zeta)  = \frac{\sqrt{[\pi\, j - \frac{\pi}{2} + 2\mu_{2j}(\zeta)] \, [\pi - \zeta + \Rc_{j,+} (\mu_{2j})(\zeta) ] + 1} + 1}{\pi\, j - \frac{\pi}{2} + 2\mu_{2j}(\zeta)}\geq \frac{1}{\pi j},
\]
which concludes the proof of \eqref{eq:mu_bounds}.
\end{proof}

The previous theorem has various consequences. The first one is a lower bound for the distance between consecutive solutions to the implicit equation. 

\begin{cor}\label{thm:asymp_increment} Fix $\lambda>0$. There exists $j_0\in\N$ (independent of $\lambda$) such that for all $j\geq j_0$ and for $\tau\in [\tau_{j-2},\tau_j^{\infty}]$, we have
\begin{equation}
\left[ y_{2j}(\tau)-y_{2j-1}(\tau) \right] + \left[ y_{2j+2}(\tau)-y_{2j+1}(\tau) \right] \gtrsim \frac{\lambda}{j^2} \gtrsim \frac{1}{\lambda^3 \, \tau^2}
\end{equation}
where the implicit constant is independet of $\tau$, $j$ and $\lambda$.
\end{cor}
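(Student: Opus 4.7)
The plan is to convert the lower bound on $\mu_{2j}(\tau)+\mu_{2j-1}(\tau)$ supplied by \Cref{thm:mu} into a lower bound on the $y$-coordinate differences. Since $y=\sqrt{\xi/(2\tau)}$, one has the algebraic identity
\begin{equation*}
y_{2j}(\tau)-y_{2j-1}(\tau)=\frac{\xi_{2j}(\tau)-\xi_{2j-1}(\tau)}{\sqrt{2\tau}\,\bigl(\sqrt{\xi_{2j}(\tau)}+\sqrt{\xi_{2j-1}(\tau)}\bigr)}=\frac{\mu_{2j}(\tau)+\mu_{2j-1}(\tau)}{\sqrt{2\tau}\,\bigl(\sqrt{\xi_{2j}(\tau)}+\sqrt{\xi_{2j-1}(\tau)}\bigr)},
\end{equation*}
where the second equality uses \eqref{eq:xi_mu_formula}. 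I would then bound numerator and denominator separately.

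For the numerator, the uniform lower bound $\mu_{2j}(\tau)\geq 1/(\pi j)$ from \eqref{eq:mu_bounds} is available throughout $[\tau_{j-2},\tau_j]$. For the denominator, the upper bound $|\mu|\leq 4/\sqrt{j}$ combined with \eqref{eq:xi_mu_formula} yields $\sqrt{\xi_{2j}(\tau)}+\sqrt{\xi_{2j-1}(\tau)}=O(\sqrt{j})$, while $\tau\leq\tau_j\lesssim j/\lambda^2$ gives $\sqrt{2\tau}=O(\sqrt{j}/\lambda)$. Putting these together,
\begin{equation*}
y_{2j}(\tau)-y_{2j-1}(\tau)\gtrsim\frac{1/j}{(\sqrt{j}/\lambda)\cdot\sqrt{j}}=\frac{\lambda}{j^2}\qquad\text{for all }\tau\in[\tau_{j-2},\tau_j].
\end{equation*}
Running the same argument with $j$ replaced by $j+1$ produces the twin estimate $y_{2j+2}(\tau)-y_{2j+1}(\tau)\gtrsim\lambda/(j+1)^2\gtrsim\lambda/j^2$ for $\tau\in[\tau_{j-1},\tau_{j+1}]$.

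To conclude, by \Cref{thm:bifurcations}\,(vi) one has $\tau_j^{\infty}\in(\tau_j,\tau_{j+1})$, so the two intervals $[\tau_{j-2},\tau_j]$ and $[\tau_{j-1},\tau_{j+1}]$ jointly cover $[\tau_{j-2},\tau_j^{\infty}]$; hence at every such $\tau$ at least one of the two summands in the corollary is bounded below by $\lambda/j^2$. The remaining inequality $\lambda/j^2\gtrsim 1/(\lambda^3\tau^2)$ follows from $\tau\leq\tau_j^{\infty}\leq\tau_{j+1}\lesssim j/\lambda^2$, i.e.\ $j\gtrsim\lambda^2\tau$. The main technical care is required near the collision time $\tau=\tau_j^{\infty}$, where the first pair has essentially collapsed and the estimate must rely on the second pair; the asymptotic $\tau_j^{\infty}=\tau_j+O(1/(j\lambda^2))$ from \Cref{thm:jump_points} keeps $\tau_j^{\infty}$ safely inside $[\tau_{j-1},\tau_{j+1}]$, which is precisely where \Cref{thm:mu} is available for the shifted index $j+1$.
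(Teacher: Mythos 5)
Your argument is essentially the paper's own proof: both convert the $\mu$-bounds of \Cref{thm:mu} into $y_{2j}(\tau)-y_{2j-1}(\tau)\gtrsim \lambda/j^2$ on $[\tau_{j-2},\tau_j]$ (your conjugate identity is the same estimate as the paper's mean-value step), apply this at indices $j$ and $j+1$, and use $\tau_j^{\infty}<\tau_{j+1}$ to cover $[\tau_{j-2},\tau_j^{\infty}]$. One small correction: the last inequality $\lambda/j^2\gtrsim 1/(\lambda^3\tau^2)$ is equivalent to $j\lesssim \lambda^2\tau$, so it follows from the lower bound $\tau\geq\tau_{j-2}\gtrsim j/\lambda^2$, not from the upper bound $\tau\leq\tau_{j+1}\lesssim j/\lambda^2$ that you invoke; since $\tau\asymp j/\lambda^2$ throughout $[\tau_{j-2},\tau_j^{\infty}]$, the conclusion is unaffected.
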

\begin{proof} By \Cref{thm:mu}, for all $j\geq j_0$ sufficiently large and for all $\tau\in [\tau_{j-2},\tau_j]$ we may write 
\[
\begin{split}
 y_{2j}(\tau)-y_{2j-1}(\tau) & = \frac{1}{\sqrt{2\tau}} \, \left( \sqrt{\xi_{2j}(\tau)} -  \sqrt{\xi_{2j-1}(\tau)}\right) \\
 & \geq  \frac{1}{\sqrt{2\tau}} \,  \left[ \xi_{2j}(\tau)- \xi_{2j-1}(\tau)\right]\, \inf_{\eta \in [\xi_{2j-1}(\tau), \xi_{2j}(\tau)]} \frac{1}{2\sqrt{\eta}}\\
 & \geq \frac{1}{\sqrt{2\pi \tau j}} \, \left[ \mu_{2j}(\tau) + \mu_{2j-1}(\tau)\right] \geq \frac{1}{2\pi^2\, \sqrt{\tau} j^{3/2}}.
 \end{split}
\]
In particular, we have that 
\begin{equation}\label{eq:increment}
 y_{2j}(\tau)-y_{2j-1}(\tau)\geq \frac{1}{2\pi^2\, j^{3/2}\, \sqrt{\tau}}\gtrsim \frac{\lambda}{j^2}  \qquad \mbox{for}\ \tau \in [\tau_{j-2},\tau_{j}].
\end{equation}
Using \eqref{eq:increment} evaluated at $j$ and $j+1$, together with the fact that $\tau_j^{\infty}\in [\tau_j,\tau_{j+1})$, yields the desired result.
\end{proof}

The next consequence is that the minimal solution $Y(\tau)$ admits a limit as $\tau\rightarrow\infty$, despite its infinite amount of jumps.

\begin{cor}\label{thm:Y_inf_limit}
Fix $\lambda>0$.  There exists some $j_0\in\N$ and some constant $C>0$ (both independent of $\lambda$), such that for all $j\geq j_0$ we have
\begin{equation}\label{eq:Y_inf_limit_1}
0 \leq  y_{2j+1}(\tau) - \frac{\lambda}{\sqrt{2}} \leq C\, \frac{\lambda}{j},
\end{equation}
uniformly in $\tau\in [\tau_{j-1},\tau_{j+1}]$. In particular, the minimal solution $Y(\tau)$ defined in \eqref{eq:big_Y} satisfies
\begin{equation}\label{eq:Y_inf_limit_2}
\lim_{\tau\rightarrow\infty} Y(\tau) =\frac{\lambda}{\sqrt{2}}.
\end{equation}
\end{cor}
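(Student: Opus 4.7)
The lower bound $y_{2j+1}(\tau) \geq \lambda/\sqrt{2}$ is immediate from \Cref{thm:existence}\emph{(ii)}, which confines every solution of $F(\tau,y)=0$ to the interval $[\lambda/\sqrt{2}, \lambda]$. For the upper bound on the interval $[\tau_{j-1}, \tau_{j+1}]$, the plan is to invoke \Cref{thm:mu} with the index shift $j \mapsto j+1$ (which is valid for $j \geq j_0 - 1$), giving
\[
2\tau\, y_{2j+1}(\tau)^2 = \xi_{2j+1}(\tau) = \tfrac{\pi}{4}(2j+1) - \mu_{2j+1}(\tau), \qquad 0\leq \mu_{2j+1}(\tau) \leq \tfrac{4}{\sqrt{j+1}},
\]
for all $\tau \in [\tau_{j-1}, \tau_{j+1}]$.

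From here the estimate is a short algebraic computation: drop the favorable $-\mu_{2j+1}$ term and lower-bound $\tau$ by $\tau_{j-1} = \tfrac{\pi}{4\lambda^2}(2j-3)$ to obtain $y_{2j+1}(\tau)^2 \leq \tfrac{\lambda^2(2j+1)}{2(2j-3)}$, so that $y_{2j+1}(\tau)^2 - \lambda^2/2 \leq 2\lambda^2/(2j-3) \lesssim \lambda^2/j$. Factoring the difference of squares and using $y_{2j+1}(\tau) + \lambda/\sqrt{2} \geq \sqrt{2}\,\lambda$ then yields \eqref{eq:Y_inf_limit_1} with a constant independent of $\lambda$.

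For the limit \eqref{eq:Y_inf_limit_2}, the approach is as follows. Since $\tau_j^\infty \geq \tau_j \to \infty$, for every large $\tau$ there is a unique $j \geq j_0$ with $\tau \in (\tau_j^\infty, \tau_{j+1}^\infty]$, and then $Y(\tau) = y_{2j+1}(\tau)$. On the subinterval $(\tau_j^\infty, \tau_{j+1}] \subset [\tau_{j-1}, \tau_{j+1}]$ the bound \eqref{eq:Y_inf_limit_1} applies directly. The delicate regime is $\tau \in (\tau_{j+1}, \tau_{j+1}^\infty]$, where $y_{2j+1}$ has turned increasing (\Cref{thm:bifurcations}\emph{(v)}) and where $\tau_{j+1}^\infty$ lies \emph{outside} the admissible range $[\tau_{j-1},\tau_{j+1}]$ of \Cref{thm:mu}. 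My plan to get around this is to exploit the collision identity $y_{2j+1}(\tau_{j+1}^\infty) = y_{2j+2}(\tau_{j+1}^\infty)$ from \Cref{thm:bifurcations}\emph{(vi)}, together with the global monotonic decrease of $y_{2j+2}$ from \Cref{thm:bifurcations}\emph{(iv)}, to write
\[
y_{2j+1}(\tau) \leq y_{2j+1}(\tau_{j+1}^\infty) = y_{2j+2}(\tau_{j+1}^\infty) \leq y_{2j+2}(\tau_{j+1}).
\]
The right-hand side is now at the right endpoint of the admissible range of \Cref{thm:mu} (shifted to index $j+1$), and the same algebraic manipulation as in the first part produces $y_{2j+2}(\tau_{j+1}) \leq \lambda/\sqrt{2} + O(\lambda/j)$.

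The main obstacle, as just indicated, is not the core inequality but the endpoint regime $(\tau_{j+1}, \tau_{j+1}^\infty]$: here $Y$ has already flipped from decreasing to increasing and is no longer described by the local expansion of \Cref{thm:mu} around $\xi = \tfrac{\pi}{2}j - \tfrac{\pi}{4}$, but rather through its colliding partner $y_{2j+2}$, which is controlled by the twin expansion around $\xi = \tfrac{\pi}{2}(j+1) - \tfrac{\pi}{4}$. The collision provides the bridge between the two expansions, and putting the two regimes together yields $0 \leq Y(\tau) - \lambda/\sqrt{2} \lesssim \lambda/j \to 0$ as $\tau \to \infty$.
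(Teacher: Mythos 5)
Your proposal is correct and follows essentially the same route as the paper: the bound \eqref{eq:Y_inf_limit_1} is obtained exactly as in the paper's proof, by applying \Cref{thm:mu} with the index shifted by one, dropping the favorable $-\mu_{2j+1}$ term and lower-bounding $\tau$ by $\tau_{j-1}$, and the limit \eqref{eq:Y_inf_limit_2} is then deduced from the monotonicity and collision structure of \Cref{thm:bifurcations}. The only (harmless) difference is how the increasing stretch $(\tau_{j+1},\tau_{j+1}^{\infty}]$ is handled: you pass through the colliding partner $y_{2j+2}$ and control it via \Cref{thm:mu} at the admissible endpoint $\tau_{j+1}$, whereas the paper bounds $y_{2j+1}(\tau_{j+1}^{\infty})$ by the next odd solution $y_{2j+3}$ and reuses \eqref{eq:Y_inf_limit_1} together with $y_{2k+1}(\tau_k^{\infty})\leq y_{2k+1}(\tau_k)$ — both variants rest on the same ordering and monotonicity facts.
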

\begin{proof} Let $\tau\in [\tau_{j-1},\tau_{j+1}]$. By \Cref{thm:mu}, we have that 
\[
y_{2j+1}(\tau)  = \sqrt{ \frac{\frac{\pi}{2}\, j  + \frac{\pi}{4} - \mu_{2j+1}(\tau)}{2\tau}} \leq \sqrt{ \frac{\frac{\pi}{2}\, j  + \frac{\pi}{4}}{2\tau_{j-1}}} = \sqrt{ \frac{\frac{\pi}{2}\, j  + \frac{\pi}{4}}{ 2\, \frac{\pi}{2\lambda^2} (j-\frac{3}{2})}} = \frac{\lambda}{\sqrt{2}}\, \sqrt{ 1 + \frac{4}{2j-3}}
\]
It follows that
\[
y_{2j+1}(\tau) - \frac{\lambda}{\sqrt{2}} \lesssim \frac{\lambda}{j},
\]
which concludes the proof of \eqref{eq:Y_inf_limit_1}.

In order to prove \eqref{eq:Y_inf_limit_2}, note that the range of $y_{2j+1}(\tau)$ in $\tau \in (\tau_{j}^{\infty}, \tau_{j+1}^{\infty}]$ is contained in 
\[
\left[ \frac{\lambda}{\sqrt{2}}, \max\{ y_{2j+1}(\tau_{j}^{\infty}),y_{2j+1}(\tau_{j+1}^{\infty})\} \right]
\]
 since $y_{2j+1}$ is decreasing until $\tau_{j+1}$ and increasing until it collides. Since $y_{2j+1}<y_{2j+3}$, we have that
\[
\left[ \frac{\lambda}{\sqrt{2}}, \max\{ y_{2j+1}(\tau_{j}^{\infty}),y_{2j+1}(\tau_{j+1}^{\infty})\} \right]\subset \left[ \frac{\lambda}{\sqrt{2}}, \max\{ y_{2j+1}(\tau_{j}^{\infty}),y_{2j+3}(\tau_{j+1}^{\infty})\} \right].
\]
Consequently, in order to show that $Y(\tau)\rightarrow \lambda/\sqrt{2}$ as $\tau\rightarrow\infty$ it suffices to show that 
\begin{equation}\label{eq:big_Y_limit}
\lim_{j\rightarrow\infty} y_{2j+1}(\tau_j^{\infty}) = \frac{\lambda}{\sqrt{2}}.
\end{equation}
Given that $y_{2j+1}$ is decreasing for all $\tau<\tau_{j+1}$, we have that $y_{2j+1}(\tau_j)\geq y_{2j+1}(\tau_j^{\infty})\geq \lambda/\sqrt{2}$. As a result, it suffices to show that 
\begin{equation}
\lim_{j\rightarrow\infty} y_{2j+1}(\tau_j) = \frac{\lambda}{\sqrt{2}}.
\end{equation}
This follows directly from \eqref{eq:Y_inf_limit_1}.
\end{proof}

This completes a thorough description of the solutions to the implicit equation \eqref{eq:def_h} including their long-time behavior. Before using these results to give bounds Gaussian integrals, we need a last result that will allow us to exploit these bounds under small variations of the parameter $\lambda$.

\subsection{Perturbation in $\lambda$}

Our next result explores the way in which solutions $y(\tau)$ depend on the parameter $\lambda>0$.

\begin{lem}\label{thm:perturb_lambda} Let $j\in\N$, $\lambda_0>0$ and consider a solution $y_{2j-1} (\tau;\lambda_0)$ (resp. $y_{2j}(\tau;\lambda_0)$) to \eqref{eq:def_F}. Fix $\tau \in (\widetilde{\tau}_{j-1}(\lambda_0),\tau_j^{\infty}(\lambda_0))$ (resp. $\tau \in (\widetilde{\tau}_{j}(\lambda_0),\tau_j^{\infty}(\lambda_0))$), then there exists a constant $d (\tau,\lambda_0)>0$ such that, for all 
\[
|\lambda-\lambda_0|\leq d (\tau,\lambda_0),
\]
the function $y_{2j-1} (\tau;\lambda)$ (resp. $y_{2j}(\tau;\lambda_0)$) is increasing (resp. decreasing) in $\lambda$ and
\begin{equation}\label{eq:perturb_lambda}
\begin{split}
 |y_{2j-1} (\tau;\lambda)-y_{2j-1}(\tau;\lambda_0)| & \lesssim_{\tau,\lambda_0} |\lambda-\lambda_0|,\\
  |y_{2j} (\tau;\lambda)-y_{2j}(\tau;\lambda_0)| & \lesssim_{\tau,\lambda_0} |\lambda-\lambda_0|.
 \end{split}
\end{equation}
\end{lem}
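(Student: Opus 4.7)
The plan is to apply the Implicit Function Theorem to
\[
F(\tau, y; \lambda) := y\, h(2\tau y^2) - \lambda,
\]
now viewed as a function of three variables with $\lambda$ as a parameter. The hypotheses hold at every interior point of the existence interval of either branch. Along $y_{2j-1}(\cdot;\lambda_0)$ on $(\widetilde{\tau}_{j-1}(\lambda_0),\tau_j^{\infty}(\lambda_0))$ (resp.\ $y_{2j}(\cdot;\lambda_0)$ on $(\widetilde{\tau}_j(\lambda_0),\tau_j^{\infty}(\lambda_0))$), the monotonicity statements in \Cref{thm:bifurcations}, \emph{(iv)}--\emph{(vi)} together with the bound \eqref{eq:first_bound} place $2\tau y^2$ in an open subinterval of $(\tfrac{\pi(j-1)}{2},\tfrac{\pi j}{2})$, so $h$ is $C^\infty$ on that range and $F$ is $C^1$ in $(\tau,y)$. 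The partial derivative $\partial_y F(\tau,y) = h(2\tau y^2)+4\tau y^2 h'(2\tau y^2)$ is precisely the denominator in the formula \eqref{eq:y_prime} for $y'(\tau)$, and by \Cref{thm:bifurcations}, \emph{(vi)} and \emph{(viii)}, it vanishes on the branch only at the collision endpoint $\tau_j^{\infty}(\lambda_0)$, i.e.\ not in the interior.

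The IFT therefore produces a unique $C^1$ map $(\tau,\lambda)\mapsto y(\tau;\lambda)$ on a neighbourhood of $(\tau,\lambda_0)$ solving $F=0$ with the prescribed value at $\lambda=\lambda_0$. Because $\widetilde{\tau}_j(\lambda)=\pi j/(4\lambda^2)$ and the collision times $\tau_j^{\infty}(\lambda)$ depend continuously on $\lambda$, for $|\lambda-\lambda_0|$ small the fixed $\tau$ still lies inside the open existence interval of the $(2j-1)$-th (resp.\ $(2j)$-th) branch for the shifted parameter, and local uniqueness identifies the IFT solution with $y_{2j-1}(\tau;\lambda)$ (resp.\ $y_{2j}(\tau;\lambda)$).

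Differentiating $F(\tau,y(\tau;\lambda);\lambda)=0$ in $\lambda$ and using $\partial_\lambda F \equiv -1$ yields
\[
\partial_\lambda y(\tau;\lambda) \;=\; \frac{1}{\partial_y F(\tau, y(\tau;\lambda))},
\]
so the sign of $\partial_\lambda y$ matches that of $\partial_y F$ along the branch. This is the crux of the argument. For $y_{2j-1}=y_{2(j-1)+1}$, the birth-time computation in the proof of \Cref{thm:bifurcations}, \emph{(ii)} (the case $h'\to +1$) gives $\partial_y F \to 1+\pi(j-1)>0$ as $\tau\to \widetilde{\tau}_{j-1}^{+}$; since $\partial_y F$ is continuous on the interior of the existence interval and cannot vanish there without the branch collapsing, $\partial_y F>0$ throughout, and hence $y_{2j-1}$ is strictly increasing in $\lambda$. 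For $y_{2j}$ the analogous computation with $h'\to -1$ at $\widetilde{\tau}_j^{+}$ produces $\partial_y F\to 1-\pi j<0$, so $\partial_y F<0$ throughout and $y_{2j}$ is strictly decreasing in $\lambda$.

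The Lipschitz bound \eqref{eq:perturb_lambda} then follows by choosing $d(\tau,\lambda_0)>0$ small enough that $|\partial_y F|$ admits a positive lower bound on a compact product neighbourhood of $(\tau,y(\tau;\lambda_0),\lambda_0)$; integrating the pointwise bound on $|\partial_\lambda y|=1/|\partial_y F|$ in $\lambda$ gives the estimate with a constant depending only on $\tau$ and $\lambda_0$. The principal obstacle is the sign-tracking of $\partial_y F$ on each branch, which is handled by combining the birth-time limit from \Cref{thm:bifurcations}, \emph{(ii)}, with the observation that any interior zero of $\partial_y F$ would, by \Cref{thm:bifurcations}, \emph{(viii)}, mark the end of the branch rather than an interior point.
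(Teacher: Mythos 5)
Your proposal is correct and follows essentially the same route as the paper: apply the implicit function theorem to $F(\tau,y;\lambda)=y\,h(2\tau y^2)-\lambda$ at fixed $\tau$, identify the local solution with the branch by uniqueness, use $\partial_\lambda y = 1/\partial_y F$ together with the sign of the denominator (positive along $y_{2j-1}$, negative along $y_{2j}$, as tracked from the birth derivatives in \Cref{thm:bifurcations}) for the monotonicity, and obtain \eqref{eq:perturb_lambda} from a local lower bound on $|\partial_y F|$. Your reconstruction of the sign argument via continuity and the impossibility of an interior zero of $\partial_y F$ is exactly the reasoning the paper delegates to the proof of \Cref{thm:bifurcations}, \emph{(iv)}--\emph{(v)}.
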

\begin{proof}  Consider the implicit equation \eqref{eq:def_F} with $\tau$ fixed and $\lambda$ as a variable:
\[
F(\tau; y,\lambda)= y\, h(2\tau y^2)-\lambda = 0,
\]
with $h$ defined in \eqref{eq:def_h}. Using the notation $\xi_{2j-1} (\lambda):=2\tau y_{2j-1}(\tau,\lambda)^2$, we note that
\[
\frac{\pa F }{\pa y} (\tau; y_{2j-1}(\tau,\lambda), \lambda) = h(\xi(\lambda)) + 2\xi(\lambda) h'(\xi (\lambda))
\]
The fact that $\tau \in (\widetilde{\tau}_{j-1}(\lambda_0),\tau_j^{\infty}(\lambda_0))$ implies that $\frac{\pa F }{\pa y}(\tau; y_{2j-1}(\tau,\lambda_0), \lambda_0) \neq 0$. The implicit function theorem then implies the existence of a small neighborhood of $\lambda_0$ with radius $d(\tau,\lambda_0)$ where $y$ can be solved as an implicit function of $\lambda$. By the local uniqueness, this solution $y(\tau,\lambda)$ must be exactly $y_{2j-1}(\tau,\lambda)$. Moreover, we have that $y_{2j-1}$ is locally differentiable in $\lambda$ and
\begin{equation}\label{eq:der_y_lambda}
\frac{\pa}{\pa\lambda} y_{2j-1} (\tau,\lambda) = \left( \frac{\pa F }{\pa y} (\tau, y_{2j-1} (\tau,\lambda), \lambda)\right)^{-1} = \frac{1}{h(\xi_{2j-1} (\lambda)) + 2\xi_{2j-1} (\lambda) h'(\xi_{2j-1} (\lambda))}.
\end{equation}
Note that the right-hand side of \eqref{eq:der_y_lambda} is positive as long as $y_{2j-1}$ exists (and in particular at $(\tau,\lambda_0)$) as explained in the proof of (v) in \Cref{thm:bifurcations}. This shows that $y_{2j-1}$ is locally increasing in $\lambda$. A similar argument shows that $y_{2j}$ is locally decreasing.

Finally, \eqref{eq:perturb_lambda} follows by making $d(\tau,\lambda_0)$ small enough so that 
\[
\frac{1}{2}\Big | \frac{\pa F }{\pa y} (\tau, y_{2j-1} (\tau,\lambda_0), \lambda_0) \Big |\leq  \Big | \frac{\pa F }{\pa y} (\tau, y_{2j-1} (\tau,\lambda), \lambda) \Big | \leq 2 \Big | \frac{\pa F }{\pa y} (\tau, y_{2j-1} (\tau,\lambda_0), \lambda_0)\Big | 
\]
for all $|\lambda-\lambda_0|\leq d(\tau,\lambda_0)$. The same argument works in the case of the solution $y_{2j}$.
\end{proof}

\begin{lem}\label{thm:perturb_lambda_2}  For $j\in\N$, the collision time $\tau_j^{\infty}(\lambda)$ is a continuous function of $\lambda>0$.
\end{lem}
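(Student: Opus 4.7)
The plan is to exploit the global parametrization of the curve $\{F=0\}$ introduced in \Cref{thm:curve_tau_xi}, namely
\[
\tau(\xi)=\frac{\xi\,h(\xi)^2}{2\lambda^2},\qquad \xi\in(0,\infty).
\]
As already established inside the proof of \Cref{thm:bifurcations} (parts (vii)-(viii)), each collision point of two solution branches $y_{2j-1}$ and $y_{2j}$ corresponds to a vertical tangent of the curve $\{F=0\}$ in $(\tau,y)$-coordinates, which in $(\tau,\xi)$-coordinates is precisely the condition $\tau'(\xi)=0$. The key observation driving the whole argument is that this critical-point equation will turn out to be independent of $\lambda$.

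Indeed, a direct computation gives
\[
\tau'(\xi)=\frac{h(\xi)\bigl(h(\xi)+2\xi h'(\xi)\bigr)}{2\lambda^2},
\]
and since $h\geq 1>0$, the equation $\tau'(\xi)=0$ reduces to $G(\xi):=h(\xi)+2\xi h'(\xi)=0$, which does not depend on $\lambda$. The bounds $\xi_{2j-1}(\tau_j^{\infty})\in(\pi(j-\tfrac12)/2,\pi j/2)$ derived during the proof of \Cref{thm:jump_points} locate the $j$-th collision at a zero $\xi_j^{*}$ of $G$ inside the interval $(\pi(2j-1)/4,\pi j/2)$; uniqueness of this zero follows from the elementary observation that $G>0$ at the midpoint $\pi(2j-1)/4$ (where $h$ attains its local maximum and $h'=0$), $G<0$ near $\pi j/2^{-}$ (since $h\to 1$ and $h'\to -1$ there), and finally $G'(\xi)=3h'(\xi)-2\xi h(\xi)$ evaluated at any zero of $G$ has the sign of $(3+4\xi^2)h'(\xi)<0$.

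Since $\xi_j^{*}$ is defined by a $\lambda$-independent equation, we conclude that
\[
\tau_j^{\infty}(\lambda)=\frac{\xi_j^{*}\,h(\xi_j^{*})^2}{2\lambda^2},
\]
which is a real-analytic, and in particular continuous, function of $\lambda\in(0,\infty)$. The only bookkeeping point to check is that for each fixed $j\in\mathbb{N}$ the labelling of collision points in \Cref{thm:bifurcations} consistently picks out the same $\xi_j^{*}$ for all $\lambda>0$; this is guaranteed by the ordered induction in parts (iv)-(vi) of that proposition, together with the local stability of solution branches under small $\lambda$-variations provided by \Cref{thm:perturb_lambda}. No substantial analytic obstacle is expected, since the $(\tau,\xi)$-parametrization neatly decouples the geometry of the curve from its dependence on $\lambda$ (which enters only through the prefactor $1/(2\lambda^2)$).
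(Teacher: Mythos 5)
Your proposal is correct and follows essentially the same route as the paper: both identify the collision via the $\lambda$-independent equation $h(\xi)+2\xi h'(\xi)=0$ and read off $\tau_j^{\infty}(\lambda)=\xi_j^{\infty}\,h(\xi_j^{\infty})^2/(2\lambda^2)$ from the parametrization of \Cref{thm:curve_tau_xi}, which is manifestly continuous in $\lambda$. The extra details you supply (uniqueness of the zero in each period, via the sign of $(3+4\xi^2)h'(\xi)$ at a zero, and consistency of the labelling) are correct refinements of points the paper leaves implicit from \Cref{thm:bifurcations} and \eqref{eq:first_bound}.
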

\begin{proof}
By \Cref{thm:curve_tau_xi},  $\tau_j^{\infty}(\lambda)= \frac{\xi_j^{\infty} \, h(\xi_j^{\infty})^2}{2\,\lambda^2}$ where 
$\xi_j^{\infty}$ is the $j$-th positive solution to the equation:
\[
h(\xi) + 2\xi h'(\xi) =0.
\]
In particular $\xi_j^{\infty}$ is independent of $\lambda$ and thus $\tau_j^{\infty}(\lambda)$ is a continuous function of $\lambda$. In fact, recall that 
\[
\xi_j^{\infty} = \frac{\pi}{2} \left( j -\frac{1}{2}\right) + \O(\frac{1}{j}),
\]
as we proved in \eqref{eq:third_bound}.
\end{proof}

\section{Asymptotic expansion of Gaussian integrals}\label{sec:Laplace}

Our goal in this section is to obtain a sharp asymptotic development for Gaussian integrals. More precisely, based on \eqref{eq:sup}, we have that:
\begin{equation}\label{eq:Gaussian_integral_ball}
\P \left( \{\norm{u_{\varepsilon} (t)}_{L^{\infty}_x} > z_0 \, \varepsilon^{1-\delta}\}\cap \{ |\alpha|+|\beta| \leq 2c\varepsilon^{-\delta}\}\right) = \Ic (\varepsilon),
\end{equation}
where $u$ is the solution to \eqref{eq:beating} and
\begin{equation}\label{eq:Gaussian_integral}
\Ic (\varepsilon)= \int_{\tilde{\Ac}(t,\lambda_{\varepsilon})}  \frac{4a\,b}{\sigma_{\alpha}^2\, \sigma_{\beta}^2}  \exp \left( - \frac{a^2}{\sigma_{\alpha}^2}  - \frac{b^2}{\sigma_{\beta}^2} \right) \, da \, db,
\end{equation}
for
\begin{multline}
\tilde{\Ac}(t,\lambda) = \left\lbrace (a,b)\in \R_{+}^2 \ \Big | \ a+b\leq 2c\varepsilon^{-\delta},\  \sqrt{a^2 \cos^2 (2\varepsilon^2 t \,[a^2 + b^2]) + b^2  \sin^2 (2\varepsilon^2  t \,[a^2 +b^2]) } \right. \\
\left. + \sqrt{b^2 \cos^2 (2\varepsilon^2 t \,[a^2 + b^2]) + a^2 \sin^2 (2\varepsilon^2  t \,[a^2 + b^2]) } \geq \lambda \right\rbrace ,
\end{multline}
and $\lambda_{\varepsilon}= z_0 \, \varepsilon^{-\delta} - \O(\varepsilon^{\frac{1}{2} - \frac{3}{2}\delta})$ for fixed $z_0>0$, $\delta\in (0,1)$.

The constant $c=c(z_0)>0$ in \eqref{eq:Gaussian_integral_ball} is chosen large enough so that the complement is negligible, i.e.
\[
\P \left( \{\norm{u_{\varepsilon} (t)}_{L^{\infty}_x} > z_0 \, \varepsilon^{1-\delta}\}\cap \{ |\alpha|+|\beta| \geq 2c\varepsilon^{-\delta}\} \right) \leq \exp\left( -\frac{4c^2\varepsilon^{-2\delta}}{\sigma_{\alpha}^2 + \sigma_{\beta}^2}\right)
\]
by \eqref{eq:LDP_L1}, which can be made as small as desired by increasing $c>0$, as we previously did in \eqref{eq:fix_c}.

A simple rescaling  $(a,b) \mapsto \varepsilon^{-\delta} (a,b)$ yields
\begin{equation}
\Ic(\varepsilon)= \int_{\Ac (\tau,\lambda)}  \frac{4\varepsilon^{-4\delta}\, a\, b}{\sigma_{\alpha}^2\, \sigma_{\beta}^2}  \exp \left( - \varepsilon^{-2\delta}\, \frac{a^2}{\sigma_{\alpha}^2}  - \varepsilon^{-2\delta}\, \frac{b^2}{\sigma_{\beta}^2} \right) \, da \, db.
\end{equation}
with $\lambda= z_0 -  \O(\varepsilon^{\frac{1}{2} (1-\delta)})$, $\tau=\varepsilon^{2-2\delta} t$, and
\begin{multline}\label{eq:A_set}
\Ac(\tau,\lambda) = \left\lbrace (a,b)\in \R_{+}^2 \ \Big | \  a+b\leq 2c,\ \sqrt{a^2 \cos^2 (2\tau \,[a^2 + b^2]) + b^2  \sin^2 (2 \tau \,[a^2 +b^2]) } \right. \\
\left. + \sqrt{b^2 \cos^2 (2 \tau \,[a^2 + b^2]) + a^2 \sin^2 (2 \tau \,[a^2 + b^2]) } \geq \lambda \right\rbrace .
\end{multline}

The main difficulty in giving sharp upper and lower bounds on such integrals is the time-dependent domain, which is generally non-convex, and is thus not amenable to a classical Laplace-type approximation.

In the case of $\sigma_{\alpha}=\sigma_{\beta}$ such sharp asymptotic developments can be obtained by indirect means, as shown in \Cref{thm:main_equal_var}. When $\sigma_{\alpha}>\sigma_{\beta}$, one may exploit a linear approximation to give sharp upper and lower bounds for \eqref{eq:Gaussian_integral} provided $t\ll \varepsilon^{-2(1-\delta)}$, as shown in \Cref{thm:LDP_diff_linear}. 
Our next goal is to derive such an asymptotic expansion in the more complicated case where $t\gtrsim \varepsilon^{-2(1-\delta)}$.

In order to do so, we first show that the set $\Ac$ \eqref{eq:A_set} contains a set which entirely depends on the curve $F$ introduced in \eqref{eq:def_F}, namely
 \begin{equation}\label{eq:F_again}
 F(\lambda; \tau, y) = y h(2\tau y^2) - \lambda=0.
 \end{equation}
 
More precisely, we have the following inclusion:

\begin{claim}\label{thm:inclusion} Fix $\lambda>0$, $c(z_0)>0$ sufficiently large, and define
\[
\Bc (\tau,\tilde{\lambda}):= \{ a\in [0,c] \mid F(\tilde{\lambda} ; \tau, a)  \geq 0\} \times \{ b\in  [0,\varepsilon]\}.
\]
Then there exists a constant $C_1>0$ and some $\varepsilon_0(\lambda)$ such that for all $\varepsilon<\varepsilon_0$ the following holds. Let $\tilde{\lambda}= \lambda - C_1 \sqrt{\tau} \varepsilon - \varepsilon^{1/2}$, then
\[
\Bc (\tau,\tilde{\lambda}) \subset  \Ac (\tau,\lambda).
\]
\end{claim}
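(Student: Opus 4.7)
The plan is to verify the inclusion pointwise: I would fix an arbitrary $(a,b)\in\Bc(\tau,\tilde\lambda)$ and check that it satisfies both conditions defining $\Ac(\tau,\lambda)$. By hypothesis, $a\in[0,c]$ satisfies $a\,h(2\tau a^2)\geq\tilde\lambda$ and $b\in[0,\varepsilon]$. The mass constraint $a+b\leq 2c$ is immediate from $a\leq c$ and $b\leq\varepsilon<c$ once $\varepsilon_0\leq c$. The substantive step is the amplitude bound
\[
S(a,b):=\sqrt{a^2\cos^2\theta+b^2\sin^2\theta}+\sqrt{b^2\cos^2\theta+a^2\sin^2\theta}\geq\lambda,\qquad\theta:=2\tau(a^2+b^2).
\]

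The core of the argument is to compare $S(a,b)$ with the one-dimensional proxy $a\,h(\tilde\theta)$, where $\tilde\theta:=2\tau a^2$; by definition of $\Bc$ the proxy satisfies $a\,h(\tilde\theta)\geq\tilde\lambda$. Two ingredients drive the comparison. First, dropping the $b^2$-contributions inside each radical yields the pointwise minorization $S(a,b)\geq a|\cos\theta|+a|\sin\theta|=a\,h(\theta)$, which is sharp in the sense of being attained at $b=0$. Second, the Lipschitz property of $h$ (with constant $\sqrt{2}$) combined with $|\theta-\tilde\theta|=2\tau b^2\leq 2\tau\varepsilon^2$ gives $|h(\theta)-h(\tilde\theta)|\leq 2\sqrt{2}\,\tau\varepsilon^2$. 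Packaging these and using $a\leq c$ leads to the pointwise bound
\[
S(a,b)\geq a\,h(\tilde\theta)-2\sqrt{2}\,c\,\tau\varepsilon^2.
\]

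The main obstacle is to re-express the $\tau\varepsilon^2$-error in the looser form $C_1\sqrt\tau\varepsilon+\varepsilon^{1/2}$ dictated by the prescribed $\tilde\lambda$, and to absorb the secondary approximation errors propagated from upstream---in particular the $O(\varepsilon^{(1-\delta)/2})$ slip between $\lambda_\varepsilon$ and $z_0$ introduced by the sup-norm approximation preceding the rescaling that produced $\Ac$. In the admissible timescale regime $t\lesssim\varepsilon^{-\frac{5}{2}(1-\delta)+\kappa}$ prescribed by Section~\ref{sec:normal_form}, one has $\sqrt\tau\,\varepsilon\ll 1$, so $\tau\varepsilon^2\leq\sqrt\tau\,\varepsilon$ and the Lipschitz error is dominated by $C_1\sqrt\tau\,\varepsilon$ for a suitable $C_1=C_1(c)$; the additional $\varepsilon^{1/2}$ term in the definition of $\tilde\lambda$ absorbs the residual error inherited from $\lambda_\varepsilon$. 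Chaining these inequalities with the defining inequality $a\,h(\tilde\theta)\geq\tilde\lambda$ and the relation $\tilde\lambda=\lambda-C_1\sqrt\tau\varepsilon-\varepsilon^{1/2}$ yields $S(a,b)\geq\lambda$, provided the signs in the chain are balanced in the intended direction.

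The delicate accounting lies in matching constants and signs precisely. If the naive Lipschitz bound turns out to be too lossy---for instance, when $\tilde\theta$ lies close to a non-differentiability point $\pi j/2$ of $h$, where the one-sided derivatives of $h$ differ---one may refine the estimate by exploiting the relation $h''=-h$ away from those points, or by splitting into cases according to the distance $\mathrm{dist}(\tilde\theta,\pi\Z/2)$, in the spirit of the analysis carried out in \Cref{thm:mu}. This refinement should be enough to absorb any tightness encountered at $b$ near $\varepsilon$ while preserving the prescribed error scales.
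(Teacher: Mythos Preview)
Your core argument is correct and in fact cleaner than the paper's. The paper proceeds by a case split on whether $a^2\cos^2(2\tau a^2)$ (respectively $a^2\sin^2(2\tau a^2)$) exceeds $\varepsilon$; in the ``small'' case it uses the crude bound $a|\cos(2\tau a^2)|\leq\sqrt{\varepsilon}$, and this is precisely what generates the $\varepsilon^{1/2}$ term in the statement. Your approach sidesteps this entirely: the minorization $S(a,b)\geq a\,h(\theta)$ holds uniformly (no case analysis needed), and the Lipschitz step (the constant is in fact $1$, not $\sqrt{2}$) gives
\[
S(a,b)\geq a\,h(\tilde\theta)-2c\tau\varepsilon^2\geq\tilde\lambda-2c\tau\varepsilon^2,
\]
an error strictly smaller than the paper's $C_1\sqrt{\tau}\varepsilon+\sqrt{\varepsilon}$ in the admissible regime $\sqrt{\tau}\varepsilon\ll 1$. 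Your concern about non-differentiability points of $h$ is therefore unfounded: the global Lipschitz bound is all that is required, and the refinements sketched in your last paragraph are unnecessary.

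Two points of confusion remain. First, your chain yields $S(a,b)\geq\tilde\lambda-2c\tau\varepsilon^2$, which lies \emph{below} $\tilde\lambda$; to conclude $S(a,b)\geq\lambda$ one needs $\tilde\lambda\geq\lambda+2c\tau\varepsilon^2$, i.e.\ $\tilde\lambda=\lambda+C_1\sqrt{\tau}\varepsilon+\varepsilon^{1/2}$ with the opposite sign to that stated. (The paper's own proof ends with $\lambda=\tilde\lambda-C_1\sqrt{\tau}\varepsilon-\sqrt{\varepsilon}$, consistent with this; the discrepancy with the claim as written is a sign typo, immaterial for the downstream LDP since only $\tilde\lambda\to z_0$ matters.) Your hedge ``provided the signs in the chain are balanced'' is thus an acknowledgment of a real issue: with the stated sign the inclusion is false, with the corrected sign your argument closes. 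Second, your rationale for the $\varepsilon^{1/2}$ term---that it absorbs the slip between $\lambda_\varepsilon$ and $z_0$---is wrong: that slip is of order $\varepsilon^{(1-\delta)/2}$ and is handled separately (it is the $C_2\varepsilon^{(1-\delta)/2}$ appearing in the applications of the claim). In your approach the $\varepsilon^{1/2}$ is simply superfluous.
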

\begin{rk} Note that the choice to limit $b$ to $[0,\varepsilon]$ is somewhat arbitrary: one could choose any interval of the form $[0,\varepsilon^n]$, $n\in\N$. Such flexibility is important in order to potentially extend the range of $\tau$ to longer timescales. In our case, the range of $\tau$ is limited by the remainder terms in the Hamiltonian, cf.\ \Cref{thm:normal_form}. However, our argument generalizes to other Hamiltonians whose remainder terms are better behaved, see also \Cref{rk:longer_gamma}.
\end{rk}
\begin{proof} 
Suppose that $a\in (0,c)$ and $b\in (0,\varepsilon)$. Assuming $a^2 \cos^2 (2\tau \,a^2 )>\varepsilon$ we have:
\[
\begin{split}
\sqrt{a^2 \cos^2 (2\tau \,[a^2 +b^2]) +b^2  \sin^2 (2\tau \,[a^2 + b^2]) } & \geq \sqrt{a^2 \cos^2 (2\tau \,[a^2 + b^2])} \\
&\geq  \sqrt{a^2 [ \cos^2 (2\tau \,a^2 ) - \O( \varepsilon^2 \tau)]}\\
&\geq  \sqrt{a^2  \cos^2 (2\tau \,a^2 )}  - \O( \sqrt{\tau} \, \varepsilon )
\end{split}
\]
A similar argument for the other root yields
\begin{equation}\label{eq:B_first}
\Bc (\tau,\tilde{\lambda}) \cap \{a^2 \min\{ \sin^2 (2\tau \,a^2 ), \cos^2 (2\tau \,a^2 )\} >\varepsilon\}  \subset \Ac \left(\tau, \tilde{\lambda}- \O( \sqrt{\tau} \, \varepsilon )\right).
\end{equation}

Next suppose that $a^2 \cos^2 (2\tau \,a^2 )\leq \varepsilon$ (the other case is analogous). Then $ F(\tilde{\lambda}; \tau, a) \geq 0$ implies that 
\begin{equation}\label{eq:cos_small}
\sqrt{a^2 \sin^2 (2\tau \,a^2 )} \geq \tilde{\lambda} - \sqrt{\varepsilon}.
\end{equation}

Moreover, using \Cref{thm:existence}, $ F(\tilde{\lambda}; \tau, a) \geq 0$ implies $a\geq \tilde{\lambda}/\sqrt{2}$. In particular, for all $\varepsilon<\varepsilon_0 := \tilde{\lambda}^2/4$ we must have that 
\[
a^2 \cos^2 (2\tau \,a^2 ) + a^2 \sin^2 (2\tau \,a^2 )>2\varepsilon,
\]
implying that $a^2 \sin^2 (2\tau \,a^2 )>\varepsilon$. 

Using \eqref{eq:cos_small}, we have that
\[
\begin{split}
\sqrt{a^2 \cos^2 (2\tau \,[a^2 + b^2]) + b^2  \sin^2 (2 \tau \,[a^2 +b^2]) }  & + \sqrt{b^2 \cos^2 (2\tau \,[a^2 + b^2]) + a^2 \sin^2 (2 \tau \,[a^2 + b^2]) } \\
 & \geq  \sqrt{a^2 \sin^2 (2 \tau \,[a^2 + b^2])} \\
&\geq  \sqrt{a^2 \sin^2 (2 \tau a^2 )} - \O(\sqrt{\tau}  \varepsilon)\\
& \geq \tilde{\lambda} - \sqrt{\varepsilon}- \O(\sqrt{\tau}  \varepsilon).
\end{split} 
\]
In particular, we find that 
\begin{equation}\label{eq:B_second}
\Bc (\tau,\tilde{\lambda} ) \cap \{a^2 \min\{ \sin^2 (2\tau \,a^2 ), \cos^2 (2\tau \,a^2 )\} \leq \varepsilon\} \subset \Ac (\tau,  \tilde{\lambda}- \sqrt{\varepsilon}- \O(\varepsilon\, \sqrt{\tau})).
\end{equation}

Combining \eqref{eq:B_first} and \eqref{eq:B_second} with the fact that $\Ac (\tau, \lambda_1) \subset \Ac (\tau,\lambda_2)$ if $\lambda_1\geq \lambda_2$, we find that 
\[
\Bc (\tau,\tilde{\lambda}) \subset \Ac (\tau,  \lambda)
\] 
for $\lambda = \min\{ \tilde{\lambda}- \sqrt{\varepsilon}- \O( \varepsilon\sqrt{\tau}), \tilde{\lambda}+ \O( \varepsilon \sqrt{\tau})\}$. In particular, one may fix $C_1>0$ large enough and set
\[
\lambda = \tilde{\lambda} - C_1  \varepsilon \sqrt{\tau} - \sqrt{\varepsilon}.
\]
\end{proof}

Thanks to \Cref{thm:inclusion} and to the positivity of the integrand, we have the following lower bound:
\[
\Ic(\varepsilon)\geq  \int_{\Bc (\tau,\tilde{\lambda})}  \frac{4\varepsilon^{-4\delta}\, a\, b}{\sigma_{\alpha}^2\, \sigma_{\beta}^2}  \exp \left( - \varepsilon^{-2\delta}\, \frac{a^2}{\sigma_{\alpha}^2}  - \varepsilon^{-2\delta}\, \frac{b^2}{\sigma_{\beta}^2} \right) \, da \, db
\]
with $\tilde{\lambda}=z_0 - C_1 \sqrt{\tau} \varepsilon-C_2 \, \varepsilon^{\frac{1}{2} (1-\delta)}  - \varepsilon^{1/2}$. 

We are ready to exploit the results in \Cref{sec:implicit}.

\subsection{Resonant timescales}

In this subsection, we assume that $t= \tau \varepsilon^{-2(1-\delta)}$ for some $\tau\in (0,\infty)$.  Our goal is to prove the lower bound \eqref{eq:resonantLDP_diff_variance}.

\subsubsection{Point of continuity}

Let $Y(\lambda; \tau)$, as defined in \eqref{eq:big_Y}, be the minimal solution to the implicit equation \eqref{eq:def_F}. Let us assume that $\tau$ is a point of continuity of $Y(z_0,\tau)$. We then have the following result:

\begin{prop}\label{thm:LDP_resonant} Fix $z_0>0$, $\delta\in (0,1)$ and let $t= \tau \varepsilon^{-2(1-\delta)}$ for some $\tau\in (0,\infty)$. Suppose that $\tau$ is a point of continuity of $Y(z_0,\tau)$. Then there exists some $\varepsilon_0 (\tau, z_0)>0$ such that for all $\varepsilon\leq \varepsilon_0$ we have:
\[
\varepsilon^{2\delta} \log \P \left( \norm{u_{\varepsilon} (t)}_{L^{\infty}_x} > z_0 \, \varepsilon^{1-\delta}\right)\geq \frac{\Jc(z_0, \tau)^2}{\sigma_{\alpha}^2}
\]
with $\Jc$ defined in \eqref{eq:def_J}.
\end{prop}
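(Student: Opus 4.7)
The plan is to start from the approximation of the $L^{\infty}$-norm given by \eqref{eq:sup}, valid on the event $\{|\alpha|+|\beta|\leq c\,\varepsilon^{-\delta}\}$ with $c=c(z_0)>10z_0$ chosen as in \eqref{eq:fix_c}. Since this exceptional event has probability $\O(\exp(-Cc^2\varepsilon^{-2\delta}))$ by \Cref{thm:L1_L2_LDP}, for our choice of $c$ it contributes a term which is negligible compared with the bound we want to prove. On the complementary event, \eqref{eq:sup} implies the inclusion
\[
\{\|u_{\varepsilon}(t)\|_{L^{\infty}_x}>z_0\varepsilon^{1-\delta}\}\cap\{|\alpha|+|\beta|\leq c\varepsilon^{-\delta}\}\supseteq \tilde{\Ac}(t,\lambda_\varepsilon)\cap\{|\alpha|+|\beta|\leq c\varepsilon^{-\delta}\}
\]
where $\lambda_\varepsilon=z_0\varepsilon^{-\delta}-\O(\varepsilon^{(1-\delta)/2})$ absorbs the error term in \eqref{eq:sup}, so after the standard rescaling $(a,b)\mapsto\varepsilon^{-\delta}(a,b)$ we obtain the lower bound $\P(\cdots)\geq \Ic(\varepsilon)+(\text{negligible})$ with $\Ic$ defined in \eqref{eq:Gaussian_integral} and $\lambda=z_0-\O(\varepsilon^{(1-\delta)/2})$.

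Next I apply \Cref{thm:inclusion} to replace $\Ac(\tau,\lambda)$ with the simpler domain $\Bc(\tau,\tilde\lambda)$, where $\tilde\lambda=\lambda-C_1\sqrt{\tau}\,\varepsilon-\varepsilon^{1/2}\to z_0$ as $\varepsilon\to 0$. The key observation is the following sign analysis of $F(\tilde\lambda;\tau,\cdot)$: since $F(\tilde\lambda;\tau,0)=-\tilde\lambda<0$ and $Y(\tilde\lambda,\tau)$ is by \eqref{eq:big_Y} the smallest positive zero of $F(\tilde\lambda;\tau,\cdot)$, we have $F<0$ on $[0,Y(\tilde\lambda,\tau))$, and then $F\geq 0$ on the interval $[Y(\tilde\lambda,\tau),M(\tilde\lambda,\tau)]$, where $M(\tilde\lambda,\tau)$ denotes either the next positive zero of $F(\tilde\lambda;\tau,\cdot)$ or the cutoff $c$ if no such zero exists in $[0,c]$. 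Hence $[Y(\tilde\lambda,\tau),M(\tilde\lambda,\tau)]\times[0,\varepsilon]\subseteq\Bc(\tau,\tilde\lambda)$.

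Restricting $\Ic(\varepsilon)$ to this product set, Fubini factorizes the Gaussian integral, and each one-dimensional integral can be computed explicitly:
\[
\int_0^{\varepsilon}\frac{2\varepsilon^{-2\delta}b}{\sigma_\beta^2}e^{-\varepsilon^{-2\delta}b^2/\sigma_\beta^2}\,db=1-e^{-\varepsilon^{2-2\delta}/\sigma_\beta^2}\;\gtrsim\;\varepsilon^{2-2\delta},
\]
\[
\int_{Y}^{M}\frac{2\varepsilon^{-2\delta}a}{\sigma_\alpha^2}e^{-\varepsilon^{-2\delta}a^2/\sigma_\alpha^2}\,da=e^{-\varepsilon^{-2\delta}Y^2/\sigma_\alpha^2}\left(1-e^{-\varepsilon^{-2\delta}(M^2-Y^2)/\sigma_\alpha^2}\right),
\]
where $Y=Y(\tilde\lambda,\tau)$, $M=M(\tilde\lambda,\tau)$. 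The key step is to argue that the second factor tends to $1$. This is where the continuity hypothesis on $\tau$ is used: since $\tau$ is a point of continuity of $Y(z_0,\cdot)$, we have $\tau\neq\tau_j^\infty(z_0)$ for all $j$, so $M(z_0,\tau)-Y(z_0,\tau)>0$ strictly. By \Cref{thm:perturb_lambda} applied to both the minimal solution and the next consecutive one, and by \Cref{thm:perturb_lambda_2} ensuring that $\tau$ remains outside the collision set $\{\tau_j^\infty(\tilde\lambda)\}$ for $\tilde\lambda$ close to $z_0$, the distance $M(\tilde\lambda,\tau)-Y(\tilde\lambda,\tau)$ stays bounded below by a strictly positive constant $d(\tau,z_0)$ for all $\varepsilon$ small enough.

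Combining these estimates, taking logarithm, and multiplying by $\varepsilon^{2\delta}$ yields
\[
\varepsilon^{2\delta}\log\P\left(\|u_\varepsilon(t)\|_{L^\infty_x}>z_0\varepsilon^{1-\delta}\right)\geq -\frac{Y(\tilde\lambda,\tau)^2}{\sigma_\alpha^2}+\O\left(\varepsilon^{2\delta}\log\varepsilon\right).
\]
The polynomial error vanishes as $\varepsilon\to 0$, and a final application of \Cref{thm:perturb_lambda} (continuity of $Y$ in its first argument at $(z_0,\tau)$) combined with the fact that $\tilde\lambda\to z_0$ gives $Y(\tilde\lambda,\tau)\to Y(z_0,\tau)=\Jc(z_0,\tau)$, which is the desired lower bound. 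The main obstacle is the verification that $M-Y$ remains uniformly bounded below under the perturbation $\lambda\leadsto\tilde\lambda$; everything else is a routine Laplace-type estimate.
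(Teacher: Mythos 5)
Your proposal is correct and follows essentially the same route as the paper: lower-bound the probability by the Gaussian integral over $\Bc(\tau,\tilde\lambda)$ via \Cref{thm:inclusion}, restrict to the rectangle between the two smallest zeros of $F(\tilde\lambda;\tau,\cdot)$ (the paper's inclusion \eqref{eq:inclusion}), evaluate the factored one-dimensional Gaussian integrals, and use \Cref{thm:perturb_lambda} and \Cref{thm:perturb_lambda_2} to keep the gap between consecutive zeros bounded below and to pass from $Y(\tilde\lambda,\tau)$ to $Y(z_0,\tau)=\Jc(z_0,\tau)$. The only differences are cosmetic (your "next zero or cutoff $c$" in place of the paper's convention $y_{2j}\equiv\tilde\lambda$ when only one solution exists, and an explicit Fubini factorization instead of the nested integration in \eqref{eq:bound_Ierror}).
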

\begin{proof}
By \eqref{eq:big_Y}, there exists some $j\in\N$ such that $Y(z_0;\tau)=y_{2j-1}(z_0;\tau)$.  Since $\tau$ is a point of continuity of $y_{2j-1}(z_0;\tau)$, we have that $\tau \in (\tau_{j-1}^{\infty}(z_0), \tau_j^{\infty}(z_0))$. In particular, 
 $\tau \in (\tau_{j-1}^{\infty}(\tilde{\lambda}), \tau_j^{\infty}(\tilde{\lambda}))$ for $|\tilde{\lambda}-z_0|\ll 1$ small enough (in terms of $z_0$), given the continuity of $\tau_j^{\infty}(\tilde{\lambda})$ in the variable $\tilde{\lambda}$, cf.\ \Cref{thm:perturb_lambda_2}.
 
Moreover, $\tau$ is a point of continuity of $y_{2j-1}(\tilde{\lambda};\tau)$ for 
\begin{equation}\label{eq:lambda_tilde}
\tilde{\lambda}=z_0 - C_1 \sqrt{\tau} \varepsilon - C_2 \varepsilon^{\frac{1}{2} (1-\delta)}-  \varepsilon^{1/2}
\end{equation}
as long as $\varepsilon< \varepsilon_0(\tau,z_0)$ is small enough. If $\tau\geq \tau_1(\tilde{\lambda})$, we consider $y_{2j}(\tilde{\lambda}; \tau)$, the second smallest solution to \eqref{eq:def_F}. If $\tau<\tau_1$, we set by convention $y_{2j}(\tilde{\lambda}; \tau)\equiv \tilde{\lambda}$ instead. We then have that the following inclusion:
\begin{equation}\label{eq:inclusion}
[y_{2j-1}(\tilde{\lambda}; \tau), y_{2j}(\tilde{\lambda}; \tau)] \times [0,\varepsilon]_b \subset \Bc (\tau,\tilde{\lambda}).
\end{equation}

Setting $a_0=y_{2j-1}(\tilde{\lambda}; \tau)$, we have that 
\begin{equation}\label{eq:Gaussian_integral_toporder}
\mathcal{I}(\varepsilon) \geq \exp \left( - \varepsilon^{-2\delta}\, \frac{a_0^2}{\sigma_{\alpha}^2}\right)\, \mathcal{I}_{\mathrm{error}} (\varepsilon)
\end{equation}
where 
\begin{equation}\label{eq:Gaussian_integral_error}
\begin{split}
 \mathcal{I}_{\mathrm{error}} (\varepsilon) =&\  \int_{y_{2j-1}(\tilde{\lambda}; \tau)}^{y_{2j}(\tilde{\lambda}; \tau)} \int_{0}^{\varepsilon}  \frac{4\varepsilon^{-4\delta}\, a\,b}{\sigma_{\alpha}^2\, \sigma_{\beta}^2}  \exp \left( - \varepsilon^{-2\delta}\, \frac{a^2- a_0^2}{\sigma_{\alpha}^2}  - \varepsilon^{-2\delta}\, \frac{b^2}{\sigma_{\beta}^2} \right) \, db \, da
\end{split}
\end{equation}

Direct integration yields:
\begin{equation}\label{eq:bound_Ierror}
\begin{split}
 \mathcal{I}_{\mathrm{error}} (\varepsilon) & =  \int_{y_{2j-1}(\tilde{\lambda}; \tau)}^{y_{2j}(\tilde{\lambda}; \tau)} \frac{2\varepsilon^{-2\delta}\, a}{\sigma_{\alpha}^2}  \exp \left( - \varepsilon^{-2\delta}\, \frac{a^2- a_0^2}{\sigma_{\alpha}^2} \right) \, \left( 1- \exp (-\varepsilon^{2-2\delta}/\sigma_{\beta}^2) \right)\, da\\
 & \gtrsim \varepsilon^{2(1-\delta)}\,  \int_{y_{2j-1}(\tilde{\lambda}; \tau)}^{ y_{2j}(\tilde{\lambda}; \tau)} \frac{2\varepsilon^{-2\delta}\, a}{\sigma_{\alpha}^2}  \exp \left( - \varepsilon^{-2\delta}\, \frac{a^2- a_0^2}{\sigma_{\alpha}^2} \right) \, da\\
 & \gtrsim \varepsilon^{2(1-\delta)}\, \left[ 1- \exp \left( - \varepsilon^{-2\delta}\, \frac{ y_{2j}(\tilde{\lambda}; \tau)^2- y_{2j-1}(\tilde{\lambda}; \tau)^2}{\sigma_{\alpha}^2} \right) \right]
 \end{split}
\end{equation}

By \Cref{thm:perturb_lambda} and \eqref{eq:lambda_tilde}, we have that 
\[
\begin{split}
y_{2j}(\tilde{\lambda}; \tau)- y_{2j-1}(\tilde{\lambda}; \tau) & = y_{2j}(z_0; \tau)- y_{2j-1}(z_0; \tau) + \O_{z_0, \tau} ( |\tilde{\lambda}-z_0|) \\
& = y_{2j}(z_0; \tau)- y_{2j-1}(z_0; \tau) + \O_{z_0, \tau} ( \varepsilon^{(1-\delta)/2} )
\end{split}
\]
as long as $\varepsilon\leq \varepsilon_0$ is small enough so that $\varepsilon_0^{(1-\delta)/2}< d(\tau,z_0)$ in \Cref{thm:perturb_lambda}. Given that $\tau$ is a point of continuity of $Y(z_0,\tau)=y_{2j-1}(z_0; \tau)$, we have that 
the quantity 
\begin{equation}\label{eq:Delta_sol}
\Delta(z_0;\tau):= y_{2j}(z_0; \tau)- y_{2j-1}(z_0; \tau) >0.
\end{equation}
As a result,
\[
y_{2j}(\tilde{\lambda}; \tau)^2- y_{2j-1}(\tilde{\lambda}; \tau)^2 =[y_{2j}(\tilde{\lambda}; \tau)- y_{2j-1}(\tilde{\lambda}; \tau)]\cdot [y_{2j}(\tilde{\lambda}; \tau)+ y_{2j-1}(\tilde{\lambda}; \tau)]  \geq \frac{\Delta(z_0;\tau)}{2} \, z_0.
\]
This bound, together with \eqref{eq:bound_Ierror}, yields
\[
 \mathcal{I}_{\mathrm{error}} (\varepsilon)\gtrsim_{z_0,\tau}  \varepsilon^{2(1-\delta)}.
 \]
As a result,  \eqref{eq:Gaussian_integral_toporder} implies the bound
\[
\varepsilon^{2\delta} \log \Ic (\varepsilon) \geq -\frac{y_{2j-1}(\tilde{\lambda}; \tau)^2}{\sigma_{\alpha}^2} - o(1).
\]

Using \Cref{thm:perturb_lambda} and \eqref{eq:lambda_tilde}, we obtain the following equality for all $\varepsilon\leq \varepsilon_0 (\tau,z_0)$
\[
y_{2j-1}(\tilde{\lambda}; \tau) = y_{2j-1}(z_0; \tau) - \O_{z_0,\tau} \left( \varepsilon^{(1-\delta)/2}\right) .
\]
The fact that $\Jc (z_0, \tau)=y_{2j-1}(z_0; \tau)$ at all points of continuity finishes the proof.
\end{proof}

\subsubsection{Point of discontinuity}

Consider the minimal solution $Y(\lambda; \tau)$, as defined in \eqref{eq:big_Y}, to the implicit equation \eqref{eq:def_F}. Let us assume that $\tau$ is a jump point of $Y(z_0,\tau)$, i.e. $\tau = \tau_j^{\infty}$ for some $j\in\N$. In that case, we have the following:

\begin{prop}\label{thm:LDP_resonant2} Fix $z_0>0$, $\delta\in (0,1)$ and let $t= \tau \varepsilon^{-2(1-\delta)}$ for some $\tau\in (0,\infty)$. Suppose that $\tau$ is a jump point of $Y(z_0,\tau)$. Then there exists some $\varepsilon_0 (\tau, z_0)>0$ such that for all $\varepsilon\leq \varepsilon_0$ we have:
\[
\varepsilon^{2\delta} \log \P \left( \norm{u_{\varepsilon} (t)}_{L^{\infty}_x} > z_0 \, \varepsilon^{1-\delta}\right)\geq \frac{\Jc(z_0, \tau)^2}{\sigma_{\alpha}^2}
\]
with $\Jc$ defined in \eqref{eq:def_J}.
\end{prop}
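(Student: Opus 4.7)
My plan is to imitate the proof of \Cref{thm:LDP_resonant}, with the pair of solutions $(y_{2j-1},y_{2j})$ replaced by $(y_{2j+1},y_{2j+2})$, i.e.\ the third and fourth smallest zeroes of $F(\tilde{\lambda};\tau,\cdot)$ just before the collision. Since $\tau=\tau_j^{\infty}(z_0)$ is a jump point of $Y$, definition \eqref{eq:def_J} gives $\Jc(z_0,\tau)=y_{2j+1}(z_0;\tau)$, and \Cref{thm:bifurcations} implies $\widetilde{\tau}_j(z_0)<\tau_j(z_0)<\tau<\tau_{j+1}^{\infty}(z_0)$, so $\tau$ lies strictly inside the existence interval of $y_{2j+1}(z_0;\cdot)$. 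By \Cref{thm:perturb_lambda_2} and \Cref{thm:perturb_lambda}, $y_{2j+1}(\tilde{\lambda};\tau)$ is then well-defined for $\tilde{\lambda}$ sufficiently close to $z_0$ and tends to $\Jc(z_0,\tau)$ as $\tilde{\lambda}\to z_0$.

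The concrete plan is to set $\tilde{\lambda}=z_0-C_1\sqrt{\tau}\,\varepsilon-C_2\varepsilon^{(1-\delta)/2}-\varepsilon^{1/2}$, invoke \Cref{thm:inclusion} to bound $\mathcal{I}(\varepsilon)$ below by the Gaussian integral over $\Bc(\tau,\tilde{\lambda})$, and restrict the $a$-integration to $[y_{2j+1}(\tilde{\lambda};\tau),\,U]$. Consecutive zeroes of $F(\tilde{\lambda};\tau,\cdot)$ alternate between up- and down-crossings (the sign analysis inside the proof of \Cref{thm:bifurcations} shows that odd-indexed solutions are up-crossings), so the interval between $y_{2j+1}(\tilde{\lambda};\tau)$ and the next zero—or to $+\infty$ if no such zero exists—is contained in $\{F(\tilde{\lambda};\tau,\cdot)\geq 0\}$. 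When $y_{2j+2}(\tilde{\lambda};\tau)$ exists (which happens automatically for all sufficiently large $j$, since $\widetilde{\tau}_{j+1}(z_0)<\tau_j^{\infty}(z_0)$ is implied by the asymptotic \eqref{eq:def_tau}), I would take $U=y_{2j+2}(\tilde{\lambda};\tau)$, and another application of \Cref{thm:perturb_lambda} ensures the gap $U-y_{2j+1}(\tilde{\lambda};\tau)$ stays above $\tfrac{1}{2}\bigl[y_{2j+2}(z_0;\tau)-y_{2j+1}(z_0;\tau)\bigr]>0$ uniformly in $\varepsilon$. The Gaussian integration estimates \eqref{eq:Gaussian_integral_toporder}--\eqref{eq:bound_Ierror} then carry over verbatim, producing $\mathcal{I}_{\mathrm{error}}\gtrsim_{z_0,\tau}\varepsilon^{2(1-\delta)}$ and the conclusion
\[
\varepsilon^{2\delta}\log \mathcal{I}(\varepsilon)\;\geq\;-\frac{y_{2j+1}(\tilde{\lambda};\tau)^2}{\sigma_{\alpha}^2}\;-\;o(1)\;\xrightarrow[\varepsilon\to 0^{+}]{}\;-\frac{\Jc(z_0,\tau)^2}{\sigma_{\alpha}^2}.
\]

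The main obstacle I anticipate is the residual regime where $y_{2j+2}(\tilde{\lambda};\tau)$ is not yet born (i.e.\ $\widetilde{\tau}_{j+1}(\tilde{\lambda})>\tau$), which can occur for small $j$. In that case $F(\tilde{\lambda};\tau,\cdot)$ is strictly positive on all of $(y_{2j+1}(\tilde{\lambda};\tau),\,c]$, since no further zero interrupts it before the $b=0$ cut-off, so I would simply take $U$ to be any fixed constant in $(y_{2j+1}(z_0;\tau),\,c)$ and use continuity of $F$ in the $\tilde{\lambda}$ variable to preserve positivity of $F(\tilde{\lambda};\tau,\cdot)$ on $[y_{2j+1}(\tilde{\lambda};\tau),U]$ once $\varepsilon$ is small enough. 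The remainder of the argument is then identical, and the threshold $\varepsilon_0(\tau,z_0)$ depends on the separation $y_{2j+2}(z_0;\tau)-y_{2j+1}(z_0;\tau)$ in the generic case or on the chosen constant $U$ in the residual case, both of which are strictly positive.
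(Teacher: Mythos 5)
Your proposal is correct and takes essentially the same route as the paper, whose proof of this proposition is precisely "repeat the proof of \Cref{thm:LDP_resonant} with the inclusion $[y_{2j+1}(\tilde{\lambda};\tau),y_{2j+2}(\tilde{\lambda};\tau)]\times[0,\varepsilon]_b\subset \Bc(\tau,\tilde{\lambda})$ in place of \eqref{eq:inclusion}," together with the observation that $\Jc(z_0,\tau)=y_{2j+1}(z_0;\tau)$ at a jump point. Your additional treatment of the residual case where $y_{2j+2}$ is not yet born (which can genuinely occur at the first jump point, since $\tau_1^{\infty}<\widetilde{\tau}_2$) is a detail the paper's one-line proof passes over, and your fix --- integrating over $[y_{2j+1}(\tilde{\lambda};\tau),U]$ with a fixed $U$ on which $F(\tilde{\lambda};\tau,\cdot)\geq 0$, since $y_{2j+1}$ is then the largest zero and $F\geq 0$ up to $c$ --- is sound.
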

\begin{proof}
One may repeat the proof of \Cref{thm:LDP_resonant} but, instead of using \eqref{eq:inclusion}, we exploit the next two smallest solutions, i.e.
\[
[y_{2j+1}(\tilde{\lambda}; \tau), y_{2j+2}(\tilde{\lambda}; \tau)] \times [0,\varepsilon]_b \subset \Bc (\tau,\tilde{\lambda}).
\]
The same strategy of proof then leads to
\[
\varepsilon^{2\delta} \log \Ic (\varepsilon) \geq -\frac{y_{2j+1}(z_0; \tau)^2}{\sigma_{\alpha}^2} - o(1),
\]
and since $\tau$ is a jump point, \eqref{eq:def_J} implies that $\Jc (z_0, \tau)=y_{2j+1}(z_0; \tau)$, which concludes the proof.
\end{proof}

\subsection{Super-resonant timescales}\label{sec:super_res_LDP}

As we noticed in the case of the resonant times, a key tool to approximate our Gaussian integral is showing that the set $\{ F(\tau,y;z_0)\geq 0\}$ contains an interval of the form $[a_0,a_1)$, where $a_0,a_1$ are the smallest solutions to $F(\tau,y;z_0)=0$. In particular, we fundamentally use that the distance $a_1-a_0$ is strictly positive, cf.\ \eqref{eq:Delta_sol}.

As $\tau\rightarrow\infty$, however, the distance between consecutive solutions tends to zero. Consequently, the arguments above are insufficient since all intervals formed by two consecutive solutions $[a_0,a_1)$ collapse in the limit $\tau\rightarrow\infty$. In order to overcome this difficulty, we exploit a lower bound on the measure of the union of two such intervals. 

\begin{proof}[Proof of \Cref{thm:super_resonantLDP_diff_variance}]
{\bf Upper bound.} 
Let $\varepsilon$ be sufficiently small for \Cref{thm:approximation} to hold. Define the events
\[
\Ac (\lambda) = \{  \norm{u(t)}_{L^{\infty}_x (\T)} \geq  \lambda\},\quad \Bc_1 ( \lambda ) =\{ |\alpha| +|\beta| \geq  \lambda \},\quad  \Bc_2 (\lambda ) = \{ \sqrt{|\alpha|^2 + |\beta|^2} \geq  \lambda\}.
\]

By \eqref{eq:LDP_L1}, we have that 
\begin{equation}\label{eq:fix_c2}
\varepsilon^{2\delta}\, \log \P ( \Bc_1 (2c\varepsilon^{-\delta}) ) \leq -\frac{4c^2}{\sigma_{\alpha}^2+\sigma_{\beta}^2} < - 100 \, \frac{z_0^2}{\sigma_{\alpha}^2+\sigma_{\beta}^2}
\end{equation}
by fixing $c> 5 z_0$.

Note that 
\[
\P(\Ac ( z_0 \varepsilon^{1-\delta})) = \P(\Ac ( z_0 \varepsilon^{1-\delta}) \cap  \Bc_1 (2c\varepsilon^{-\delta})^c ) +  \P(\Ac ( z_0 \varepsilon^{1-\delta}) \cap  \Bc_1 (2c\varepsilon^{-\delta}) ) .
\]
By \eqref{eq:fix_c2}, the second summand is negligible since
\[
0 \leq \P(\Ac ( z_0 \varepsilon^{1-\delta}) \cap  \Bc_1 (2c\varepsilon^{-\delta}) ) \leq \P ( \Bc_1 (2c\varepsilon^{-\delta}) ) \leq \exp\left( - 100 \, \frac{z_0^2\varepsilon^{-2\delta}}{\sigma_{\alpha}^2+\sigma_{\beta}^2}\right).
\]
We highlight that our equation \eqref{eq:beating} is a.s.\ globally well-posed and thus the solution is well-defined in the set $\Bc_1 (2c\varepsilon^{-\delta})$. Finally, \eqref{eq:general_upper_bound} and \eqref{eq:LDP_L2} yield
\begin{equation}\label{eq:step_L2_2}
 \log \P(\Ac ( z_0 \varepsilon^{1-\delta}) \cap  \Bc_1 (2c\varepsilon^{-\delta})^c ) \leq \log \P ( \Bc_2 ( \frac{z_0 \varepsilon^{-\delta}}{\sqrt{2}} - \O ( \varepsilon^{\frac{1}{2}-\frac{3}{2}\delta}))) \leq -\frac{z_0^2}{2\sigma_{\alpha}^2}\, \varepsilon^{-2\delta} + o( \varepsilon^{-2\delta}).
\end{equation}

{\bf Lower bound.} Suppose that $t=t(\varepsilon)>0$ satisfies
\begin{equation}\label{eq:t_eps}
\lim_{\varepsilon\rightarrow 0^{+}} t(\varepsilon)\varepsilon^{2(1-\delta)}=\infty, \qquad t(\varepsilon)\lesssim \varepsilon^{-\frac{5}{2}(1-\delta)+\kappa} \ \mbox{for}\ \kappa>0.
\end{equation}
Let $(\varepsilon_n)_{n\in\N}$ be any sequence tending monotonically to zero, let $t_n=t(\varepsilon_n)$ and let $\tau_n =\varepsilon_n^{2(1-\delta)} t_n$. By \eqref{eq:t_eps}, $(\tau_n)_{n\in\N}$ is a sequence tending to $+\infty$. Then 
\[
\P \left( \norm{u_{\varepsilon_n} (t_n)}_{L^{\infty}_x} > z_0 \, \varepsilon_n^{-\delta}\right)= \int_{\mathcal{A}(\tau_n,\lambda_n)}  \frac{4\varepsilon_n^{-4\delta}\, a\, b}{\sigma_{\alpha}^2\, \sigma_{\beta}^2}  \exp \left( - \varepsilon_n^{-2\delta}\, \frac{a^2}{\sigma_{\alpha}^2}  - \varepsilon_n^{-2\delta}\, \frac{b^2}{\sigma_{\beta}^2} \right) \, da \, db:=\Ic (\varepsilon_n),
\]
with $\lambda_n= z_0 -  \O(\varepsilon_n^{(1-\delta)/2})$ and $\Ac$ defined in \eqref{eq:A_set}. By \Cref{thm:inclusion}, we have that
\[
\Ic (\varepsilon_n)\geq \int_{\Bc(\tau_n,\tilde{\lambda}_n)}  \frac{4\varepsilon_n^{-4\delta}\, a\, b}{\sigma_{\alpha}^2\, \sigma_{\beta}^2}  \exp \left( - \varepsilon_n^{-2\delta}\, \frac{a^2}{\sigma_{\alpha}^2}  - \varepsilon_n^{-2\delta}\, \frac{b^2}{\sigma_{\beta}^2} \right) \, da \, db
\]
where 
\[
\Bc (\tau,\lambda):= \{ a\in [0,c ] \mid F(\lambda; \tau, a)  \geq 0\} \times \{ b\in  [0,\varepsilon_n]\},
\]
and $\tilde{\lambda}_n=z_0  - C_1 \sqrt{\tau_n} \varepsilon_n -  C_2 \varepsilon_n^{(1-\delta)/2} -  \varepsilon_n^{1/2}$. Note that $\sqrt{\tau_n} \varepsilon_n = O(\varepsilon_n^{3/4})$ as $n\rightarrow\infty$ in view of the rightmost condition in \eqref{eq:t_eps}.

For $n$ sufficiently large, let $y_{2j-1}(\tau_n, \tilde{\lambda}_n), \ldots ,y_{2j+2}(\tau_n, \tilde{\lambda}_n)$ be the four smallest solutions (in order) to $F(\tilde{\lambda}_n; \tau_n, a)=0$, where $j=j(n)\rightarrow \infty$ as $n\rightarrow\infty$.
In particular, this implies that $\tau_n\in ( \tau_{j-1}^{\infty} (\tilde{\lambda}_n),  \tau_{j}^{\infty} (\tilde{\lambda}_n)]$. Note that
\[
[ y_{2j-1}(\tau_n, \tilde{\lambda}_n),y_{2j}(\tau_n, \tilde{\lambda}_n)] \cup [y_{2j+1}(\tau_n, \tilde{\lambda}_n),y_{2j+2}(\tau_n, \tilde{\lambda}_n)] \subset \{ a\in [0,c] \mid F(\tilde{\lambda}_n; \tau_n, a)  \geq 0\}.
\]
By \Cref{thm:asymp_increment}, for $n$ sufficiently large, at least one of these intervals has length bounded below by a multiple of $\tilde{\lambda}_n^{-3} \tau_n^{-2}$. Let this interval be $[y_{2j+1}(\tau_n, \tilde{\lambda}_n),y_{2j+2}(\tau_n, \tilde{\lambda}_n)]$, the argument being analogous in the other case.

As in the proof of \Cref{thm:LDP_resonant}, we let $a_0=y_{2j+1}(\tau_n, \tilde{\lambda}_n)$, and we set
\begin{equation}\label{eq:Gaussian_integral_toporder_2}
\Ic (\varepsilon_n) \geq \exp \left( - \varepsilon_n^{-2\delta}\, \frac{a_0^2}{\sigma_{\alpha}^2}\right)\, \Ic_{\mathrm{error}} (\varepsilon_n)
\end{equation}
where 
\begin{equation}\label{eq:Gaussian_integral_error_2}
\begin{split}
 \Ic_{\mathrm{error}} (\varepsilon_n) = &\  \int_{y_{2j+1}(\tau_n, \tilde{\lambda}_n)}^{y_{2j+2}(\tau_n, \tilde{\lambda}_n)} \int_{0}^{\varepsilon_n}  \frac{4\varepsilon_n^{-4\delta}\, a\,b}{\sigma_{\alpha}^2\, \sigma_{\beta}^2}  \exp \left( - \varepsilon_n^{-2\delta}\, \frac{a^2- a_0^2}{\sigma_{\alpha}^2}  - \varepsilon_n^{-2\delta}\, \frac{b^2}{\sigma_{\beta}^2} \right) \, db \, da
\end{split}
\end{equation}
Direct integration yields:
\[
\begin{split}
 \Ic_{\mathrm{error}} (\varepsilon_n) & =   \int_{y_{2j+1}(\tau_n, \tilde{\lambda}_n)}^{y_{2j+2}(\tau_n, \tilde{\lambda}_n)} \frac{2\varepsilon_n^{-2\delta}\, a}{\sigma_{\alpha}^2}  \exp \left( - \varepsilon_n^{-2\delta}\, \frac{a^2- a_0^2}{\sigma_{\alpha}^2} \right) \, \left( 1- \exp (-\varepsilon_n^{2-2\delta}/\sigma_{\beta}^2) \right)\, da\\
 & \gtrsim \varepsilon_n^{2(1-\delta)}\,    \int_{y_{2j+1}(\tau_n, \tilde{\lambda}_n)}^{y_{2j+2}(\tau_n, \tilde{\lambda}_n)} \frac{2\varepsilon_n^{-2\delta}\, a}{\sigma_{\alpha}^2}  \exp \left( - \varepsilon_n^{-2\delta}\, \frac{a^2- a_0^2}{\sigma_{\alpha}^2} \right) \, da\\
 & \gtrsim \varepsilon_n^{2(1-\delta)}\, \left[ 1- \exp \left( - \varepsilon_n^{-2\delta}\, \frac{ y_{2j+2}(\tau_n, \tilde{\lambda}_n)^2- y_{2j+1}(\tau_n, \tilde{\lambda}_n)^2}{\sigma_{\alpha}^2} \right) \right]
 \end{split}
\]
By \Cref{thm:asymp_increment} and \eqref{eq:t_eps},
\[
\begin{split}
y_{2j+2}(\tau_n, \tilde{\lambda}_n)^2- y_{2j+1}(\tau_n, \tilde{\lambda}_n)^2 & = [y_{2j+2}(\tau_n, \tilde{\lambda}_n)- y_{2j+1}(\tau_n, \tilde{\lambda}_n)]\, [y_{2j+2}(\tau_n, \tilde{\lambda}_n)+ y_{2j+1}(\tau_n, \tilde{\lambda}_n)]\\
& \geq \sqrt{2}\tilde{\lambda}_n\cdot \tilde{\lambda}_n^{-3} \tau_n^{-2} \gtrsim \frac{1}{z_0^2}\,\varepsilon_n^{1-\delta}.
\end{split}
\]
In particular,
\[
 \Ic_{\mathrm{error}} (\varepsilon_n)  \gtrsim \varepsilon_n^{3(1-\delta)} .
\]

By \eqref{eq:Gaussian_integral_toporder_2},
\[
\Ic (\varepsilon_n) \gtrsim \varepsilon_n^{3(1-\delta)} \, \exp \left( - \varepsilon_n^{-2\delta}\, \frac{y_{2j+1}(\tau_n, \tilde{\lambda}_n)^2}{\sigma_{\alpha}^2}\right).
\]
By \Cref{thm:Y_inf_limit}, for $n$ sufficiently large we have
\[
\begin{split}
\varepsilon_n^{2\delta} \log \P \left( \norm{u_{\varepsilon_n} (t_n)}_{L^{\infty}_x} > z_0 \, \varepsilon_n^{-\delta}\right) & \geq - \frac{y_{2j+1}(\tau_n, \tilde{\lambda}_n)^2}{\sigma_{\alpha}^2} + o(1)\\
& \geq -\frac{\tilde{\lambda}_n^2}{2\sigma_{\alpha}^2}  + o(1) \geq -\frac{z_0^2}{2\sigma_{\alpha}^2}  + o(1),
\end{split}
\]
which concludes the proof of the lower bound.
\end{proof}

\bibliographystyle{hsiam}
\bibliography{references_two_modes}
\end{document}